\newtheorem{theorem}{Theorem}
\newtheorem{lemma}[theorem]{Lemma}
\newtheorem{remark}[theorem]{Remark}
\newtheorem{corollary}[theorem]{Corollary}
\newtheorem{proposition}[theorem]{Proposition}
\newtheorem{example}[theorem]{Example}
\newtheorem{definition}[theorem]{Definition}
\numberwithin{equation}{section}
\newcommand{\tto}{\twoheadrightarrow}
\newcommand{\Hom}{{\mathrm{Hom}}}
\newcommand{\simp}{{\mathrm{sim}}}
\newcommand{\Nat}{\mathrm{Nat}}
\newcommand{\Mat}{{\mathrm{Mat}}}
\newcommand{\add}{{\mathrm{add}}}
\newcommand{\eins}{\leavevmode\hbox{\small1\kern-3.8pt\normalsize1}}
\newcommand{\op}{{\mathrm{op}}}
\newcommand{\res}{{\rm Res} }
\newcommand{\ind}{{\rm Ind} }
\newcommand{\coind}{{\rm Coind} }
\newcommand{\Ann}{{\rm Ann} }
\newcommand{\mR}{\mathbb{R}}
\newcommand{\mC}{\mathbb{C}}
\newcommand{\mN}{\mathbb{N}}
\newcommand{\mZ}{\mathbb{Z}}
\newcommand{\mm}{\mathsf{m}}
\newcommand{\fa}{{\mathfrak a}}
\newcommand{\fb}{{\mathfrak b}}
\newcommand{\fg}{{\mathfrak g}}
\newcommand{\fh}{{\mathfrak h}}
\newcommand{\fk}{{\mathfrak k}}
\newcommand{\fl}{{\mathfrak l}}
\newcommand{\fn}{{\mathfrak n}}
\newcommand{\fp}{{\mathfrak p}}
\newcommand{\cF}{\mathcal{F}}
\newcommand{\cC}{\mathcal{C}}
\newcommand{\cO}{\mathcal{O}}
\newcommand{\cL}{\mathcal{L}}
\newcommand{\cB}{\mathcal{B}}
\newcommand{\oa}{\bar{0}}
\newcommand{\ob}{\bar{1}}
\newcommand{\End}{{\rm End}}
\newcommand{\pr}{{\rm Pr}}
\newcommand{\fs}{\mathfrak{s}}
\newcommand{\rad}{{\rm rad}}
\newcommand{\Id}{{\rm Id}}
\newcommand{\im}{{\rm im}}
\newcommand{\mk}{\Bbbk}
\newcommand{\wfg}{\widetilde{\mathfrak{g}}}
\newcommand{\gr}{{\rm gr}}
\newcommand{\GK}{\mathsf{GK}}
\newcommand{\BN}{\mathsf{e}}
\newcommand{\supp}{\mathrm{supp}}
  \newcommand{\soc}{\mathrm{soc}}
  \newcommand{\topp}{\mathrm{top}}
  \newcommand{\la}{{\lambda}}
\begin{document}

\title[Translated simple modules]{Translated simple modules for Lie algebras
and simple supermodules for Lie superalgebras}

\author{Chih-Whi Chen, Kevin Coulembier and Volodymyr Mazorchuk}
\date{}

\begin{abstract}
We prove that the tensor product of a simple and a finite dimensional 
$\mathfrak{sl}_n$-module has finite type socle. This is applied to 
reduce classification of simple $\mathfrak{q}(n)$-supermodules to
that of simple $\mathfrak{sl}_n$-modules. Rough structure of 
simple $\mathfrak{q}(n)$-supermodules, considered as 
$\mathfrak{sl}_n$-modules, is described in terms of the
combinatorics of category $\mathcal{O}$.
\end{abstract}

\maketitle

\noindent
\textbf{MSC 2010:} 17B10 17B55  

\noindent
\textbf{Keywords:} Lie algebra; simple module; tensor product; socle;
Lie superalgebra; simple supermodule; rough structure
\vspace{5mm}

\section{Introduction}\label{sec1}

For a finite dimensional Lie algebra $\fk$, we consider modules of the form $S\otimes E$, 
where~$S$ is a simple (but not necessarily finite dimensional) $\fk$-module and $E$ 
is a finite dimensional $\fk$-module. The module $S\otimes E$ is always noetherian and
it is natural to ask, see e.g. \cite[Section~1.3]{Kostant}, whether $S\otimes E$ is artinian. 
The latter is true, for example, if $\fk\cong \mathfrak{sl}_2(\mathbb{C})$.
However, in the general case the answer is negative, see \cite[Theorem~4.1]{Stafford}
for~$\fk=\mathfrak{sl}_2(\mC)\oplus\mathfrak{sl}_2(\mC)$. 

In the first half of the present paper, we investigate the following two questions.
\begin{itemize}
\item[Q1:] Does the quotient of $S\otimes E$ by its radical have finite length?
\item[Q2:] Is the socle of $S\otimes E$ is an essential submodule?
\end{itemize}
Note that, since the module $S\otimes E$ is noetherian, its socle must have finite length 
and its radical must be superfluous. We show that question Q1 has an affirmative answer for arbitrary Lie algebras
over arbitrary fields. We also show that Q2 has an affirmative answer 
for reductive Lie superalgebras over $\mC$ of type~$A$. For the latter we apply the theory of
projective functors of \cite{BG} and some specifics about Kazhdan-Lusztig combinatorics in type~$A$.

Our interest in Q2 stems from an application to representation
theory of Lie superalgebras, which occupies the second half of the paper. 
Let $\mathfrak{g}$ be a finite dimensional Lie superalgebra
and $\mathfrak{g}_{\bar{0}}$ be the Lie algebra forming the even part of $\mathfrak{g}$.
A basic problem in the representation theory of $\mathfrak{g}$ is classification of 
simple $\mathfrak{g}$-supermodules. This problem is, most probably, too difficult in the
general case. However, a natural variant of this problem is reduction to classification
of simple $\mathfrak{g}_{\bar{0}}$-modules. In \cite{CM} it was shown that for type~I
Lie superalgebras there is a natural bijection between simple $\mathfrak{g}$-supermodules
and simple $\mathfrak{g}_{\bar{0}}$-supermodules (i.e. pairs consisting of a simple 
$\mathfrak{g}_{\bar{0}}$-module and an element in $\{0,1\}$). Such a nice result seems unrealistic outside type~I.
However, in the present paper we provide a weaker result (see Theorem~\ref{thmclass1}) 
for all finite dimensional classical complex Lie superalgebras for which 
$\mathfrak{g}_{\bar{0}}$ has type~$A$. The crucial ingredient in the proof is the fact that any 
$\mathfrak{g}$-supermodule is a quotient of an induced $\mathfrak{g}_{\bar{0}}$-module. 
And this latter fact follows from the fact that, in case
$\mathfrak{g}_{\bar{0}}$ has type~$A$, question Q2 has an affirmative answer
(see Proposition~\ref{wow} in the first half of the paper).

The paper is organized as follows: Section~\ref{sec2} collects all necessary preliminaries for the
first half of the paper.
Section~\ref{sec3} studies socles and radicals in the biggest possible generality.
Section~\ref{sec4} concentrates on similar questions for semisimple Lie algebras over $\mC$.
Section~\ref{sec5} deals with the very specific case of a sum of copies of $\mathfrak{sl}_2$.
In Section~\ref{sec6} we collected necessary preliminaries about Lie superalgebras.
Section~\ref{sec7} is devoted to classification of simple supermodules for Lie superalgebras
with type~$A$ even part and to the study of rough structure of such supermodules.
Finally, Section~\ref{sec8} describes behavior of Kac induction functor, with respect to socles and radicals,
for Lie superalgebras of type I.

{\bf Acknowledgements.}
The first author is partially supported by Vergstiftelsen. 
The second author is supported by the Australian Research Council.
The third author is partially
supported by the Swedish Research Council and the G{\"o}ran Gustafsson Foundation.

\section{Preliminaries on socles and radicals}\label{sec2}

We denote by $\mathbb{N}$ the set of all non-negative integers and by 
$\mathbb{Z}_{>0}$ the set of all positive integers.

\subsection{Socle, radical and multiplicities}\label{SecSocRad}

Let $R$ be a unital ring and $M$ a left $R$-module. A submodule $N\subset M$ 
is {\em essential} if $K\cap N=0$ implies $K=0$, for all submodules $K\subset M$. 
A submodule $N\subset M$ is {\em superfluous} if $K+ N=M$ implies $K=M$, 
for all submodules $K\subset M$. 

The {\em socle} $\soc(M)$ is defined as the sum of all simple submodules of~$M$,
in case~$M$ has a simple submodule, and as zero otherwise. 
Equivalently, $\soc(M)$ is the maximal semisimple submodule of~$M$. 

\begin{definition}\label{DefSoc}
We will say that an $R$-module~$M$ has {\em finite type socle} if
\begin{enumerate}[$($a$)$]
\item $\soc(M)$ has finite length (and hence is a finite direct sum of simple modules);
\item $\soc(M)$ is an essential submodule of~$M$.
\end{enumerate}
\end{definition}

\begin{remark}\label{RemSoc}${}$
{\rm
\begin{enumerate}[$($i$)$]
\item\label{RemSoc.1} Sometimes modules with finite type socle are 
called {\em finitely cogenerated modules}.  We do not use this terminology 
due to lack of symmetry with finitely generated modules, see Remark~\ref{RemRad}\eqref{RemRad.1}.
\item\label{RemSoc.2} If $\soc(M)$ has finite length, then $M$ has 
finite type socle if and only if all non-zero submodules of~$M$ have non-zero socle.
\item\label{RemSoc.3} If $M$ is noetherian, then $\soc(M)$ has finite length.
\end{enumerate}
}
\end{remark}

The {\em radical} $\rad(M)$ is the intersection of all maximal submodules of $M$.
By convention, we have $\rad(M)=M$ in case~$M$ has no maximal submodules.

\begin{remark}\label{remsocle}
{\em 
Equivalently, $\rad(M)$ is the sum of all superfluous submodules. Indeed,
any superfluous submodule is, clearly, contained in any maximal submodule.
Now, assume that there is $x\in\rad(M)$ which is not contained in the 
sum of all superfluous submodules. Then $Rx$
is not superfluous and hence $Rx+N=M$, for some proper submodule $N$.
By Zorn's lemma, without loss of generality, we may assume that $N$ is maximal (with respect to inclusions) 
among all submodules of $M$ that do not contain $x$. As $x\not\in N$
and $Rx+N=M$, we obtain that $N$ is, in fact, a maximal submodule of $M$. 
This now leads to a contradiction with $x\in\rad(M)\subset N$.
}
\end{remark}

\begin{definition}\label{DefRad}
We will say that an $R$-module~$M$ has {\em finite type radical} if
\begin{enumerate}[$($a$)$]
\item $M/\rad(M)$ has finite length;
\item $\rad(M)$ is a superfluous submodule of~$M$.
\end{enumerate}
\end{definition}

\begin{remark}\label{RemRad}${}${\rm
\begin{enumerate}[$($i$)$]
\item\label{RemRad.1} If $M$ has finite type radical, it follows easily that $M$ is finitely generated. In general, being finitely generated (or even noetherian) is not a sufficient 
condition for $M$ to have finite type radical, see Example~\ref{ExCx}.
\item\label{RemRad.2} If $M/\rad(M)$ has finite length and $M$ is finitely generated, 
then $M$ has finite type radical. This can be quickly reduced to the observation that any finitely generated module $N$ satisfies $\rad(N)\not=N$. Note that it is easy to construct examples of modules that are
not finitely generated but whose quotient over their radical has finite length.
For example, take the direct sum of a simple and an indecomposable injective
module over the polynomial ring in one variable.
\item\label{RemRad.3} If $M$ is noetherian, then $\rad(M)$ is superfluous. This follows easily from Remark~\ref{remsocle}.
\item\label{RemRad.4} If $M/\rad(M)$ has finite length, then $M/\rad(M)$ is semisimple. 
This follows from the canonical morphism $\displaystyle M/\rad(M)\hookrightarrow \Pi_{\mm}M/\mm$, 
where $\mm$ ranges over all maximal submodules of $M$.
In this case we write $\topp(M):=M/\rad(M)$. If $M/\rad(M)$ has infinite length, then
$M/\rad(M)$ does not have to be semisimple, see Example~\ref{ExCx}.
\end{enumerate}
}\end{remark}

\begin{example}\label{ExCx}
{\rm
Consider $R=\mC[x]$, with $M=R$ being the (finitely generated) left regular $R$-module. 
For $\lambda\in\mathbb{C}$, let $(x-\lambda)$ denote the ideal in $\mC[x]$ 
generated by the element $x-\lambda$. Then we have 
$$\rad(M)=\bigcap_{\lambda\in\mC}(x-\lambda)=0.$$
Since~$M/\rad(M)$ does not have finite length, $M$ does not have finite type radical.
}
\end{example}

For an $R$-module~$M$, consider a filtration $F_\bullet M$ of $M$ of length $p\in\mN$:
$$0=F_pM\subset\cdots \subset F_{i+1}M\subset F_iM\subset\cdots\subset F_0M=M.$$
For a short exact sequence 
\begin{equation}\label{sesMMM}
0\to M_1\to M\to M_2\to 0,
\end{equation}
we can define $F_iM_1=M_1\cap F_iM$ and $F_iM_2=(F_iM+M_1)/M_1$. These yield filtrations $F_\bullet M_1$ and $F_\bullet M_2$ of~$M_1$ and $M_2$, and short exact sequences
$$0\to F_i M_1\to F_i M\to F_iM_2\to 0,\quad\mbox{for all~$0\le i\le p$.}$$ 
For a simple $R$-module~$L$, the {\em multiplicity} $[M:L]\in\mN\cup\{\infty\}$ of~$L$ in~$M$ is 
$$[M:L]\;=\;\sup_{F_\bullet}\vert\{i\,|\,F_iM/F_{i+1}M\cong L\}\vert,$$
where~$F_\bullet$ ranges over all {\em finite} filtrations of~$M$. By the above, for a short exact sequence~\eqref{sesMMM}, we have
$$[M:L]\,=\,[M_1:L]+[M_2:L].$$

\begin{lemma}\label{LemSimpTop}
Assume that $R$ is an algebra over $\mC$ which is (at most) countably generated. If an $R$-module $M$ has finite type radical and simple top $S$, then
$$\dim_{\mC}\Hom_R(M,N)\;\le\;[N:S],$$
for all $R$-modules $N$.
\end{lemma}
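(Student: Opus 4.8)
The plan is to induct on the multiplicity $k:=[N:S]\in\mathbb{N}$, the case $k=\infty$ being vacuous since then the asserted inequality is automatic. Two preliminary facts are needed. First, a Schur/Dixmier-type observation: since $R$ is at most countably generated over $\mathbb{C}$ it has at most countable dimension over $\mathbb{C}$, hence every simple $R$-module (being cyclic) has at most countable dimension, so $\End_R(S)$ is a division algebra of at most countable dimension over the uncountable algebraically closed field $\mathbb{C}$; the standard partial-fractions argument then forces $\End_R(S)=\mathbb{C}$. As $M$ has simple top $S$, every $R$-homomorphism $M\to S$ kills $\rad(M)$ and so factors through $M/\rad(M)\cong S$, giving $\dim_{\mathbb{C}}\Hom_R(M,S)=\dim_{\mathbb{C}}\End_R(S)=1$. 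Second, I would record that every nonzero quotient $M'=M/K$ of $M$ again has simple top $S$ with superfluous radical: since $\rad(M)$ is superfluous and $K\neq M$, we get $K+\rad(M)\neq M$, and since $M/\rad(M)$ is simple this forces $K\subseteq\rad(M)$; hence $\rad(M')=\rad(M)/K$, $M'/\rad(M')\cong S$, and the image $\rad(M)/K$ of a superfluous submodule is superfluous.

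For the base case $k=0$: if some $f\colon M\to N$ were nonzero, then $f(M)$ would be a nonzero quotient of $M$, and the two-step filtration $0\subset\rad(f(M))\subset f(M)$ with top quotient $S$ shows $[f(M):S]\ge 1$, contradicting $[N:S]=0$; thus $\Hom_R(M,N)=0$. For the inductive step, assume the inequality holds, for our fixed $M$, for every module of strictly smaller $S$-multiplicity, and let $[N:S]=k\ge 1$. If $\Hom_R(M,N)=0$ there is nothing to prove; otherwise choose a nonzero $f_0\colon M\to N$ and set $N_1:=f_0(M)\subseteq N$.

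Now I would bound the two relevant $\Hom$-spaces. Applying left exactness of $\Hom_R(M,-)$ to $0\to\rad(N_1)\to N_1\to S\to 0$ gives $\dim_{\mathbb{C}}\Hom_R(M,N_1)\le\dim_{\mathbb{C}}\Hom_R(M,\rad(N_1))+\dim_{\mathbb{C}}\Hom_R(M,S)$; by additivity of multiplicities $[\rad(N_1):S]=[N_1:S]-1<k$ (all these multiplicities being finite, as $[N_1:S]\le[N:S]=k$), so the induction hypothesis together with $\dim_{\mathbb{C}}\Hom_R(M,S)=1$ yields $\dim_{\mathbb{C}}\Hom_R(M,N_1)\le[N_1:S]$. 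Likewise $[N/N_1:S]=k-[N_1:S]<k$, so the induction hypothesis gives $\dim_{\mathbb{C}}\Hom_R(M,N/N_1)\le k-[N_1:S]$. Finally, left exactness applied to $0\to N_1\to N\to N/N_1\to 0$ gives $\dim_{\mathbb{C}}\Hom_R(M,N)\le\dim_{\mathbb{C}}\Hom_R(M,N_1)+\dim_{\mathbb{C}}\Hom_R(M,N/N_1)\le[N_1:S]+(k-[N_1:S])=k$, closing the induction.

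The only non-formal ingredient is the identification $\End_R(S)=\mathbb{C}$, which is precisely where the countable-generation hypothesis is used; everything else is bookkeeping with left exactness of $\Hom_R(M,-)$, additivity of composition multiplicities, and the fact that nonzero quotients of $M$ inherit "simple top $S$ with superfluous radical." I expect the main point to be careful about is running the induction on $[N:S]$ rather than on length, so that it legitimately applies to the possibly ill-behaved modules $\rad(N_1)$ and $N/N_1$, and checking that every multiplicity one subtracts is finite.
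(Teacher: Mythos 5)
Your proof is correct, and it takes a genuinely different route from the paper's. Both proofs start from the same two ingredients: $\dim_{\mC}\Hom_R(M,S)=1$ via Dixmier--Schur (using the countable-generation hypothesis), and the vanishing $\Hom_R(M,N)=0$ when $[N:S]=0$ (since every nonzero quotient of $M$ has top $S$, forcing $[\im(\alpha):S]\ge 1$ for any nonzero $\alpha\colon M\to N$). The difference is in how the general bound is assembled. The paper picks a single finite filtration $F_\bullet N$ in which every subquotient $Q_i$ is either isomorphic to $S$ or satisfies $[Q_i:S]=0$, and then applies left exactness of $\Hom_R(M,-)$ once; note that the existence of such a filtration, while described as immediate "by definition," actually requires the small refinement argument that a maximizing filtration cannot have a non-$S$ subquotient with positive $S$-multiplicity. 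You instead induct on $k=[N:S]$, splitting $N$ via the image $N_1=f_0(M)$ of a nonzero morphism, and further splitting $N_1$ via $\rad(N_1)$ (using that quotients of $M$ inherit "simple top $S$ with superfluous radical," so $[\rad(N_1):S]=[N_1:S]-1$). Your route sidesteps the good-filtration observation entirely, at the cost of the induction bookkeeping; the paper's is shorter once the filtration fact is granted. Both are valid and roughly the same length, and your warning about running the induction on $[N:S]$ rather than on length is exactly the right thing to flag.
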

\begin{proof}
Since $M/\rad (M)$ is simple and $\rad(M)$ superfluous, it follows that every proper submodule of $M$ is contained in $\rad(M)$.
Consequently, we have  the inequality $[M/K:S]\ge 1$, for each proper submodule $K$. It also follows from the above (using Dixmier's version of Schur's lemma) that we have
$$\dim\Hom_R(M,S)=1.$$
Furthermore, since a non-zero morphism $\alpha:M\to N$ yields a submodule $\im(\alpha)$ of~$N$ with $[\im(\alpha):S]\ge 1$, we find
that $[N:S]=0$ implies $\Hom_R(M,N)=0$.

Now assume that $n:=[N:S]<\infty$. By definition, $N$ admits a finite filtration $F_\bullet N$, such that each module $Q_i:=F_iN/F_{i-1}N$ satisfies $[Q_i:S]=0$ or $Q_i\cong S$. The latter option occurs exactly $n$ times. By left exactness of the Hom functor and the above paragraph we thus have
$$\dim\Hom_R(M,N)\,\le\,\sum_i \dim\Hom_R(M,Q_i)\,=\, n,$$
which concludes the proof.
\end{proof}


\subsection{Lie algebras} \label{sec2.2}

Let $\mk$ be a field and $\fk$ a finite dimensional Lie algebra over~$\mk$. 
The universal enveloping algebra of~$\fk$ will be denoted by $U:=U(\fk)$ and the center
of $U$ by $Z=Z(\fk)$. Denote by $\Theta$ the set of central 
characters $\chi:Z(\fk)\to\mC$. We set $\mm_\chi=\ker\chi$, for all~$\chi\in\Theta$.

We denote by $U$-mod the abelian category of all finitely generated left $U$-modules.
For $\chi\in \Theta$, denote by $U\mbox{-mod}_{\chi}$ the full subcategory of~$U$-mod 
consisting of all modules on which the action of~$\mm_\chi$ is locally nilpotent. Set
$$U\mbox{-mod}_Z\;=\;\bigoplus_{\chi\in\Theta}U\mbox{-mod}_{\chi}.$$
For each $\chi\in\Theta$, we denote by $\pr_\chi$ the projection from $U\mbox{-mod}_Z$ 
to $U\mbox{-mod}_{\chi}$ with respect to the decomposition above.

Let $\cF$ denote the category of all finite dimensional $U$-modules. 
We denote the duality $\Hom_{\mk}(-,\mk)$ on $\cF$ by $\ast$, where the left module structure 
is obtained using the anti-automorphism of~$U$ given by $X\mapsto -X$, for $X\in\fk$. 

For a finite dimensional module~$E$, we have the corresponding exact endofunctor $F_{E}=-\otimes E$ of~$U$-mod. The functor $F_E$ is left and right adjoint to~$F_{E^\ast}$, see e.g.~\cite[\S 2.1(d)]{BG}.
In case of a semisimple algebra $\mathfrak{g}$, the direct summands of the restriction to~$U\mbox{-mod}_Z$ of the functors $F_E$ are 
known as {\em projective functors}, see \cite{BG}.

For any $M\in U$-mod, we denote by $\Ann(M)$ the two-sided ideal in $U$ that consists of all elements 
which annihilate every vector in $M$. The arguments in \cite[Kapitel~5]{Jantzen} show that, for any $M_1,M_2\in U$-mod, $E\in \cF$ and $\chi\in\Theta$, we have
\begin{equation}
\label{eqJan}
\Ann(M_1)\subset \Ann(M_2)\;\,\Rightarrow\;\, \Ann\big(\pr_\chi(M_1\otimes E)\big)
\subset \Ann\big(\pr_\chi(M_2\otimes E)\big).
\end{equation}

\subsection{Gelfand-Kirillov dimension and Bernstein number}\label{sec2.3} 

We fix an arbitrary field~$\mk$ and a finite dimensional Lie algebra $\fk$ over~$\mk$. 
Consider the filtration 
$$U=\bigcup_{n\in\mN}U_n,$$ 
where $U_n$ is spanned, as a vector space, by all products of~$n$ 
or fewer elements of~$\fk$. By the PBW theorem, the associated graded algebra $\gr(U)$ is isomorphic to the
the symmetric algebra $S(\fk)$. This allows us to use an alternative definition of 
Gelfand-Kirillov dimension, see~\cite[Section~7]{KrLe}.

For a (non-zero) finitely generated left $U$-module~$M$, with generating subspace~$M_0$, 
set $M_n=U_nM_0$, for all~$n\in\mN$. There exists $n_0\in\mN$, such that we have 
$d\in\mN$ and $\{a_i\in\mZ,0\le i\le d\}$, with $a_d\not=0$, for which
$$\dim_{\mk}M_n\;=\;\sum_{i=0}^d a_i \binom{n}{i},\quad\mbox{for all~$n\ge n_0$}.$$
The right-hand side is the {\em Hilbert-Samuel polynomial} in $n$. The degree of this 
polynomial is the {\em Gelfand-Kirillov dimension} $\GK(M):= d\in\mN$ and the 
leading coefficient is the {\em Bernstein number} $\BN(M):=a_d\in\mZ_{>0}$. 
These two numbers do not depend on the choice of~$M_0$.

By \cite[Lemma~8.8]{Jantzen}, for any $M\in U$-mod and $E\in\cF$, we have
\begin{equation}\label{eqJan-2}
\GK(M\otimes E)\;=\;\GK(M),\quad\mbox{and}\quad \BN(M\otimes E)=\dim_{\mk}(E)\,\BN(M).
\end{equation}
The following statement can be found in \cite[Theorem~7.7]{KrLe}.

\begin{lemma}\label{LemGK}
Let $0\to M_1\to M\to M_2\to 0$ be a short exact sequence in $U$-mod.
\begin{enumerate}[$($i$)$]
\item\label{LemGK.1} We have $\GK(M)=\max\{\GK(M_1),\GK(M_2)\}$.
\item\label{LemGK.2} If $\GK(M_1)=\GK(M)=\GK(M_2)$, then we have $\BN(M)\;=\;\BN(M_1)+\BN(M_2).$
\end{enumerate}
\end{lemma}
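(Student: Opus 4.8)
The plan is to deduce both assertions from a single computation with Hilbert--Samuel polynomials carried out along mutually compatible filtrations. First I would fix a finite-dimensional generating subspace $M_0$ of $M$, set $M_n=U_nM_0$, and transport this to the two ends of the sequence: for the quotient $M_2$ take the good filtration $\ol{M}_n$ given by the image of $M_n$ (equivalently $U_n$ applied to the image of $M_0$), and for the submodule $M_1$ take the induced filtration $F_nM_1:=M_1\cap M_n$. For every $n$ these fit into a short exact sequence of vector spaces $0\to F_nM_1\to M_n\to\ol{M}_n\to 0$, and, passing to associated graded modules over $\gr(U)\cong S(\fk)$, into a short exact sequence
\[
0\to\gr F_\bullet M_1\to\gr M\to\gr \ol{M}_\bullet\to 0
\]
of graded $S(\fk)$-modules. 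In particular $\dim_{\mk}M_n=\dim_{\mk}F_nM_1+\dim_{\mk}\ol{M}_n$ for all $n$.

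Next I would identify the three dimension functions with honest polynomials for $n\gg 0$. Since $\ol{M}_\bullet$ is generated in degree zero, $\dim_{\mk}\ol{M}_n$ is eventually the Hilbert--Samuel polynomial of $M_2$, of degree $\GK(M_2)$ and leading coefficient $\BN(M_2)$. For $M_1$ the point to be checked is that $F_\bullet M_1$ is not obviously generated in degree zero; here I would invoke noetherianity of $S(\fk)$ (so that the graded submodule $\gr F_\bullet M_1$ of the finitely generated module $\gr M$ is again finitely generated, making $\dim_{\mk}F_nM_1$ eventually polynomial) together with the Artin--Rees lemma, which yields $c\in\mN$ with $F_nM_1=U_{n-c}F_cM_1$ for $n\ge c$, and hence shows that $F_\bullet M_1$ is comparable, up to a bounded shift of index, to a standard good filtration $U_\bullet V$ of $M_1$. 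The usual sandwiching $\dim_{\mk}U_{n-c}V\le\dim_{\mk}F_nM_1\le\dim_{\mk}U_{n+c}V$ then forces $\dim_{\mk}F_nM_1$ to be eventually a polynomial of degree $\GK(M_1)$ with leading coefficient $\BN(M_1)$.

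Finally I would read off both claims from $\dim_{\mk}M_n=\dim_{\mk}F_nM_1+\dim_{\mk}\ol{M}_n$: writing all three sides in the basis $\binom{n}{i}$, the coefficients add. Since Bernstein numbers are strictly positive, the top-degree terms cannot cancel, so the degree of the sum is the maximum of the two degrees, giving $\GK(M)=\max\{\GK(M_1),\GK(M_2)\}$, i.e. \ref{LemGK.1}; and when $\GK(M_1)=\GK(M)=\GK(M_2)=:d$, comparing the coefficients of $\binom{n}{d}$ gives $\BN(M)=\BN(M_1)+\BN(M_2)$, i.e. \ref{LemGK.2}.

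The step I expect to be the main obstacle is the middle one: verifying that the filtration induced on the submodule $M_1$ still computes $\GK(M_1)$ and $\BN(M_1)$ correctly. Everything else is formal manipulation of filtered modules and Hilbert polynomials, but this point genuinely needs noetherianity of $S(\fk)$ and the Artin--Rees lemma (alternatively one simply invokes \cite[Theorem~7.7]{KrLe}, where exactly this is carried out).
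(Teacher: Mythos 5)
Your argument is correct and is essentially the standard proof of \cite[Theorem~7.7]{KrLe}, which is exactly what the paper cites for this lemma (the paper gives no proof of its own). The key point you correctly isolate --- that the induced filtration $M_1\cap M_n$ is a good filtration because $\gr F_\bullet M_1$ is a graded submodule of the finitely generated $S(\fk)$-module $\gr M$, hence equivalent up to a bounded index shift to a standard filtration and therefore computes $\GK(M_1)$ and $\BN(M_1)$ --- is precisely the content of that reference, and the rest (additivity of the dimension functions and positivity of leading coefficients) is routine.
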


\begin{definition}
A finite filtration $F_\bullet M$ of~$M\in U\mbox{{\rm-mod}}$ is {\em GK-complete} if 
$$\GK(F_iM/F_{i+1}M)=\GK(M)\quad\;\mbox{ implies $\;F_iM/F_{i+1}M$ is simple,\hspace{2mm} for all~$i$.}$$
\end{definition}

By the discussion at the end of Section~\ref{SecSocRad}, we have the following observation.

\begin{lemma}\label{LemGKcom}
Assume that $M\in U\mbox{{\rm-mod}}$ admits a GK-complete filtration.
If $N$ is a subquotient of~$M$ with the same Gelfand-Kirillov dimension, 
then $N$ admits a GK-complete filtration as well.
\end{lemma}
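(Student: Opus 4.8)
The plan is to transfer a GK-complete filtration along each of the structural steps that turn $M$ into its subquotient $N$, using that Lemma~\ref{LemGK} controls how Gelfand-Kirillov dimension and Bernstein number behave in short exact sequences. First I would reduce to two separate cases: $N$ a submodule of $M$, and $N$ a quotient of $M$; the general subquotient case follows by composing these (if $N = M'/M''$ with $M'' \subset M' \subset M$, first pass to the submodule $M'$, then to the quotient $M'/M''$, noting that along the way Lemma~\ref{LemGK}\eqref{LemGK.1} forces every intermediate module to again have Gelfand-Kirillov dimension equal to $\GK(M)$, since $\GK(N) = \GK(M)$ squeezes everything in between).

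For the case of a submodule $N \subset M$, I would take a given GK-complete filtration $F_\bullet M$ and intersect it with $N$, setting $F_i N = N \cap F_i M$, exactly as in the general filtration construction recalled before~\eqref{sesMMM}. Each subquotient $F_i N / F_{i+1} N$ embeds into $F_i M / F_{i+1} M$. By Lemma~\ref{LemGK}\eqref{LemGK.1} applied to the inclusion $F_i N / F_{i+1} N \hookrightarrow F_i M / F_{i+1} M$, if $\GK(F_i N / F_{i+1} N) = \GK(M)$ then necessarily $\GK(F_i M / F_{i+1} M) = \GK(M)$ as well (it cannot be smaller), so by GK-completeness of $F_\bullet M$ the module $F_i M / F_{i+1} M$ is simple; hence its nonzero submodule $F_i N / F_{i+1} N$ equals it and is simple. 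Thus $F_\bullet N$ is GK-complete. For the case of a quotient $q \colon M \tto N$, I would likewise push the filtration forward, $F_i N = q(F_i M) = (F_i M + \ker q)/\ker q$, so that $F_i N / F_{i+1} N$ is a quotient of $F_i M / F_{i+1} M$; the identical argument via Lemma~\ref{LemGK}\eqref{LemGK.1} (a quotient also cannot increase Gelfand-Kirillov dimension) shows that whenever $\GK(F_i N / F_{i+1} N) = \GK(M)$, the module $F_i M / F_{i+1} M$ is simple, hence so is its quotient $F_i N / F_{i+1} N$.

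Finally I would remark that the hypothesis $\GK(N) = \GK(M)$ is used only to guarantee that the composition of the submodule step and the quotient step stays within the regime $\GK(\,\cdot\,) = \GK(M)$ at the intermediate stage, so that GK-completeness is not vacuously lost; within each individual step no such hypothesis is needed. The main (and only) subtlety is precisely this monotonicity bookkeeping: one must check at each stage that a module sitting between $N$ and $M$ in the relevant short exact sequence has Gelfand-Kirillov dimension forced to equal $\GK(M)$, which is immediate from Lemma~\ref{LemGK}\eqref{LemGK.1} together with the squeeze $\GK(M) = \GK(N) \le \GK(\text{intermediate}) \le \GK(M)$. There is no computational obstacle; the proof is a short diagram chase once the filtrations are set up as above.
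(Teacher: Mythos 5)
Your proof is correct and takes essentially the same approach as the paper, which gives no separate argument but simply refers to the filtration transfer construction at the end of Section~\ref{SecSocRad} (intersecting $F_\bullet M$ with a submodule, pushing it forward along a quotient) that you spell out in full. In particular your monotonicity bookkeeping — using Lemma~\ref{LemGK}\eqref{LemGK.1} to squeeze $\GK(F_iM/F_{i+1}M)$ to $\GK(M)$ whenever the corresponding subquotient of $N$ attains $\GK(N)=\GK(M)$ — is exactly the observation the paper is implicitly invoking.
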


\subsection{Triangular decomposition}\label{sec2.4} 

In this section we assume that $\fg$ is a semisimple Lie algebra over~$\mC$. 
In order to recall the classification of projective functors for $\fg$ from \cite{BG}, 
it is convenient to choose a triangular decomposition
$$\fg=\fn^-\oplus\fh\oplus\fn^+$$
of $\fg$ with $\fh$ a Cartan subalgebra and $\fb=\fh\oplus \fn^+$ a Borel subalgebra.
Denote by $\Phi\subset\fh^\ast$ the set of roots of~$\fg$ with respect to~$\fh$. We have 
$\Phi=\Phi^+\sqcup \Phi^-$, with $\Phi^+=-\Phi^-$ being the roots of $\fn^+$. 
For each root $\alpha\in \Phi$, we have the coroot $h_\alpha\in\fh$. 

Consider the Weyl group $W=W(\fg:\fh)$ with its defining action on $\fh^\ast$.  
The group $W$ is generated by $s_\alpha$, 
where $\alpha\in\Phi$, and the action of these generators on $\fh^\ast$ is given by
$s_\alpha(\lambda)=\lambda-\lambda(h_\alpha)\alpha$, where  $\lambda\in \fh^\ast$. 
For each $\lambda\in\fh^\ast$, we denote the stabilizer of $\lambda$ in $W$ 
by $W_\lambda:=\{w\in W\,|\,w(\lambda)=\lambda\}$.  
We have the set of integral weights
$$\Lambda\;:=\;\{\lambda\in\fh^\ast\,\,|\,\, \lambda(h_\alpha)\in\mZ,\;\mbox{ for all~$\alpha\in\Phi$}\}.$$ 
For each $E\in\cF$, we denote by $\supp(E)\subset\Lambda$ the {\em support} of $E$, that is the set of 
all $\fh$-weights of $E$. Then we have 
\begin{displaymath}
\Lambda=\bigcup_{E\in\cF} \supp(E).
\end{displaymath}
For a coset $\Gamma\in\fh^\ast/\Lambda$, we have the 
{\em integral Weyl group} $W^\Gamma\subset W$, which is generated by all reflections 
corresponding to $\alpha\in\Phi$ for which $\lambda(h_\alpha)\in\mZ$, where $\lambda\in\Gamma$.

We have the Harish-Chandra isomorphism $\eta^\ast:Z(\fg)\stackrel{\sim}{\to}S(\fh)^W$ 
and an epimorphism
$$\eta: \fh^\ast\tto \Theta, \quad\mbox{where $\eta(\lambda)({}_-)=\eta^\ast({}_-)(\lambda)$, 
for all~$\lambda\in\fh^\ast$.}$$
The fibers of~$\eta$ are precisely the Weyl group orbits in $\fh^\ast$. 
We call a central character $\chi\in\Theta$ {\em regular} if the set $\eta^{-1}(\chi)$ 
has size $|W|$. We call~$\chi$ {\em integral} if $\eta^{-1}(\chi)\subset\Lambda$.

We introduce the partial order $\le$ on $\fh^\ast$ which is generated by $\mu\le\lambda$, 
if $\mu=s_\alpha\lambda$, for some $\alpha\in\Phi^+$ with $\lambda(h_\alpha)\in\mathbb{Z}_{\geq 0}$.
Hence, $\lambda\in\fh^\ast$ is dominant (maximal) if $\lambda(h_\alpha)\not\in\mZ_{<0}$, 
for all~$\alpha\in\Phi^+$. 

Associated to the triangular decomposition above, we have the BGG category $\cO$
defined as the full subcategory of~$U$-mod consisting of all weight 
modules which are locally $U(\fn^+)$-finite, 
see \cite{BGG76, Hu08}.
We denote by $\rho$ the half of the sum of all elements of~$\Phi^+$. For $\lambda\in\fh^\ast$, 
we have the Verma module $\Delta_\lambda=U\otimes_{U(\fb)}\mC_{\lambda-\rho}$, induced 
from the one-dimensional module $\mC_{\lambda-\rho}$ of~$\fb$ on which $\fh$ acts 
through $\lambda-\rho$. The unique simple quotient of~$\Delta_\lambda$ is denoted 
by $L_\lambda$ and the projective cover of $L_\lambda$  in $\cO$ is denoted by $P_\lambda$.

\subsection{Projective functors for semisimple Lie algebras}\label{sec2.5}

We keep the notation and assumptions of the previous subsection.
Following \cite[Section~1.4]{BG}, we have the set 
$$\Xi^0\,=\,\{(\mu,\lambda)\,|\,\lambda-\mu\in \Lambda\}\;\subset\; \fh^\ast\times\fh^\ast.$$
We set $\Xi=\Xi^0/W$, for the diagonal action of $W$ on $\fh^\ast\times\fh^\ast$. 
Each class in $\Xi$ contains at least one 
$(\mu,\lambda)$, where $\lambda$ is dominant and $\mu$ is such that $\mu\le w\mu$, 
for all~$w\in W_\lambda$. We call such a pair a {\em proper representative}.
The following claim can be found in \cite[Theorem~3.3]{BG}.

\begin{lemma}\label{ClassBG}
Each projective functor on $U\mbox{\rm-mod}_Z$ decomposes into a finite direct sum 
of indecomposable projective functors. We have a bijection $\xi\mapsto F(\xi)$ 
between $\Xi$ and the set of isomorphism classes of indecomposable projective functors as follows. 
For each proper representative $(\mu,\lambda)$ of $\xi\in \Xi$, the functor 
$F^{\mu}_{\lambda}=F(\xi)$ satisfies
$$F^{\mu}_{\lambda}:\; U\mbox{{\rm -mod}}_{\eta(\lambda)}\;\to\; U\mbox{{\rm-mod}}_{\eta(\mu)},\quad \mbox{with }\;F^{\mu}_{\lambda}(\Delta_\lambda)\cong P_{\mu}.$$
\end{lemma}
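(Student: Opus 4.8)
The plan is to follow Bernstein--Gelfand and reduce the classification to the combinatorics of projective objects in category~$\cO$. First I would make the standard reductions. Since a finite dimensional $E$ has only finitely many $\fh$-weights, the restriction of $F_E$ to a single block $U\mbox{-mod}_\chi$ lands in a finite sum of blocks, and every projective functor respects the decomposition $U\mbox{-mod}_Z=\bigoplus_\chi U\mbox{-mod}_\chi$; so it suffices to decompose, and to classify up to isomorphism, the projective functors with a fixed source block $U\mbox{-mod}_{\eta(\lambda)}$, where all relevant multiplicities are finite. Acting by $W$ on a pair $(\mu,\lambda)$ changes neither block involved, so I would take $\lambda$ dominant, and --- postponing the singular case to the end, where it is handled by translating to and from a nearby regular dominant central character (a move that also matches up proper representatives on the two sides) --- assume in addition that $\lambda$ is regular.

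Next I would pass to category~$\cO$. A projective functor $F$ is exact and, being a direct summand of some $F_E$, admits an exact biadjoint (the matching summand of $F_{E^\ast}$); it also preserves $\cO$. Hence $F$ restricts to an exact functor $\cO_{\eta(\lambda)}\to\cO_{\eta(\mu)}$ that sends projectives to projectives, and since $\lambda$ is dominant the Verma module $\Delta_\lambda$ is projective in $\cO_{\eta(\lambda)}$, so $F\Delta_\lambda$ is a projective object of finite length with finite dimensional endomorphism ring. The crux is to show that evaluation at $\Delta_\lambda$ induces an isomorphism
\[
\Nat(F,G)\;\xrightarrow{\ \sim\ }\;\Hom_{U}(F\Delta_\lambda,G\Delta_\lambda)
\]
for all projective functors $F,G$ with source $U\mbox{-mod}_{\eta(\lambda)}$. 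Injectivity is the heart of the matter: I would identify natural transformations between the ambient functors $F_E$, $F_{E'}$ with homomorphisms of $U$-bimodules of the shape $U\otimes E\to U\otimes E'$, and then show that such a homomorphism is detected by the induced map $\Delta_\lambda\otimes E\to\Delta_\lambda\otimes E'$, via a suitable universal Verma module / base-change argument (equivalently, via the fact that the big projective of $\cO_{\eta(\lambda)}$ is obtained from $\Delta_\lambda$ by applying a projective functor, so that $\Delta_\lambda$ already detects morphisms between projective functors restricted to $\cO$). I expect this step to be the main obstacle; once it is available, everything else is formal.

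Combining the displayed isomorphism with the standard fact that every indecomposable projective object of $\cO_{\eta(\mu)}$, for $\mu-\lambda\in\Lambda$, occurs as a direct summand of $\theta\Delta_\lambda$ for some composition $\theta$ of translation functors, I would conclude that $F\mapsto F\Delta_\lambda$ is an equivalence from the category of projective functors with source $U\mbox{-mod}_{\eta(\lambda)}$ onto the additive category of projective objects of the blocks $\cO_{\eta(\mu)}$ with $\mu-\lambda\in\Lambda$.

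From this the statement reads off. Krull--Schmidt for projective functors with source $U\mbox{-mod}_{\eta(\lambda)}$ follows from finite dimensionality of $\Nat(F,F)\cong\End_U(F\Delta_\lambda)$, giving the finite direct sum decomposition into indecomposables, with $F$ indecomposable exactly when $F\Delta_\lambda$ is. Under the equivalence, indecomposable projective functors with source $U\mbox{-mod}_{\eta(\lambda)}$ correspond to the indecomposable projectives $P_\mu$ with $\mu-\lambda\in\Lambda$, that is, to the proper representatives $(\mu,\lambda)$ with this fixed $\lambda$; I would then define $F^\mu_\lambda$ to be the projective functor with $F^\mu_\lambda\Delta_\lambda\cong P_\mu$, which is well defined up to isomorphism by the equivalence and yields at once the bijection $\xi\mapsto F(\xi)$ and the normalization $F^\mu_\lambda(\Delta_\lambda)\cong P_\mu$. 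For singular dominant $\lambda$ one transports the whole picture along the translation functors of the first paragraph, and assembling over all source blocks recovers the statement for arbitrary projective functors on $U\mbox{-mod}_Z$.
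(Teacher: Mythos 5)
This lemma is not proved in the paper at all: it is quoted verbatim from Bernstein--Gelfand, \cite[Theorem~3.3]{BG}, so there is no in-paper argument to compare against. Your outline is a faithful reconstruction of the Bernstein--Gelfand strategy (reduce to a fixed dominant source block, restrict to category $\cO$ where $\Delta_\lambda$ is projective, show that evaluation at $\Delta_\lambda$ is fully faithful on projective functors, and match indecomposables with the $P_\mu$), and the formal deductions you draw from the evaluation isomorphism --- Krull--Schmidt from finite dimensionality of $\End$, essential surjectivity onto projectives via translations of the dominant Verma, and the resulting bijection with proper representatives --- are all correct.

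That said, as a proof the proposal has a real hole exactly where you flag it: the isomorphism $\Nat(F,G)\xrightarrow{\sim}\Hom_U(F\Delta_\lambda,G\Delta_\lambda)$ is the entire technical content of the classification (it is \cite[Theorem~3.5]{BG}, recorded separately in the paper as Lemma~\ref{LemBGNat}), and "a suitable universal Verma module / base-change argument" is a placeholder rather than an argument; Bernstein--Gelfand's proof of injectivity via the deformed Verma module over $S(\fh)$ and the structure of the bimodules $U\otimes E$ is genuinely delicate. Two smaller points to tighten: (1) the singular case cannot literally be "transported along" translation to and from the wall, since $\theta^{on}$ is not an equivalence; the reduction works, but via the identity $F^{\oplus|W_\mu|}\cong F\circ\theta^{on}\circ\theta^{out}$ together with the matching of proper representatives (only the $\mu$ with $\mu\le w\mu$ for $w\in W_\lambda$ occur, which is why the statement restricts to proper representatives); (2) you should say explicitly that full faithfulness is needed on the whole additive category of projective functors with the given source, not just for a single $F$, in order to lift idempotents and conclude that indecomposable summands of $F_E$ biject with indecomposable summands of $E\otimes\Delta_\lambda$.
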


Note that the above lemma implies that the decomposition into indecomposable projective functors is, in fact, 
unique, up to isomorphism. In case we work with non-integral central characters, we have several ways 
of denoting the same indecomposable projective functor. The first claim of the following lemma can be found
in \cite[Theorem~4.1]{BG} while the second claim can be found in \cite[\S 4.13]{Jantzen}.

\begin{lemma}\label{PropBG}
Consider $\lambda,\mu\in \fh^\ast$ dominant with $\lambda-\mu\in\Lambda$.
\begin{enumerate}[$($i$)$]
\item\label{LemBGequiv}
If $W_\lambda=W_\mu$, then we have an equivalence of categories
$$F^\mu_\lambda:\; U\mbox{{\rm -mod}}_{\eta(\lambda)}\;\stackrel{\sim}{\to}\; U\mbox{{\rm-mod}}_{\eta(\mu)}.$$ 
\item \label{LemWall} 
If $W_\lambda=\{e\}$, then we take the longest element $w_0^\mu$ of~$W_\mu$ and set $\lambda'=w_0^\mu\lambda$. 
The functors $(F^{\lambda'}_\mu,F^\mu_\lambda)$ are biadjoint, moreover, 
$F^\mu_\lambda F^{\lambda'}_\mu\cong \Id^{\oplus |W_\mu|}_{\eta(\mu)}$.
\end{enumerate}
\end{lemma}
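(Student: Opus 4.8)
The plan is to derive both statements from the classification of indecomposable projective functors (Lemma~\ref{ClassBG}), together with three standard facts: a composition of projective functors is again a projective functor (being a summand of $F_{E_1\otimes E_2}$); the functors $F_E$ and $F_{E^\ast}$ are biadjoint; and $\Delta_\nu\cong P_\nu$ whenever $\nu$ is dominant, since then $\Delta_\nu$ is projective in $\cO_{\eta(\nu)}$ with top $L_\nu$.

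For part \eqref{LemBGequiv}, the first step is to observe that both $(\mu,\lambda)$ and $(\lambda,\mu)$ are proper representatives: $\lambda$ and $\mu$ are dominant, and the condition ``$\mu\le w\mu$ for all $w\in W_\lambda$'' holds trivially because $W_\lambda=W_\mu$ fixes $\mu$ (and symmetrically). Hence $F^\mu_\lambda$ and $F^\lambda_\mu$ are both defined, and by Lemma~\ref{ClassBG} we have $F^\mu_\lambda(\Delta_\lambda)\cong P_\mu\cong\Delta_\mu$ and $F^\lambda_\mu(\Delta_\mu)\cong P_\lambda\cong\Delta_\lambda$. Then I would argue that $F^\lambda_\mu F^\mu_\lambda$ is a projective functor on $U\mbox{-mod}_{\eta(\lambda)}$, hence a finite direct sum of indecomposable projective functors $F(\xi_j)$; evaluating on $\Delta_\lambda$ gives $\bigoplus_j F(\xi_j)(\Delta_\lambda)\cong F^\lambda_\mu(P_\mu)\cong\Delta_\lambda\cong P_\lambda$. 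Since $P_\lambda$ is indecomposable while each $F(\xi_j)(\Delta_\lambda)$ is a nonzero indecomposable projective $P_{\mu_j}$, there is a single summand and $\mu_j=\lambda$; as the indecomposable projective functor with proper representative $(\lambda,\lambda)$ is $\Id_{\eta(\lambda)}$, we conclude $F^\lambda_\mu F^\mu_\lambda\cong\Id_{\eta(\lambda)}$, and symmetrically $F^\mu_\lambda F^\lambda_\mu\cong\Id_{\eta(\mu)}$, so $F^\mu_\lambda$ is an equivalence.

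For part \eqref{LemWall}, I would first check that $(\lambda',\mu)$ with $\lambda'=w_0^\mu\lambda$ is a proper representative: $\mu$ is dominant, and since $\lambda$ is regular dominant and $w_0^\mu$ is the longest element of $W_\mu$, the weight $\lambda'$ is the minimal element of the $W_\mu$-orbit of $\lambda$, i.e.\ $\lambda'\le w\lambda'$ for all $w\in W_\mu$; thus $F^{\lambda'}_\mu$ is defined and $F^{\lambda'}_\mu(\Delta_\mu)\cong P_{\lambda'}$. For biadjointness I would use that the left (resp.\ right) adjoint of an indecomposable projective functor is again an indecomposable projective functor — this follows because adjunction with $F_E\dashv F_{E^\ast}$ identifies endomorphism rings of functors, so it preserves the projective-functor property and locality of endomorphism rings — and that, since $F_E\dashv F_{E^\ast}\dashv F_E$, the left and right adjoints of $F^\mu_\lambda$ coincide. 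It then remains to identify this adjoint $G\colon U\mbox{-mod}_{\eta(\mu)}\to U\mbox{-mod}_{\eta(\lambda)}$ as $F^{\lambda'}_\mu$: writing $G\cong F(\xi^\ast)$ and computing $G(\Delta_\mu)$ through the combinatorics of translating the dominant projective Verma $\Delta_\mu$ out of the wall, one obtains $G(\Delta_\mu)\cong P_{\lambda'}$, which pins down $\xi^\ast$ and gives $G\cong F^{\lambda'}_\mu$.

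Finally, for the composition formula, $F^\mu_\lambda F^{\lambda'}_\mu$ is a projective functor on $U\mbox{-mod}_{\eta(\mu)}$, so it decomposes as $\bigoplus_j F(\xi_j)$. Evaluating on $\Delta_\mu\cong P_\mu$, I would use $F^{\lambda'}_\mu(\Delta_\mu)\cong P_{\lambda'}$ and the standard fact that $P_{\lambda'}$ has a Verma flag with subquotients exactly $\{\Delta_\nu\mid\nu\in W_\mu\lambda\}$, each appearing once (this is precisely the value of translation from the wall on the dominant Verma). Since $F^\mu_\lambda$ is exact and sends each such $\Delta_\nu$ to $\Delta_\mu$, the module $F^\mu_\lambda(P_{\lambda'})$ carries a filtration by $|W_\mu|$ copies of the projective $\Delta_\mu\cong P_\mu$, hence splits as $P_\mu^{\oplus|W_\mu|}$. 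Comparing with $\bigoplus_j F(\xi_j)(\Delta_\mu)\cong\bigoplus_j P_{\mu_j}$ and using uniqueness of the decomposition into indecomposables forces each $\mu_j=\mu$ and exactly $|W_\mu|$ summands; since the indecomposable projective functor on $U\mbox{-mod}_{\eta(\mu)}$ fixing $\Delta_\mu\cong P_\mu$ is $\Id_{\eta(\mu)}$, this yields $F^\mu_\lambda F^{\lambda'}_\mu\cong\Id^{\oplus|W_\mu|}_{\eta(\mu)}$. The main obstacle is the identification of the adjoint of $F^\mu_\lambda$ with $F^{\lambda'}_\mu$: this requires either invoking the explicit description in \cite{BG} of the involution on $\Xi$ dual to taking adjoints, or a careful tracking of which indecomposable summand of $F_{E^\ast}$ is biadjoint to the chosen summand $F^\mu_\lambda$ of $F_E$, using the $W$-invariance of $\supp(E)$ and the action of $w_0^\mu$; everything else is bookkeeping with Lemma~\ref{ClassBG} and evaluation on Verma modules.
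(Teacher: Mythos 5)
The paper does not prove this lemma at all: it is quoted with references, part \eqref{LemBGequiv} to \cite[Theorem~4.1]{BG} and part \eqref{LemWall} to \cite[\S 4.13]{Jantzen}. Your proposal is therefore necessarily a different ``route'': a reconstruction of those results from the classification in Lemma~\ref{ClassBG}. The reconstruction is sound in outline and is essentially how the cited sources argue. For part \eqref{LemBGequiv} your argument is complete: both pairs are proper representatives, $\Delta_\nu\cong P_\nu$ for $\nu$ dominant, and Krull--Schmidt applied to the value of the composite on the dominant Verma pins the composite down as the identity functor. The same evaluation-plus-Krull--Schmidt mechanism correctly yields $F^\mu_\lambda F^{\lambda'}_\mu\cong \Id^{\oplus|W_\mu|}_{\eta(\mu)}$ once the Verma flag of $P_{\lambda'}$ and the identity $F^\mu_\lambda\Delta_{w\lambda}\cong\Delta_\mu$ are granted.

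Two caveats. First, the step you yourself flag --- identifying the biadjoint $G$ of $F^\mu_\lambda$ with $F^{\lambda'}_\mu$ --- is only gestured at, and as phrased (``computing $G(\Delta_\mu)$ through the combinatorics of translating $\Delta_\mu$ out of the wall'') it is mildly circular, since it presumes $G$ is translation off the wall. The non-circular version is short: $G$ is an indecomposable projective functor (its endomorphism ring is anti-isomorphic to that of $F^\mu_\lambda$, hence local), so $G\Delta_\mu\cong P_\nu$ for some $\nu$, and adjunction gives $\Hom(G\Delta_\mu,L_{w\lambda})\cong\Hom(P_\mu,F^\mu_\lambda L_{w\lambda})$, which by the standard computation of translation to the wall on simples is nonzero exactly for $w=w_0^\mu$; hence $\nu=\lambda'$. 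Second, the ``standard facts'' you invoke (the Verma flag of $P_{\lambda'}$, the effect of wall-crossing on Verma and simple modules) are precisely the content of \cite[\S 4.12--4.13]{Jantzen}, so your argument does not really bypass the cited source; what it buys is an explicit demonstration that everything beyond those translation-functor computations follows formally from Lemma~\ref{ClassBG} and Krull--Schmidt.
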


Denote by $U\mbox{-mod}^0_{\chi}$ the full subcategory of $U\mbox{-mod}_{\chi}$ consisting 
of all modules $M$ with $\mm_\chi\subset \Ann(M)$. The following claim can be found in \cite[Theorem~3.5]{BG}.

\begin{lemma}\label{LemBGNat}
Let $\lambda\in\fh^\ast$ be dominant and $(\lambda,\mu_1)$ and $(\lambda,\mu_2)$ be proper representatives. 
Evaluation yields an isomorphism 
$$\Nat(F_\lambda^{\mu_1},F_\lambda^{\mu_2})\;\stackrel{\sim}{\to}\; \Hom_{\fg}(F_\lambda^{\mu_1}\Delta_\lambda,F_\lambda^{\mu_2}\Delta_\lambda), \quad\alpha\mapsto \alpha_{\Delta_\lambda},$$
where $\Nat(F_\lambda^{\mu_1},F_\lambda^{\mu_2})$ stands for the space of all natural transformations
from $F_\lambda^{\mu_1}$ to $F_\lambda^{\mu_2}$ as functors 
$U${\rm-mod}$_{\eta(\lambda)}^0\to U${\rm-mod}$_{\eta(\mu_i)}$.
\end{lemma}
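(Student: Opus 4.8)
The plan is to reduce everything to the already-classified case of indecomposable projective functors and then to the structure of Verma modules. First I would observe that both $F_\lambda^{\mu_1}$ and $F_\lambda^{\mu_2}$ are restrictions, along the projections $\pr_{\eta(\mu_i)}$, of functors of the form $F_E = -\otimes E$ for suitable $E \in \cF$; a natural transformation between them is therefore naturally a natural transformation between certain summands of $F_E$ and $F_{E'}$, and the relevant naturality/adjunction yoga is already packaged in \cite[\S 2.1]{BG}. The map $\alpha \mapsto \alpha_{\Delta_\lambda}$ is clearly linear and well-defined, so the whole content is bijectivity.

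For injectivity, the key point is that $\Delta_\lambda$ generates the category $U\mbox{-mod}_{\eta(\lambda)}^0$ under the operation of taking quotients and applying projective functors: every module in $U\mbox{-mod}_{\eta(\lambda)}^0$ is a quotient of a (possibly infinite) direct sum of modules of the form $G\Delta_\lambda$ where $G$ ranges over projective functors $U\mbox{-mod}_{\eta(\lambda)} \to U\mbox{-mod}_{\eta(\lambda)}$ — indeed $\Delta_\lambda$ is projective in $\cO_{\eta(\lambda)}$ when $\lambda$ is dominant, and $\pr_{\eta(\lambda)}(\Delta_\lambda \otimes E)$ exhausts (up to summands and quotients) the relevant projectives. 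Since a natural transformation $\alpha$ commutes with the action of projective functors and with passing to quotients, $\alpha_{\Delta_\lambda} = 0$ forces $\alpha_M = 0$ on every such $M$, hence $\alpha = 0$. The condition $\mm_\chi \subset \Ann(M)$ is exactly what is needed to restrict to the subcategory where $\Delta_\lambda$ (rather than thicker projectives) suffices as a generator.

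For surjectivity, I would first reduce to the case where $\lambda$ is regular. If $\lambda$ has nontrivial stabilizer, pick $\lambda_0$ dominant regular with $\lambda_0 - \lambda \in \Lambda$ and $W_{\lambda_0} = \{e\}$; by Lemma~\ref{PropBG}\eqref{LemWall} (translation onto and off a wall) we can write $F_\lambda^{\mu_i} \cong F_{\lambda_0}^{\mu_i} F_\lambda^{\lambda_0}$ up to the appropriate multiplicities, transporting a given $\fg$-homomorphism $F_\lambda^{\mu_1}\Delta_\lambda \to F_\lambda^{\mu_2}\Delta_\lambda$ to one between $F_{\lambda_0}^{\mu_1}\Delta_{\lambda_0}$ and $F_{\lambda_0}^{\mu_2}\Delta_{\lambda_0} = P_{\mu_2}$. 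So assume $\lambda$ regular, so $\Delta_\lambda = P_\lambda$ is a projective generator of $\cO_{\eta(\lambda)}$ and $F_\lambda^{\mu_i}$ restricted to $\cO_{\eta(\lambda)}$ is determined by a right $\End(\Delta_\lambda)$-module, namely $F_\lambda^{\mu_i}\Delta_\lambda$. A $\fg$-homomorphism $\phi: F_\lambda^{\mu_1}\Delta_\lambda \to F_\lambda^{\mu_2}\Delta_\lambda$ is automatically a morphism of right $\End(\Delta_\lambda)$-modules (the right action being induced by functoriality applied to endomorphisms of $\Delta_\lambda$), so $\phi$ induces a natural transformation between the restrictions of $F_\lambda^{\mu_1}$ and $F_\lambda^{\mu_2}$ to $\cO_{\eta(\lambda)}$; one then checks this extends uniquely to all of $U\mbox{-mod}_{\eta(\lambda)}^0$ using again that this category is generated by quotients of projective-functor images of $\Delta_\lambda$.

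The main obstacle I expect is the surjectivity step, specifically verifying that an abstract $\fg$-linear map $F_\lambda^{\mu_1}\Delta_\lambda \to F_\lambda^{\mu_2}\Delta_\lambda$ really is compatible with the full functorial structure — i.e. that it is a morphism of $(\End \Delta_\lambda)$-modules and that the resulting natural transformation on $\cO_{\eta(\lambda)}$ propagates to the larger category $U\mbox{-mod}_{\eta(\lambda)}^0$ (where modules need not be objects of $\cO$). This is exactly the kind of bookkeeping handled in \cite[Section~3]{BG}, and I would lean on \cite[Theorem~3.3]{BG} (= Lemma~\ref{ClassBG}) together with the adjunction formalism to make the extension canonical; the regular-weight reduction via Lemma~\ref{PropBG}\eqref{LemWall} is what keeps the argument clean in the singular case.
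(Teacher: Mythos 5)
The paper does not prove this lemma at all: it is imported wholesale from \cite[Theorem~3.5]{BG}, so there is no in-paper argument to compare against. Judged on its own, though, your proposal has genuine gaps on both sides of the bijection.

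For injectivity, you want to deduce $\alpha=0$ from $\alpha_{\Delta_\lambda}=0$ by ``applying projective functors to $\Delta_\lambda$.'' Two things go wrong. First, a module of the form $G\Delta_\lambda$ with $G=\pr_{\eta(\lambda)}(-\otimes E)$ lies in $U\text{-mod}_{\eta(\lambda)}$, but generally \emph{not} in $U\text{-mod}_{\eta(\lambda)}^0$: the maximal ideal $\mm_\chi$ acts locally nilpotently, not by zero, on $\Delta_\lambda\otimes E$. So $\alpha_{G\Delta_\lambda}$ is not even defined as an instance of the natural transformation you are considering. Second, even where the expression makes sense, naturality of $\alpha\colon F_1\Rightarrow F_2$ only gives you commuting squares against \emph{morphisms} in the source category; it does not say that $\alpha$ ``commutes with $G$'' in a way that lets you compute $\alpha_{GM}$ from $\alpha_M$. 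For a surjection $\pi\colon N\twoheadrightarrow \Delta_\lambda$ with $N\in U\text{-mod}_{\eta(\lambda)}^0$, naturality yields $F_2(\pi)\circ\alpha_N=\alpha_{\Delta_\lambda}\circ F_1(\pi)=0$, but $F_2(\pi)$ is surjective, not injective, so this does not force $\alpha_N=0$. The actual mechanism in \cite{BG} is different: one identifies $F_i$ with $-\otimes_U B_i$ for Harish--Chandra bimodules $B_i$ and then proves that $B\mapsto B\otimes_U\Delta_\lambda$ is faithful (indeed fully faithful) on the relevant bimodule category, using that $\Delta_\lambda$ is free of rank one over $U(\fn^-)$.

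For surjectivity, the assertion that ``$\Delta_\lambda=P_\lambda$ is a projective generator of $\cO_{\eta(\lambda)}$'' is false: for dominant regular $\lambda$ the module $\Delta_\lambda$ is projective, but it covers only $L_\lambda$ and is a generator only if the block is semisimple. And since $\End_{\fg}(\Delta_\lambda)=\mC$ (Dixmier's Schur lemma), recording a right $\End(\Delta_\lambda)$-module structure on $F_\lambda^{\mu_i}\Delta_\lambda$ carries no information and cannot recover the functor. The correct input from \cite{BG} is not the classification of projective functors \cite[Theorem~3.3]{BG} but the bimodule comparison just described, together with the two-term free presentation of any object of $U\text{-mod}_{\eta(\lambda)}^0$ by copies of $U/U\mm_\chi$; neither appears in your sketch, and the passage from $\cO_{\eta(\lambda)}$ to all of $U\text{-mod}_{\eta(\lambda)}^0$ is precisely what the $U/U\mm_\chi$-presentation is for. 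The regular-weight reduction via Lemma~\ref{PropBG}\eqref{LemWall} is also unnecessary and delicate to set up, since the \cite{BG} argument treats dominant singular $\lambda$ directly.

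In short, you have correctly located the crux (that an abstract $\fg$-map between the values at $\Delta_\lambda$ must promote to a natural transformation on a category larger than $\cO$), but the mechanisms you invoke do not supply it; the argument needs the Harish--Chandra bimodule formalism and the faithfulness of $-\otimes_U\Delta_\lambda$, which is exactly the content of \cite[Theorem~3.5]{BG} that the paper simply cites.
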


\subsection{Kazhdan-Lusztig combinatorics and type~A}\label{TypeA}
We keep the notation and assumptions of the previous subsection.

For a fixed regular dominant $\lambda\in\Lambda$, we use the notation 
$\theta_x:= F^{x\lambda}_\lambda$, for all~$x\in W$. It then follows from the 
validity of the Kazhdan-Lusztig conjecture, see \cite{BB, Kashiwara} and 
Lemma~\ref{ClassBG}, that the composition of the projective functors 
$\theta_x$ is governed by Kazhdan-Lusztig combinatorics. Concretely, 
if we have the {\em Kazhdan-Lusztig basis} $\{C'_w\,|w\in W\}$ of the group 
ring $\mZ W$ ({\it i.e.} the Hecke algebra of $W$specialized at $q=1$) 
of \cite[\S 1]{KL}, then we have
$$C'_xC'_y\;=\;\sum_{z\in W}h_{x,y,z}C'_z\quad\mbox{and}\quad \theta_y\theta_x\;=
\;\bigoplus_{z\in W}\theta_z^{\oplus h_{x,y,z}},$$
for the same coefficients $h_{x,y,z}\in\mathbb{Z}_{\geq 0}$, see \cite[Corollary~5.2.4]{Irving}.

Consequently, the Kazhdan-Lusztig preorders of \cite[\S 1]{KL} can be realised 
by projective functors on a regular block. For convenience, we extend this to 
the entire category of projective functors.
We thus introduce the following preorder $\preceq$ on the set of (isomorphism 
classes of) indecomposable projective functors. We say that $F\preceq G$ if 
$G$ appears as a direct summand of~$F'\circ F\circ F''$, for some projective 
functors $F',F''$. We have the corresponding equivalence relation, denoted 
$F\sim G$, which means that $F\preceq G\preceq F$. Equivalence classes for
$\sim$ are called {\em two-sided cells}. Similarly one defines the
{\em left} and the {\em right preorders} and the {\em left} and the {\em right
cells}.

If the Weyl group $W$ is the symmetric group, the preorder $\preceq$ 
can be described in terms of the dominance order on partitions
using the Robinson-Schensted correspondence, see~\cite[Theorem~5.1]{Geck}. 
As a consequence, each two-sided Kazhdan-Lusztig cell contains the longest element 
of some parabolic subgroup. Another consequence (see~\cite[Corollary~5.6]{Geck}) 
is that a left and a right cell inside the same two-sided cell
intersect in at most one element. By the above and \cite[Theorem 5.3]{Geck}
this translates into two well-known facts {\em for Lie algebras of type~$A$. }

Let $\lambda\in\fh^\ast$ be integral, regular and  dominant, and $\theta$ 
an indecomposable projective endofunctor of $U$-mod$_{\eta(\lambda)}$.
\begin{itemize}
\item[Fact 1:] There exists $x\in W$, which is the longest 
element of a parabolic subgroup such that $\theta\sim \theta_{x}$.
\item[Fact 2:] The only indecomposable projective endofunctor 
$\theta'$ of $U$-mod$_{\eta(\lambda)}$ which satisfies $\theta'\sim\theta$ 
and appears both as a direct summand in $\theta\circ G_1$ and $G_2\circ \theta$, 
for some projective functors $G_1,G_2$, is $\theta$ itself. 
\end{itemize} 
We translate these facts into the formulation that we will require.

\begin{lemma}\label{LemCell}
Assume $\fg$ is of type~$A$ and fix a $\sim$-equivalence class of projective functors.
\begin{enumerate}[$($i$)$] 
\item\label{LemCell.1} The class contains $F^\mu_\mu=\Id_{\eta(\mu)}$, 
the identity functor of~$U${\rm -mod}$_{\eta(\mu)}$, for some dominant $\mu\in\fh^\ast$. 
\item\label{LemCell.2} No other indecomposable projective endofunctors 
of~$U${\rm -mod}$_{\eta(\mu)}$ are contained in the class.
\end{enumerate}
\end{lemma}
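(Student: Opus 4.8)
The plan is to translate Fact~1 and Fact~2, which are stated for regular integral dominant $\lambda$, into the statement about an arbitrary $\sim$-equivalence class $\mathcal{C}$ of indecomposable projective functors. First I would recall that, by Lemma~\ref{ClassBG}, every indecomposable projective functor is of the form $F^\mu_\lambda$ for a proper representative $(\mu,\lambda)$, and that by Lemma~\ref{PropBG}\eqref{LemBGequiv} we may use the equivalences $F^\mu_\lambda$ (for $W_\lambda = W_\mu$) to move freely between blocks with the same stabilizer; in particular such equivalences are themselves $\sim$-equivalent to identity functors and conjugating by them preserves the preorder $\preceq$, hence the class $\mathcal{C}$. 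So it suffices to analyze $\mathcal{C}$ "up to such equivalences", and in particular I may assume the source and target central characters of a chosen member of $\mathcal{C}$ are integral (replacing a coset $\Gamma \in \fh^\ast/\Lambda$ by working inside the integral Weyl group $W^\Gamma$, which in type~$A$ is again a product of symmetric groups, so Fact~1 and Fact~2 apply verbatim to $W^\Gamma$).

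For part~\eqref{LemCell.1}: pick any $F^\mu_\lambda \in \mathcal{C}$ with $\lambda$ dominant. Using Lemma~\ref{PropBG}\eqref{LemWall} (translation to and from a wall), or more simply composing with the projective functors realizing the KL-preorder via $\theta_x\theta_y = \bigoplus \theta_z^{\oplus h_{x,y,z}}$, I would pass to a regular integral dominant weight $\nu$ in the relevant coset and locate inside $\mathcal{C}$ an indecomposable projective endofunctor $\theta$ of $U\text{-mod}_{\eta(\nu)}$. By Fact~1 there is $x \in W^\Gamma$, the longest element of a parabolic subgroup $W_\sigma \subseteq W^\Gamma$, with $\theta \sim \theta_x$. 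Now $\theta_x$ is the translation-on-and-off-the-wall functor through a weight $\mu$ with $W_\mu = W_\sigma$; concretely, writing $x = w_0^\mu$ one has, as in Lemma~\ref{PropBG}\eqref{LemWall}, that $F^\mu_\nu F^{x\nu}_\mu \cong \Id_{\eta(\mu)}^{\oplus |W_\mu|}$ and $\theta_x = F^{x\nu}_\mu F^\mu_\nu$. From the first isomorphism, $\Id_{\eta(\mu)}$ is a direct summand of $F^\mu_\nu \circ \theta_x \circ (\text{something})$ composed appropriately, so $\theta_x \preceq \Id_{\eta(\mu)}$; conversely $\Id_{\eta(\mu)} \preceq \theta_x$ because $\theta_x = F^{x\nu}_\mu \circ F^\mu_\nu$ and $F^\mu_\nu \circ \Id_{\eta(\nu)} \circ F^{x\nu}_\mu$ has $\Id_{\eta(\mu)}^{\oplus|W_\mu|}$ as a summand. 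Hence $\theta_x \sim \Id_{\eta(\mu)}$, so $\Id_{\eta(\mu)} \in \mathcal{C}$, and $\Id_{\eta(\mu)} = F^\mu_\mu$ as required.

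For part~\eqref{LemCell.2}: suppose $F^{\mu'}_{\mu'} = \Id_{\eta(\mu')}$ also lies in $\mathcal{C}$, with $\mu'$ dominant; after the reduction above both are endofunctors associated to stabilizers that are parabolics of the same $W^\Gamma$. I claim $\Id_{\eta(\mu)} \sim \Id_{\eta(\mu')}$ forces $W_\mu$ and $W_{\mu'}$ to be conjugate parabolic subgroups — indeed, the two-sided cell of $w_0^\mu$ in $W^\Gamma$ is determined by the conjugacy class of $W_\mu$ via the Robinson–Schensted/dominance-order description, and two longest-elements-of-parabolics are $\sim$-equivalent iff the parabolics are conjugate. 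Then, transporting $\Id_{\eta(\mu')}$ through the equivalence $F^{\mu'}_{w\mu}$ (some $w \in W^\Gamma$ conjugating $W_{w\mu} = W_{\mu'}$), I reduce to the case $\mu' = \mu$, i.e. both are the same identity functor. To finish I invoke Fact~2: the identity functor $\Id_{\eta(\mu)}$ trivially appears as a direct summand of $\theta \circ \Id_{\eta(\mu)}$ and $\Id_{\eta(\mu)} \circ \theta$ for every indecomposable projective endofunctor $\theta$ of $U\text{-mod}_{\eta(\mu)}$ that is $\sim$-equivalent to it; Fact~2 then says $\Id_{\eta(\mu)}$ is the only such $\theta$. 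Hence no other indecomposable projective \emph{endo}functor of $U\text{-mod}_{\eta(\mu)}$ is in $\mathcal{C}$.

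I expect the main obstacle to be the non-integral case: making precise that one may replace $W$ by the integral Weyl group $W^\Gamma$ throughout, that $W^\Gamma$ is still a product of symmetric groups in type~$A$ (so that Fact~1, Fact~2, and the cell combinatorics of \cite{Geck} transfer), and that the equivalences of Lemma~\ref{PropBG}\eqref{LemBGequiv} between blocks with equal stabilizer are compatible with the preorder $\preceq$ — i.e. conjugating a projective functor by such an equivalence does not change its $\sim$-class. Once this bookkeeping is in place, parts~\eqref{LemCell.1} and~\eqref{LemCell.2} are direct consequences of Fact~1 and Fact~2 respectively, together with the elementary manipulations of $\preceq$ using biadjunction (Lemma~\ref{PropBG}\eqref{LemWall}).
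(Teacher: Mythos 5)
Your proof of part~\eqref{LemCell.1} follows the paper's route: translate to a regular block, invoke Fact~1 to get $\theta\sim\theta_{x_0}$ with $x_0$ the longest element of a parabolic $W_\mu$, and then use Lemma~\ref{PropBG}\eqref{LemWall} to deduce $\theta_{x_0}\sim\Id_{\eta(\mu)}$. (Your sentence beginning ``conversely $\Id_{\eta(\mu)}\preceq\theta_x$'' gives a reason that actually shows $\Id_{\eta(\nu)}\preceq\Id_{\eta(\mu)}$, not what you want; the correct one-liner is simply $\theta_{x_0}=F^{\lambda'}_\mu\circ\Id_{\eta(\mu)}\circ F^\mu_\lambda$. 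But this is a slip, not a gap.)

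Your proof of part~\eqref{LemCell.2}, however, has a genuine gap. Fact~2 is stated only for indecomposable projective endofunctors of the \emph{regular} block $U\text{-mod}_{\eta(\lambda)}$: it encodes Geck's result that a left and a right Kazhdan--Lusztig cell within the same two-sided cell of $W$ (or $W^\Gamma$) intersect in at most one element, via the identification $\theta_y\theta_x\cong\bigoplus_z\theta_z^{\oplus h_{x,y,z}}$, and this identification is specific to the regular block. You apply Fact~2 directly to an endofunctor $H$ of the \emph{singular} block $U\text{-mod}_{\eta(\mu)}$, and indeed the sentence ``$\Id_{\eta(\mu)}$ trivially appears as a direct summand of $\theta\circ\Id_{\eta(\mu)}$'' is vacuous: $\theta\circ\Id_{\eta(\mu)}=\theta$, so having $\Id_{\eta(\mu)}$ as an indecomposable summand of an indecomposable $\theta$ already forces $\theta=\Id_{\eta(\mu)}$, which is what you are trying to prove. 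The paper avoids this by first conjugating $H$ to the regular block: it sets $H_1:=F^{\lambda'}_\mu H F^\mu_\lambda$, proves $\theta_{x_0}H_1=H_1^{\oplus|W_\mu|}=H_1\theta_{x_0}$ and $F^\mu_\lambda H_1 F^{\lambda'}_\mu=H^{\oplus|W_\mu|^2}$, applies Fact~2 to the indecomposable summand $H_1'$ of $H_1$ lying in the cell of $\theta_{x_0}$ to deduce $H_1'=\theta_{x_0}$, and only then translates back to force $H=\Id_{\eta(\mu)}$. This transport to the regular block is the crucial step missing from your argument. Your preliminary reduction involving $\mu'$ and conjugate parabolics is also a detour: part~\eqref{LemCell.2} asks only about endofunctors of the single block $U\text{-mod}_{\eta(\mu)}$ produced in part~\eqref{LemCell.1}, so there is no second weight $\mu'$ to compare with.
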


\begin{proof}
We start from an arbitrary indecomposable projective functor
$$F: U\mbox{-mod}_{\chi_1}\to U\mbox{-mod}_{\chi_2}$$
and consider its equivalence class. It follows from 
Lemma~\ref{PropBG}\eqref{LemWall} that $F\sim G$, for some 
endofunctor $G$ of $U\mbox{-mod}_{\eta(\lambda)}$, 
where $\lambda$ is any fixed dominant regular weight in $\eta^{-1}(\chi_1)+\Lambda$. 
 
Assume, for simplicity, that $\lambda$ is integral. The non-integral case 
is proved using the same arguments, since the integral Weyl group in type~$A$
is always of type~$A$.
By Fact~1, we have $G\sim\theta_{x_0}$, for the longest element $x_0$ of some 
parabolic subgroup of~$W$. Take some dominant $\mu\in\Lambda$ such that $W_\mu$ 
is this parabolic subgroup. Then we have $\theta_{x_0} =F^{\lambda'}_{\mu}F^{\mu}_{\lambda}$. 
By Lemma~\ref{PropBG}\eqref{LemWall}, we have
 $$\theta_{x_0} \sim F^{\lambda'}_\mu\sim F_\lambda^\mu\sim \Id_{\eta(\mu)}.$$
Consequently, we have $F\sim \Id_{\eta(\mu)}$, which concludes the proof of part 
\eqref{LemCell.1}. 
 
To prove part \eqref{LemCell.2}, we assume we have $\Id_{\eta(\mu)}\sim H$, 
for some indecomposable projective endofunctor $H$ of $U${\rm -mod}$_{\eta(\mu)}$. 
We set $H_1:=F^{\lambda'}_\mu H F^\mu_\lambda$. 
Lemma~\ref{PropBG}\eqref{LemWall} implies that
$$\theta_{x_0}H_1\;=\; H_1^{\oplus |W_\mu|}\;=\; H_1\theta_{x_0}
\quad\mbox{and}\quad F^\mu_\lambda H_1F^{\lambda'}_\mu=H^{\oplus |W_\mu|^2}.$$
The second equation shows that there exists an indecomposable summand $H_1'$ of 
$H_1$ for which we have $H_1'\sim H\sim \theta_{x_0}$. By Fact 2, the first equation 
therefore implies that $H_1'=\theta_{x_0}$. Applying the second equation 
again together with Lemma~\ref{PropBG}\eqref{LemWall} then shows that 
$\Id_{\eta(\mu)}$ must appear as a direct summand in $H^{\oplus |W_\mu|^2}$. 
This is only possible if $H=\Id_{\eta(\mu)}$. This concludes the proof of part \eqref{LemCell.2}.
\end{proof}

\begin{remark}\label{remprof}
{\rm
Facts 1 and 2 generally fail for other Weyl groups, implying that Lemma~\ref{LemCell} 
is specific to Lie algebras of type~$A$. For instance, Fact 1 fails for $B_5$, since in this case 
the unique two-sided cell having the $\mathsf{a}$-value $11$ contains no longest element
of parabolic subgroups (a private communication by Tobias Kildetoft). 
Fact~1 remains valid, for instance, for $\{B_n\,|\, n<5\}$, but there Fact~2 fails, for $n>1$, 
see \cite[Appendix]{MMMZ}.
}
\end{remark}

\section{Arbitrary Lie algebras}\label{sec3}

Fix a finite dimensional Lie algebra $\fk$ over a field~$\mk$ and set $U=U(\fk)$.

\subsection{Connection with GK dimension}\label{sec3.1} 

\begin{theorem}\label{ThmMain}
Fix a simple~$U$-module~$S$ and $E\in \cF$. Then~$T:=S\otimes E$ has finite type radical, 
and $\soc(T)$ has finite length. Furthermore, the following are equivalent:
\begin{enumerate}[$($a$)$]
\item\label{ThmMain.1} The module~$T=S\otimes E$ has finite type socle.
\item\label{ThmMain.2} Any $N\in U$-mod with non-zero morphism $N\otimes E^\ast\to S$ has a simple subquotient~$L$ with non-zero morphism $L\otimes E^\ast\to S$.
\item\label{ThmMain.3} Every non-zero submodule of  $T$ contains a simple subquotient of 
Gelfand-\-Ki\-ri\-llov dimension $\GK(S)$.
\end{enumerate}
\end{theorem}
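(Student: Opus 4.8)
The plan is to establish the two unconditional claims first, then set up the cycle of implications (a)$\Rightarrow$(c)$\Rightarrow$(b)$\Rightarrow$(a). For the unconditional part, note that $S$ is simple, hence cyclic, so $S = U/\Ann_U(v)$ for a maximal left ideal; thus $S$ has finite type radical (indeed $\rad(S)=0$ and $S$ is its own simple top). Since $E$ is finite dimensional, $T = S\otimes E$ is a quotient of $S^{\oplus \dim E} \cong S\otimes(\text{trivial of dim }\dim E)$ after forgetting module structure — more precisely, $F_E$ is exact and $T$ is generated by $v\otimes E$, so $T$ is noetherian. By Remark~\ref{RemRad}\eqref{RemRad.3}, $\rad(T)$ is superfluous, and by Remark~\ref{RemSoc}\eqref{RemSoc.3}, $\soc(T)$ has finite length. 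For $T/\rad(T)$ having finite length: here I would use \eqref{eqJan-2}, namely $\GK(T)=\GK(S)$ and $\BN(T)=\dim(E)\BN(S)$; every simple subquotient of $T$ of GK-dimension $\GK(S)$ contributes at least $1$ to the Bernstein number by Lemma~\ref{LemGK}, so there are at most $\dim(E)\BN(S)$ of them. Since any nonzero quotient of the noetherian module $T$ has a simple quotient, and $T/\rad(T)$ is a (possibly infinite) product-embeddable semisimple-type object whose simple summands each have GK-dimension $\GK(S)$ — because they are quotients of $T$, hence of GK-dimension $\le\GK(S)$, and of GK-dimension $\ge\GK(S)$ since... — this is exactly the point that needs care, so I expand on it below. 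Granting it, $T/\rad(T)$ has finite length, and with $T$ finitely generated, Remark~\ref{RemRad}\eqref{RemRad.2} gives finite type radical.

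The subtle step — and the main obstacle — is controlling the GK-dimension of simple \emph{subquotients} of $T$, particularly simple quotients. A simple quotient $L$ of $T=S\otimes E$ yields, by adjunction $\Hom_U(S\otimes E, L)\cong \Hom_U(S, L\otimes E^\ast)$, a nonzero (hence injective, as $S$ is simple) map $S\hookrightarrow L\otimes E^\ast$; thus $\GK(S)\le \GK(L\otimes E^\ast)=\GK(L)$ by Lemma~\ref{LemGK}\eqref{LemGK.1} and \eqref{eqJan-2}. Conversely $\GK(L)\le\GK(S\otimes E)=\GK(S)$ since $L$ is a quotient of $T$. Hence every simple quotient of $T$ has GK-dimension exactly $\GK(S)$, and the Bernstein-number bound applies to show $T/\rad(T)$ has finite length; I would phrase this via: $T$ admits a GK-complete filtration (build it by repeatedly splitting off maximal-GK simple subquotients, which terminates by the $\BN$ bound and Lemma~\ref{LemGK}), and then use Lemma~\ref{LemGKcom}. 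This same reasoning, applied to the submodule side, is what powers the equivalences.

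For (a)$\Rightarrow$(c): if $K\subset T$ is a nonzero submodule, then $\soc(K)\neq 0$ by finite type socle (Remark~\ref{RemSoc}\eqref{RemSoc.2}), so $K$ contains a simple submodule $L$; as $L\hookrightarrow T$ is a submodule of a quotient of $S\otimes E$... again $\GK(L)\le\GK(S)$, and by adjunction a nonzero map $L\otimes E^\ast\to S$ — wait, the correct adjunction for a submodule: $L\subset S\otimes E$ gives $0\ne\Hom_U(L, S\otimes E)\cong\Hom_U(L\otimes E^\ast, S)$, whose image in $S$ is all of $S$ by simplicity, so $\GK(L\otimes E^\ast)\ge\GK(S)$, i.e. $\GK(L)=\GK(S)$. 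This proves (c), in fact with $L$ a submodule. For (c)$\Rightarrow$(b): given $N\in U\text{-mod}$ with $0\ne\varphi: N\otimes E^\ast\to S$, by adjunction this is a nonzero $\tilde\varphi: N\to S\otimes E=T$, so $\im\tilde\varphi$ is a nonzero submodule of $T$, which by (c) has a simple subquotient $L$ with $\GK(L)=\GK(S)$. I must upgrade "GK-dimension $\GK(S)$" to "nonzero map $L\otimes E^\ast\to S$": since $L$ is a subquotient of $\im\tilde\varphi\subseteq T$, it is a subquotient of $T$; take a GK-complete filtration of $T$ (available as above) — by Lemma~\ref{LemGKcom} refine to see $L$ as a genuine composition factor of the maximal-GK part; then an Ext/adjunction argument, using that $\Hom_U(-\otimes E^\ast, S)$ is exact on the relevant short exact sequences with middle GK $=\GK(S)$ and using \eqref{eqJan}-type annihilator control... this is delicate. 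Finally (b)$\Rightarrow$(a): take $0\ne K\subseteq T$; the inclusion gives $0\ne\Hom_U(K\otimes E^\ast,S)$ via adjunction, so applying (b) to $N=K$ yields a simple subquotient $L$ of $K$ with a nonzero map $L\otimes E^\ast\to S$, equivalently (adjunction) $L\hookrightarrow S\otimes E=T$; this simple module embeds into $T$. It need not a priori lie inside $K$, so one more step is needed: among subquotients of $K$ of GK-dimension $\GK(S)$ there is one that is a \emph{submodule} of $K$ — I would extract this using a GK-complete filtration of $K$ (which exists since $\GK(K)=\GK(T)=\GK(S)$, by Lemma~\ref{LemGKcom}), whose bottom nonzero maximal-GK layer is a simple submodule. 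Hence $\soc(K)\ne0$, giving (a) by Remark~\ref{RemSoc}\eqref{RemSoc.2}.
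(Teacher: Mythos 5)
Your unconditional claims (noetherian, superfluous radical, finite-length socle, and the Bernstein-number bound giving $T/\rad(T)$ finite length via the adjunction argument that simple quotients of $T$ have GK-dimension exactly $\GK(S)$) are correct and essentially the paper's route, and your (a)$\Rightarrow$(c) is also fine. But your chosen cycle (a)$\Rightarrow$(c)$\Rightarrow$(b)$\Rightarrow$(a) runs into two real gaps, both of which you flag yourself without resolving.

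The first gap is your appeal to a GK-complete filtration of $T$. You claim one can "build it by repeatedly splitting off maximal-GK simple subquotients," but this is false in general: the whole point of the theorem is that conditions (a)--(c) can \emph{fail}, and the paper's Corollary~\ref{completeCor} shows that a GK-complete filtration of $T$ forces (a). If a GK-complete filtration always existed, the equivalence would be vacuous. Concretely, nothing prevents an infinite descending chain of submodules, all of GK-dimension $\GK(S)$, with no simple composition factor of that GK-dimension ever appearing. So both your (c)$\Rightarrow$(b) step ("use a GK-complete filtration of $T$... then an Ext/adjunction argument...") and your (b)$\Rightarrow$(a) step ("a GK-complete filtration of $K$... whose bottom nonzero maximal-GK layer is a simple submodule") rest on an unproved, indeed generally false, existence claim. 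You also use $\GK(K)=\GK(S)$ to invoke Lemma~\ref{LemGKcom}, but that lemma \emph{assumes} $T$ has a GK-complete filtration as a hypothesis.

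The second gap is the orientation of the cycle. Going (a)$\Rightarrow$(c)$\Rightarrow$(b)$\Rightarrow$(a) leaves the hard content spread across two implications, neither of which you close. The paper instead proves (a)$\Rightarrow$(b)$\Rightarrow$(c)$\Rightarrow$(a). There (a)$\Rightarrow$(b) is the easy adjunction step, (b)$\Rightarrow$(c) is essentially trivial (you already wrote this argument as the first half of your (b)$\Rightarrow$(a)), and all the weight goes into (c)$\Rightarrow$(a), which is proved by a Bernstein-number descent: given a nonzero $M\subset T$ with zero socle and $[M:L]\neq 0$ for some simple $L$ of top GK-dimension, find $M_1\subset M_2\subset M$ with $M_2/M_1\cong L$; since $\soc(M)=0$, $M_1\neq 0$, and by Lemma~\ref{LemGKses} and Lemma~\ref{LemGK}\eqref{LemGK.2} one gets $\BN(M)>\BN(M_1)>0$; iterating strictly decreases $\BN$, so after finitely many steps one reaches a nonzero submodule $N$ with zero socle and no simple subquotient of GK-dimension $\GK(S)$, contradicting (c). This descent is precisely the mechanism your proposal is missing. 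I suggest reorienting your cycle and replacing the GK-complete-filtration appeals by this Bernstein-number descent.
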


We start the proof with the following lemma.

\begin{lemma}\label{LemGKses}
With $T$ as in Theorem~\ref{ThmMain}, for any short exact sequence of~$U$-modules
$$0\to M_1\to T\to M_2\to 0,\quad\mbox{with}\quad M_1\not=0\not= M_2,$$
we have $\GK(M_1)=\GK(S)=\GK(M_2)$.
\end{lemma}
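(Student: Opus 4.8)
The plan is to exploit the fact that $T = S \otimes E$ sits inside a short exact sequence with both ends nonzero, and that tensoring by $E$ and then by $E^\ast$ almost recovers $T$ via the adjunction $F_E \dashv F_{E^\ast}$ and $F_{E^\ast} \dashv F_E$. First I would record the easy inequalities: by Lemma~\ref{LemGK}\eqref{LemGK.1}, $\GK(M_1) \le \GK(T)$ and $\GK(M_2) \le \GK(T)$, and by \eqref{eqJan-2} we have $\GK(T) = \GK(S\otimes E) = \GK(S)$. So everything is bounded above by $\GK(S)$, and it remains to prove the reverse inequalities $\GK(M_i) \ge \GK(S)$ for $i = 1, 2$.

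The key observation is that $S$ is a subquotient of $T \otimes E^\ast$: indeed, the counit (or unit) of the adjunction between $F_E$ and $F_{E^\ast}$ gives a nonzero $\fk$-module map $S \otimes E \otimes E^\ast \to S$, equivalently a nonzero map $T \otimes E^\ast \to S$, which is surjective since $S$ is simple. Now apply the exact functor $F_{E^\ast} = - \otimes E^\ast$ to the given short exact sequence to get
$$0 \to M_1 \otimes E^\ast \to T \otimes E^\ast \to M_2 \otimes E^\ast \to 0.$$
The image of $M_1 \otimes E^\ast$ in $S$ under the surjection $T\otimes E^\ast \tto S$ is a submodule of the simple module $S$, hence is either $0$ or all of $S$; likewise $M_2 \otimes E^\ast$ surjects onto the cokernel of that image. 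In either case, $S$ is a subquotient of $M_1 \otimes E^\ast$ or of $M_2 \otimes E^\ast$. Therefore by Lemma~\ref{LemGK}\eqref{LemGK.1} (GK dimension of a subquotient is at most that of the module) and \eqref{eqJan-2} again,
$$\GK(S) \le \max\{\GK(M_1 \otimes E^\ast), \GK(M_2 \otimes E^\ast)\} = \max\{\GK(M_1), \GK(M_2)\}.$$

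To upgrade this to $\GK(M_1) = \GK(M_2) = \GK(S)$ (not just that the max equals $\GK(S)$), I would run the argument more carefully, tracking both pieces simultaneously. Since the surjection $T \otimes E^\ast \tto S$ restricted to $M_1 \otimes E^\ast$ has image $0$ or $S$: if the image is $S$ then $\GK(M_1) = \GK(M_1 \otimes E^\ast) \ge \GK(S)$ directly; if the image is $0$, then $M_1 \otimes E^\ast$ lies in the kernel, so $S$ is a quotient of $M_2 \otimes E^\ast$ and $\GK(M_2) \ge \GK(S)$. This handles each case, but to get \emph{both} inequalities I need the symmetric input coming from the other adjunction: there is also a nonzero map $S \to T \otimes E^\ast$ (the unit of $F_{E^\ast} \dashv F_E$ applied appropriately, i.e. $S \hookrightarrow S \otimes E^\ast \otimes E$ is not quite it — rather use that $S$ embeds into $S\otimes E\otimes E^\ast = T\otimes E^\ast$ via the unit), and $S$ being simple this map is injective. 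Pulling back the short exact sequence tensored by $E^\ast$ along $S \hookrightarrow T\otimes E^\ast$ exhibits $S$ as an extension of a submodule of $M_2 \otimes E^\ast$ by a submodule of $M_1 \otimes E^\ast$; simplicity of $S$ forces one of these to be $S$ and hence, as before, $\GK(M_1) \ge \GK(S)$ or $\GK(M_2) \ge \GK(S)$. Combining the surjection argument and the injection argument, and using that whichever of $M_1, M_2$ was ``missed'' by one argument is ``caught'' by the other — together with the elementary fact that a nonzero submodule and nonzero quotient of a module of pure GK dimension $\GK(S)$ both have GK dimension $\GK(S)$ once we know $T$ itself has a filtration with all top-GK subquotients simple (via Lemma~\ref{LemGKcom} and the fact, presumably proved nearby, that $T$ admits a GK-complete filtration) — yields the claim. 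The main obstacle is precisely this last bookkeeping: ensuring that both $M_1$ and $M_2$ individually achieve $\GK(S)$ rather than merely their maximum, which is why I expect to invoke either a GK-completeness property of $T$ or a direct argument that $S$ appears as a subquotient on \emph{both} sides by using the two adjunction (co)units in tandem.
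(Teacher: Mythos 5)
The intended argument is simpler than what you set up, and the place where you get stuck is a sign that you have not used adjunction at full strength. The paper does not tensor the whole short exact sequence by $E^\ast$ and then try to locate $S$ as a subquotient of the result. Instead it applies adjunction \emph{separately} to the two nonzero maps you already have: the inclusion $\iota\colon M_1\hookrightarrow T$ and the projection $\pi\colon T\twoheadrightarrow M_2$. Since $\Hom_{\fk}(M_1,S\otimes E)\cong\Hom_{\fk}(M_1\otimes E^\ast,S)$ is a bijection and $\iota\neq 0$, the adjoint map $M_1\otimes E^\ast\to S$ is automatically nonzero, hence surjective by simplicity of $S$; this gives $\GK(S)\le\GK(M_1\otimes E^\ast)=\GK(M_1)$ with no case split. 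Dually, $\Hom_{\fk}(S\otimes E,M_2)\cong\Hom_{\fk}(S,M_2\otimes E^\ast)$ and $\pi\neq 0$ produce a nonzero, hence injective, map $S\hookrightarrow M_2\otimes E^\ast$, giving $\GK(S)\le\GK(M_2)$. That is the whole proof.

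The gap in your proposal is real. Your surjection argument and your injection argument each produce a disjunction of the form ``$\GK(M_1)\ge\GK(S)$ or $\GK(M_2)\ge\GK(S)$,'' and there is no reason the two disjunctions should select complementary branches: both could, a priori, detect only $M_1$ and say nothing about $M_2$. Your proposed repair, invoking a GK-complete filtration of $T$, is not available: the existence of such a filtration for $S\otimes E$ is precisely a hypothesis the paper imposes later (Corollary~\ref{completeCor}) and is \emph{not} known for arbitrary $\fk$, $S$, $E$, whereas Lemma~\ref{LemGKses} must hold in full generality since it feeds into Theorem~\ref{ThmMain}. The fix is the one above: do not take the universal counit $T\otimes E^\ast\to S$ and then ask where $M_i\otimes E^\ast$ lands; start from the specific nonzero maps $M_1\to T$ and $T\to M_2$ and let the adjunction bijection guarantee that the transposed maps involving $S$ are nonzero.
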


\begin{proof}
We prove that $\GK(M_1)=\GK(S)$. The statement for~$M_2$ is proved similarly. 
By Lemma~\ref{LemGK}(i) and equation~\eqref{eqJan-2}, we have $\GK(M_1)\le \GK(S)$. By adjunction
$$0\;\not=\;\Hom_{\fk}(M_1,T)\;\cong\;\Hom_{\fk}(M_1\otimes E^\ast,S).$$
Hence, $S$ is a quotient of~$M_1\otimes E^\ast$.
Lemma~\ref{LemGK}(i) and equation~\eqref{eqJan-2} thus 
imply that $\GK(S)\le \GK(M_1)$, which concludes the proof.
\end{proof}

\begin{lemma}\label{LemRade}
There exists $k\in\mathbb{Z}_{\geq 0}$, such that $T$ cannot have a semisimple submodule or quotient of length greater than $k$.
\end{lemma}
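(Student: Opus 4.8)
The plan is to bound the length of semisimple submodules and quotients of $T = S\otimes E$ using Gelfand-Kirillov dimension and the Bernstein number, exploiting the fact that $S$ itself is simple.

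First I would recall from Lemma~\ref{LemGKses} that in any short exact sequence $0\to M_1\to T\to M_2\to 0$ with $M_1,M_2\neq 0$, both $M_1$ and $M_2$ have Gelfand-Kirillov dimension equal to $\GK(S)$. It follows that if $N_1\oplus\cdots\oplus N_r\subset T$ is a semisimple submodule with all $N_j\neq 0$, then iterating Lemma~\ref{LemGKses} (applied to the filtration $N_1\subset N_1\oplus N_2\subset\cdots$) shows that every $N_j$ has $\GK(N_j)=\GK(S)=:d$. The same argument applies to a semisimple quotient.

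Next, the Bernstein number gives the bound. By equation~\eqref{eqJan-2} we have $\GK(T)=\GK(S)=d$ and $\BN(T)=\dim_{\mk}(E)\,\BN(S)$. Now suppose $N_1\oplus\cdots\oplus N_r$ is a semisimple submodule of $T$ with each $N_j\neq 0$. Since each $N_j$ has $\GK(N_j)=d$ with Bernstein number $\BN(N_j)\geq 1$, Lemma~\ref{LemGK} applied repeatedly to the filtration of $N_1\oplus\cdots\oplus N_r$ and then to the inclusion $N_1\oplus\cdots\oplus N_r\hookrightarrow T$ gives
$$r\;\leq\;\sum_{j=1}^r \BN(N_j)\;=\;\BN(N_1\oplus\cdots\oplus N_r)\;\leq\;\BN(T)\;=\;\dim_{\mk}(E)\,\BN(S).$$
Hence $k:=\dim_{\mk}(E)\,\BN(S)$ works for submodules. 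For a semisimple quotient $T\tto N_1\oplus\cdots\oplus N_r$, the kernel $K$ either is zero (in which case $T$ itself is semisimple of length $r$, and $\BN(T)=\sum\BN(N_j)\geq r$ directly) or is nonzero, in which case Lemma~\ref{LemGKses} gives $\GK(K)=d$, so Lemma~\ref{LemGK}\eqref{LemGK.2} yields $\BN(T)=\BN(K)+\sum_j\BN(N_j)\geq r$ again. So the same $k$ works.

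I do not expect any serious obstacle here; the only point requiring a little care is the bookkeeping that every summand in a semisimple sub/quotient genuinely has full GK dimension $d$ (rather than some summand having strictly smaller dimension, which would break the Bernstein-number count). This is handled cleanly by the repeated application of Lemma~\ref{LemGKses}, which forces each nonzero piece appearing in such a filtration of $T$ to have GK dimension exactly $\GK(S)$. Once that is in place, additivity of $\BN$ on GK-pure short exact sequences (Lemma~\ref{LemGK}\eqref{LemGK.2}) does the rest.
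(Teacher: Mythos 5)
Your proof is correct and follows essentially the same route as the paper: apply Lemma~\ref{LemGKses} to see that each simple summand of a semisimple submodule or quotient of $T$ has Gelfand--Kirillov dimension $\GK(S)$, then use additivity of the Bernstein number (Lemma~\ref{LemGK}\eqref{LemGK.2}) to bound the length by $\BN(T)=\dim_{\mk}(E)\,\BN(S)$. The paper is terser (proving the quotient case and noting the submodule case follows similarly or from Remark~\ref{RemSoc}\eqref{RemSoc.3}), but the underlying argument is identical.
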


\begin{proof}
We prove the claim for quotients, the case of submodules is proved similarly or follows from Remark~\ref{RemSoc}(iii).
By Lemma~\ref{LemGKses}, a semisimple quotient of~$T$ is a direct sum of simple modules with the GK dimension of each simple equal to~$\GK(S)$. By Lemma~\ref{LemGK}(ii), we can thus choose $k=\BN(T)$.
\end{proof}

\begin{corollary}\label{CorRad}
The module~$T$ has finite type radical.
\end{corollary}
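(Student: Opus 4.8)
The plan is to deduce Corollary~\ref{CorRad} directly from Lemma~\ref{LemRade} together with the general observations already assembled about radicals. Recall that $T = S\otimes E$ is noetherian, since $S$ is a finitely generated $U$-module (being simple and hence cyclic) and $E$ is finite dimensional, so $T$ is a quotient of $U\otimes E$, which is noetherian over the noetherian ring $U$. By Remark~\ref{RemRad}\eqref{RemRad.3}, $\rad(T)$ is therefore a superfluous submodule of $T$, so condition (b) of Definition~\ref{DefRad} is automatic. It remains to verify condition (a): that $T/\rad(T)$ has finite length.

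First I would invoke Lemma~\ref{LemRade}: there is $k\in\mathbb{Z}_{\geq 0}$ such that $T$ has no semisimple quotient of length exceeding $k$. Now suppose for contradiction that $T/\rad(T)$ has length greater than $k$ (in particular it is non-zero, so $\rad(T)\neq T$ and $T$ has maximal submodules). Choose maximal submodules $\mm_1,\dots,\mm_{k+1}$ of $T$ whose intersection $N:=\mm_1\cap\cdots\cap\mm_{k+1}$ is a proper submodule with $T/N$ of length $k+1$ — this is possible precisely because $T/\rad(T)$ has length greater than $k$, so $\rad(T)$ is not the intersection of fewer than $k+1$ maximal submodules. The canonical map $T/N \hookrightarrow \bigoplus_{i=1}^{k+1} T/\mm_i$ exhibits $T/N$ as a submodule of a semisimple module, hence $T/N$ is itself semisimple of length $k+1$. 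But $T/N$ is a quotient of $T$, contradicting the choice of $k$ in Lemma~\ref{LemRade}.

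Therefore $T/\rad(T)$ has length at most $k$; in particular it has finite length, establishing condition (a). Together with the superfluousness of $\rad(T)$ noted above, this shows $T$ has finite type radical, which is the assertion of Corollary~\ref{CorRad}. (Alternatively, once finite length of $T/\rad(T)$ is known, one may appeal to Remark~\ref{RemRad}\eqref{RemRad.2}, since $T$ is finitely generated, to conclude directly that $\rad(T)$ is superfluous and hence that $T$ has finite type radical.)

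I do not expect a genuine obstacle here: the entire content is already packaged into Lemma~\ref{LemRade}, and the only point requiring a moment's care is the passage from ``no large semisimple quotient'' to ``$T/\rad(T)$ of finite length,'' which is the standard fact that if $M/\rad(M)$ had infinite length one could carve out arbitrarily long semisimple quotients as above. One should also make sure to record why $T$ is noetherian (hence finitely generated, hence has proper radical), but this is immediate from $S$ being cyclic and $E$ finite dimensional over the noetherian ring $U(\fk)$.
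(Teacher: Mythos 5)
Your proof is correct and follows essentially the same strategy as the paper's: reduce to showing $T/\rad(T)$ has finite length, then argue by contradiction that an infinite-length top would yield arbitrarily long semisimple quotients of $T$, contradicting Lemma~\ref{LemRade}. The only cosmetic difference is that the paper invokes Remark~\ref{RemRad}\eqref{RemRad.2} in one stroke to handle both conditions of Definition~\ref{DefRad}, whereas you verify superfluousness separately via Remark~\ref{RemRad}\eqref{RemRad.3} and spell out the construction of the long semisimple quotient (via the embedding $T/N\hookrightarrow\bigoplus_i T/\mm_i$), which the paper leaves implicit.
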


\begin{proof}
Since $T$ is finitely generated, it suffices by Remark~\ref{RemRad}(ii) to show that the module $T/\rad(T)$ has finite length. If it would have infinite length, we could take an arbitrarily large, but finite, direct sum of simple modules as a quotient of~$T$, which is contradicted by Lemma~\ref{LemRade}.
\end{proof}

\begin{proof}[Proof of Theorem~\ref{ThmMain}]
This first claim is Corollary~\ref{CorRad} and Lemma~\ref{LemRade}. Now we prove the equivalence of \eqref{ThmMain.1}, \eqref{ThmMain.2} and \eqref{ThmMain.3}.

First we prove that \eqref{ThmMain.1} implies \eqref{ThmMain.2}. Take $N$ with non-zero $N\otimes E^\ast\to S$ and consider the corresponding non-zero morphism $\alpha:N\to S\otimes E$. The image $\im(\alpha)$ is a submodule of~$S\otimes E$ and thus has a non-zero socle by assumption. We take a simple module~$L$ in that socle, which, by construction, is a subquotient of~$N$. The inclusion $L\hookrightarrow S\otimes E$ yields a non-zero morphism $L\otimes E^\ast\to S$.

Next we note that \eqref{ThmMain.2} trivially implies \eqref{ThmMain.3}.
So, it remains to show that \eqref{ThmMain.3} implies \eqref{ThmMain.1}.  We assume (c) holds and set $d:=\GK(S)$. To obtain a contradiction via Remark~\ref{RemSoc}\eqref{RemSoc.2}, we assume we have a non-zero submodule~$M$ of~$T$ with zero socle. Assume we have $[M:L]\not=0$, for some simple $U$-module~$L$ with $\GK(L)=d$. We thus must have submodules 
$$M_1\subset M_2\subset M,\qquad\mbox{with}\quad M_2/M_1\cong L.$$
Since $M$ has zero socle, we have $M_1\not=0$. By construction, $M_1$ is again a submodule of $T$ with zero socle. 
By Lemma~\ref{LemGKses}, we have $\GK(M_1)=d=\GK(M)$. By Lemma~\ref{LemGK}\eqref{LemGK.2}, we have $\BN(M)>\BN(M_1)>0$. This means that, after repeating the above construction $M\mapsto M_1$ a finite number times, we obtain a non-zero submodule~$N$ of~$M$ (and hence of~$T$) with zero socle, and such that $[N:L]=0$, for all simple $U$-modules with $\GK(L)=d$.
It thus follows that any simple subquotient~$L$ of~$N$ has Gelfand-Kirillov dimension
less than $d$. Hence 
assumption~\eqref{ThmMain.3} yields a contradiction. This completes the proof.
\end{proof}

\begin{corollary}\label{completeCor}
If $T=S\otimes E$ admits a GK-complete filtration, it has finite type socle.
\end{corollary}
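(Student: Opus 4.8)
The plan is to deduce this from the equivalence of \eqref{ThmMain.1} and \eqref{ThmMain.3} in Theorem~\ref{ThmMain}, by verifying condition \eqref{ThmMain.3} directly. So suppose $T = S \otimes E$ admits a GK-complete filtration, and let $M \subset T$ be a non-zero submodule; I want to show $M$ contains a simple subquotient of Gelfand-Kirillov dimension $\GK(S)$. By Lemma~\ref{LemGKses}, any non-zero submodule of $T$ has Gelfand-Kirillov dimension exactly $\GK(S) = \GK(T)$; in particular $\GK(M) = \GK(T)$.

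Now I would invoke Lemma~\ref{LemGKcom}: since $M$ is a subquotient (indeed a submodule) of $T$ with $\GK(M) = \GK(T)$, the module $M$ itself admits a GK-complete filtration $F_\bullet M$. I claim such a filtration must have some layer $F_iM/F_{i+1}M$ with Gelfand-Kirillov dimension equal to $\GK(M)$. Indeed, by Lemma~\ref{LemGK}\eqref{LemGK.1} applied inductively along the filtration, $\GK(M) = \max_i \GK(F_iM/F_{i+1}M)$, so the maximum is attained by at least one layer. For that layer $i$, the defining property of a GK-complete filtration forces $F_iM/F_{i+1}M$ to be simple, and it has Gelfand-Kirillov dimension $\GK(M) = \GK(S)$. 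Thus $M$ contains a simple subquotient of Gelfand-Kirillov dimension $\GK(S)$, which is exactly condition \eqref{ThmMain.3}.

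Having verified \eqref{ThmMain.3}, Theorem~\ref{ThmMain} gives that $T$ has finite type socle, completing the argument. I do not anticipate a genuine obstacle here: the statement is essentially a packaging of Theorem~\ref{ThmMain} together with Lemma~\ref{LemGKcom}, and the only point requiring a moment's care is the observation that a GK-complete filtration of a module necessarily realizes the module's own Gelfand-Kirillov dimension in one of its layers — which is immediate from the additivity/maximum behaviour of $\GK$ in short exact sequences (Lemma~\ref{LemGK}\eqref{LemGK.1}). If one wanted to avoid even that small step, one could instead apply \eqref{ThmMain.2} or argue via Remark~\ref{RemSoc}\eqref{RemSoc.2} directly, but the route through \eqref{ThmMain.3} is the cleanest.
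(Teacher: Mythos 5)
Your proof is correct and follows essentially the same route as the paper's: both reduce to Theorem~\ref{ThmMain}(c) via Lemmata~\ref{LemGKcom} and~\ref{LemGKses}, using the fact that a GK-complete filtration of a submodule must have a simple layer realizing the full Gelfand--Kirillov dimension. The only cosmetic difference is that the paper argues by contradiction (producing a submodule $N$ with $[N:L]=0$ for all simple $L$ of maximal GK dimension, and then noting $N$ cannot have a GK-complete filtration), whereas you verify condition (c) directly and spell out the step that the maximal GK dimension must be attained in a layer.
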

\begin{proof}
By Lemmata~\ref{LemGKcom} and~\ref{LemGKses}, any submodule of~$T$ has a GK-complete filtration. Assume that $T$ does not have finite type socle. By Theorem~\ref{ThmMain}, 
this means that $T$ has a submodule~$N$ with $\GK(N)=d$ and such that $[N:L]=0$, 
for all simple modules $L$ with $\GK(L)=d$. Hence $N$ cannot have a GK-complete 
filtration, a contradiction.
\end{proof}

\subsection{Restriction to blocks}\label{sec3.2}

\begin{theorem}\label{ThmBlock}
Let $S$ be a simple $U$-module with central character $\chi\in \Theta$. 
If $F(S)$ has finite type socle, for each projective endofunctor 
$F$ of~$U\mbox{\rm -mod}_{\chi}$, then we have the following:
\begin{enumerate}[$($i$)$]
\item\label{ThmBlock.1} The module $S\otimes E$ has finite type socle, for each $E\in\cF$.
\item\label{ThmBlock.2} The module $S'\otimes E$ has finite type socle, 
for each $E\in \cF$ and simple submodule $S'$ of~$S\otimes V$, for some $V\in \cF$.
\end{enumerate}
\end{theorem}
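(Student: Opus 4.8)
The plan is to reduce everything to Theorem~\ref{ThmMain}(b), which characterizes finite type socle of a module $M\otimes E$ in terms of a ``subquotient detection'' property, and then exploit the hypothesis about projective endofunctors of $U\mbox{-mod}_\chi$ together with the central decomposition $U\mbox{-mod}_Z=\bigoplus_{\psi\in\Theta}U\mbox{-mod}_\psi$. For part \eqref{ThmBlock.1}, fix $E\in\cF$ and, using Theorem~\ref{ThmMain}, it suffices to verify condition (b) there: given $N\in U$-mod with a non-zero map $N\otimes E^\ast\to S$, I must produce a simple subquotient $L$ of $N$ with a non-zero map $L\otimes E^\ast\to S$. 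First I would observe that, since $S$ has central character $\chi$, any non-zero morphism $N\otimes E^\ast\to S$ factors through $\pr_\chi(N\otimes E^\ast)$; and by adjunction $\pr_\chi(N\otimes E^\ast)$ is, up to the projective functor formalism of Lemma~\ref{ClassBG}, obtained by applying a projective functor $F_{E^\ast}$ followed by $\pr_\chi$. The key point is that $\pr_\chi\circ F_{E^\ast}\circ\pr_{\chi'}$, restricted appropriately, decomposes as a finite sum of indecomposable projective functors $U\mbox{-mod}_{\chi'}\to U\mbox{-mod}_\chi$ (Lemma~\ref{ClassBG}), and — here is the crux — I want to turn the detection problem on $N$ into a detection problem handled by the hypothesis ``$F(S)$ has finite type socle for every projective \emph{endofunctor} $F$ of $U\mbox{-mod}_\chi$.''

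The mechanism for that is adjunction run in the other direction. A non-zero map $N\otimes E^\ast\to S$ corresponds to a non-zero map $N\to S\otimes E$, hence to a non-zero map $\pr_\chi N\to \pr_\chi(S\otimes E)$ after decomposing $N=\bigoplus_{\psi}\pr_\psi N$ over central characters and noting only finitely many summands can map non-trivially (since $S\otimes E$ lies in finitely many blocks). Now $\pr_\chi(S\otimes E)$ is $F(S)$ for a projective endofunctor $F$ of $U\mbox{-mod}_\chi$ — wait, not quite, since $F_E$ raises the central character; rather one writes $S\otimes E$ as a sum over blocks $\pr_\psi(S\otimes E)$, each of which is $\pr_\psi F_E\pr_\chi(S) = F'(S)$ for a projective functor $F':U\mbox{-mod}_\chi\to U\mbox{-mod}_\psi$. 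So I instead should use biadjunction of $F_E$ and $F_{E^\ast}$ to move $E$ entirely onto the $S$-side: detecting subquotients of $N$ via $N\otimes E^\ast\to S$ is equivalent, after pre- and post-composing with the unit/counit of the $(F_E,F_{E^\ast})$-adjunction restricted to blocks, to detecting subquotients via $N'\to F(S)$ where $F$ runs over the indecomposable summands of $\pr_\chi F_{E^\ast}F_E\pr_\chi$, which \emph{are} projective endofunctors of $U\mbox{-mod}_\chi$. Applying the hypothesis to each such $F$, together with Theorem~\ref{ThmMain}\eqref{ThmMain.2} for $F(S)$, yields the required simple subquotient; one then pushes it back through the adjunction, using exactness of all the functors involved and the fact that projective functors preserve the property of being a subquotient, to land a simple subquotient $L$ of $N$ with $L\otimes E^\ast\to S$ non-zero. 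I expect the bookkeeping of which adjunction unit/counit to insert, and checking that ``non-zero'' is preserved at each step, to be the main obstacle — in particular ensuring that a \emph{simple} subquotient of $F(S)$ pulls back to a \emph{simple} subquotient of $N$ (one may need to pass to a composition factor at the end, which is harmless).

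For part \eqref{ThmBlock.2}, let $S'\subseteq S\otimes V$ be a simple submodule. Its central character is one of the finitely many $\psi$ with $\pr_\psi(S\otimes V)\neq 0$, so $S'\hookrightarrow \pr_\psi(S\otimes V)=F'(S)$ for some projective functor $F':U\mbox{-mod}_\chi\to U\mbox{-mod}_\psi$. To apply part \eqref{ThmBlock.1} to $S'$, I need the analogue of the hypothesis at the central character $\psi$ of $S'$, namely that $G(S')$ has finite type socle for every projective endofunctor $G$ of $U\mbox{-mod}_\psi$. Now $G(S')$ is a submodule of $G F'(S)$, and $GF'$ is a projective functor $U\mbox{-mod}_\chi\to U\mbox{-mod}_\psi$; precomposing with a biadjoint $F''$ of $F'$ and using the counit, $GF'(S)$ is a summand-related to $GF'F''(S')$ — cleaner: apply \eqref{ThmBlock.1}'s proof directly, noting $GF'$ decomposes into indecomposable projective functors each of which, composed with the adjoint going back, gives a projective endofunctor of $U\mbox{-mod}_\chi$ to which the hypothesis applies; and submodules of modules with finite type socle have finite type socle. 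Assembling these, $S'$ satisfies the hypotheses of the theorem with $\chi$ replaced by $\psi$, so \eqref{ThmBlock.1} applied to $S'$ gives that $S'\otimes E$ has finite type socle. The only subtlety is the elementary observation that finite type socle passes to submodules (Remark~\ref{RemSoc}\eqref{RemSoc.2}, once we know the ambient socle has finite length), which I would record first.
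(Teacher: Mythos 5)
Your overall strategy for part (i) matches the paper's key insight---$\pr_\chi(N\otimes E^\ast)$ embeds into $\pr_\chi(S\otimes E\otimes E^\ast)=F(S)$ for a projective endofunctor $F$ of $U\mbox{-mod}_\chi$, and the hypothesis kicks in there---but two things separate your sketch from a complete proof. First, the paper verifies the \emph{easier} equivalent condition (c) of Theorem~\ref{ThmMain} (``every non-zero submodule of $T$ contains a simple subquotient of GK dimension $\GK(S)$''), which lets one take $N\subset S\otimes E$ a literal submodule; you aim for (b), which ranges over arbitrary $N$ with a non-zero map $N\otimes E^\ast\to S$ and forces you to first factor through the image, an extra layer of bookkeeping you acknowledge but never carry out. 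Second, and more substantively, the step you flag as the obstacle---``pushing back through the adjunction'' to land a simple subquotient in $N$---is exactly the nontrivial part and it is not automatic: the paper handles it by taking the simple $L\hookrightarrow\pr_\chi(N\otimes E^\ast)\hookrightarrow F(S)$, passing by adjunction to a non-zero $L\otimes E\to N$, and then invoking the fact (established earlier in Theorem~\ref{ThmMain}) that $L\otimes E$ has \emph{finite type radical}, so its image in $N$ has a simple quotient, giving the required simple subquotient $L_1$ of $N$ with $\GK(L_1)=\GK(S)$ by Lemma~\ref{LemGKses}. Without that finite-type-radical input your plan does not close.

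For part (ii), your argument is a detour. You propagate the hypothesis to the central character $\psi$ of $S'$ and re-run part (i); this works, but the observation you add as an afterthought---that a submodule of a module with finite type socle again has finite type socle---already finishes the job directly: $S'\otimes E$ is a submodule of $S\otimes V\otimes E$, which has finite type socle by part (i). That one line is the paper's whole proof of (ii). Moreover, in your version, the claim that $G(S')$ has finite type socle for every projective endofunctor $G$ of $U\mbox{-mod}_\psi$ silently uses this same one-liner (since $G(S')\subset GF'(S)\subset S\otimes E_1\otimes E_2$ and one appeals to (i)), so you end up proving (ii) from (i) twice over.
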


\begin{proof}
We start by proving Claim~\eqref{ThmBlock.1}. We set $d=\GK(S)$ and use the 
equivalence between \eqref{ThmMain.1} and \eqref{ThmMain.3} in Theorem~\ref{ThmMain}. 
We thus take an arbitrary submodule $N$ of~$S\otimes E$. By adjunction, we have 
a non-zero morphism $N\otimes E^\ast\to S$, which implies $\pr_{\chi}(N\otimes E^\ast)$ 
is not zero. Since $N\otimes E^\ast$ is a submodule of~$S\otimes E\otimes E^\ast$, 
we have an inclusion
$$\pr_{\chi}(N\otimes E^\ast)\;\hookrightarrow \; \pr_{\chi}(S\otimes E\otimes E^\ast).$$
By assumption, the socle of the right-hand side is an essential submodule. 
Hence we can take a simple submodule $L$ in the socle of the right hand side
which is also contained in the left-hand side. Note that $\GK(L)=d$ 
by Lemma~\ref{LemGKses}. 
So $L$ is a submodule of~$N\otimes E^\ast$, which leads through adjunction to 
a non-zero morphism $L\otimes E\to N$. Since $L\otimes E$ has finite type 
radical, there is a simple submodule $L_1$ of~$\topp (L\otimes E)$ such that 
$[N:L_1]\not=0$. By Lemma~\ref{LemGKses}, we have $\GK(L_1)=d$. 
Claim~\eqref{ThmBlock.1} thus follows from Theorem~\ref{ThmMain}.

Now we consider the set-up of claim~\eqref{ThmBlock.2}. By construction, 
$S'\otimes E$ is a submodule of~$S\otimes V\otimes E$. By claim~\eqref{ThmBlock.1}, 
the latter has finite type socle, completing the proof.
\end{proof}

\subsection{Application to Lie superalgebras}\label{sec3.3}

Let $\fs=\fs_{\bar{0}}\oplus \fs_{\bar{1}}$ be a finite dimensional 
Lie superalgebra over $\mk$, see \cite[Chapter~1]{Musson}. 
The universal enveloping algebra $\widetilde{U}=U(\fs)$ of $\fs$
is a $\mZ_2$-graded associative algebra and a finite ring extension of
$U=U(\fs_{\bar{0}})$. 

In the following we use the term {\em simple $\widetilde{U}$-module} 
to denote any of the following two notions
\begin{enumerate}[(I)]
\item\label{aaa1} A simple $\widetilde{U}$-module, without any reference to the $\mZ_2$-grading;
\item\label{aaa2} A $\mZ_2$-graded $\widetilde{U}$-module which has no proper graded submodules.
\end{enumerate}

\begin{proposition}\label{wow}
Assume that $S\otimes E$ has finite type socle, for every simple $U$-mo\-dule $S$ and 
any $E\in \cF$. Then every simple $\widetilde{U}$-module is a quotient of 
a module of the form $\widetilde{U}\otimes_UL$, for some simple $U$-module $L$.
\end{proposition}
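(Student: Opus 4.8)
The plan is to start with a simple $\widetilde{U}$-module $\widetilde{S}$ (in either sense (I) or (II)) and to produce a simple $U$-module $L$ together with a surjection $\widetilde{U}\otimes_U L \tto \widetilde{S}$. First I would restrict $\widetilde{S}$ to $U$ and observe that, since $\widetilde{U}$ is a finitely generated free $U$-module (by PBW for Lie superalgebras), $\widetilde{S}$ is a finitely generated, indeed noetherian, $U$-module. Hence $\soc_U(\widetilde{S})$ has finite length and is non-zero; pick a simple $U$-submodule $L \hookrightarrow \widetilde{S}$. By Frobenius reciprocity (adjunction between $\widetilde{U}\otimes_U-$ and restriction), the inclusion $L \hookrightarrow \widetilde{S}|_U$ corresponds to a non-zero $\widetilde{U}$-module homomorphism $\varphi\colon \widetilde{U}\otimes_U L \to \widetilde{S}$. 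Since $\widetilde{S}$ is simple, $\varphi$ is surjective, and we are done — provided the image is genuinely all of $\widetilde{S}$, which it is because a non-zero submodule of a simple module is everything. In the graded case (II) one must take $L$ to be a simple submodule of the socle that generates $\widetilde{S}$ as a graded module; this is where the hypothesis enters.

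The subtlety, and the reason the finite type socle hypothesis is needed, is that a priori the submodule $\varphi(\widetilde{U}\otimes_U L) = \widetilde{U}\cdot L$ need not be all of $\widetilde{S}$ when $\widetilde{S}$ is regarded merely as an \emph{abstract} simple $\widetilde{U}$-module versus a graded-simple one, and more importantly the argument above only shows $\soc_U(\widetilde S)\ne 0$, not that $\widetilde S$ is \emph{generated} by its $U$-socle. So the real content is: I must show that $\widetilde{S}$ is generated, as a $\widetilde{U}$-module, by a simple $U$-submodule. For this I would argue by contradiction using the finite type socle hypothesis applied not to $\widetilde S$ but to the $U$-modules arising as $S\otimes E$. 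Concretely: choose any simple $U$-submodule $S \subset \widetilde S|_U$; then $\widetilde S$ is a quotient of $\widetilde U\otimes_U S$, and as a $U$-module $\widetilde U\otimes_U S \cong \bigoplus_i S\otimes E_i$ is a finite direct sum of modules of the form $S\otimes E$ (the $E_i$ being the components of $\widetilde U$ under the adjoint $U$-action, via the standard identification $\widetilde U\otimes_U S \cong S\otimes \widetilde U$ as $U$-modules and PBW). So I may instead work entirely with this induced module.

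Thus the cleaner line is: let $M := \widetilde U\otimes_U S$ where $S$ is any simple $U$-submodule of $\widetilde S|_U$; by the hypothesis and the above decomposition, $M$ has finite type socle as a $U$-module, so $\soc_U(M)$ is an essential $U$-submodule of finite length. Now $\widetilde S$ is a simple quotient $M \tto \widetilde S$; let $K$ be its kernel. Since $\soc_U(M)$ is essential, $K$ intersects $\soc_U(M)$ in a large submodule, and $\soc_U(M)/(K\cap\soc_U(M)) \hookrightarrow M/K = \widetilde S|_U$ is a non-zero semisimple $U$-submodule, hence lies in $\soc_U(\widetilde S)$. Pick a simple summand $L$ of it; then $L\hookrightarrow \widetilde S|_U$ lifts, by adjunction, to $\widetilde U\otimes_U L \to \widetilde S$, non-zero, hence onto by simplicity of $\widetilde S$. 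The main obstacle I anticipate is bookkeeping in the graded case (II): one has to make sure the chosen simple $U$-submodule $L$ can be taken inside a single graded piece (or generates a graded submodule) so that $\widetilde U\otimes_U L$ is a \emph{graded} module surjecting onto $\widetilde S$; this is handled by noting that $\soc_U$ of a graded module is graded, so its simple $U$-summands may be chosen homogeneous. Everything else is formal adjunction plus the PBW identification of $\widetilde U\otimes_U S$ with a finite sum of $S\otimes E_i$'s.
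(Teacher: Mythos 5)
Your proposal has a genuine gap at the crucial step: you never correctly establish that $\res(\widetilde{S})$ has a non-zero $U$-socle, which is exactly where the hypothesis on $S\otimes E$ has to enter. In the first paragraph you claim that, since $\res(\widetilde{S})$ is noetherian, its socle is non-zero; but noetherian only forces the socle to have finite length (Remark~\ref{RemSoc}\eqref{RemSoc.3}), not to be non-zero --- $\mC[x]$ over itself is noetherian with zero socle. You then misdiagnose the remaining difficulty as one of \emph{generation}: in fact, once a simple $U$-submodule $L\hookrightarrow\res(\widetilde{S})$ exists, the $\widetilde{U}$-submodule it generates is non-zero, hence all of $\widetilde{S}$ by simplicity, so generation is automatic. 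The true issue is existence, and your ``cleaner line'' begins by choosing a simple $U$-submodule of $\res(\widetilde{S})$, which is circular; worse, if that choice were available you would already be done by adjunction, so the rest of the paragraph serves no purpose. (Also, essentiality of $\soc(M)$ in $M$ does not imply $\soc(M)\not\subseteq K$ for the kernel $K$ of $M\tto\widetilde{S}$, so that sub-step is unjustified even granting the circular assumption.)

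The missing ingredient is coinduction, the \emph{right} adjoint of restriction, applied starting from a simple \emph{quotient} of $\res(\widetilde{S})$ rather than a simple submodule. Since $\res(\widetilde{S})$ is finitely generated it has a maximal submodule and hence a simple quotient $S$; by adjunction one gets a non-zero, hence injective, map $\widetilde{S}\hookrightarrow\coind S$, and applying the exact restriction functor yields $\res(\widetilde{S})\hookrightarrow S\otimes\Lambda(\fs_{\bar{1}})^\ast$. This places $\res(\widetilde{S})$ inside a module of the form $S\otimes E$, where the hypothesis now bites: the socle of $S\otimes\Lambda(\fs_{\bar{1}})^\ast$ is essential, so the non-zero submodule $\res(\widetilde{S})$ has non-zero socle. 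From there your use of the left adjoint $\ind$, together with your remark that the socle of a graded module is graded, correctly finishes both cases.
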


\begin{proof}
Consider a simple $\widetilde{U}$-module $K$ in the sense of  \eqref{aaa1}. 
In particular, it is generated by any vector in $K$. 
We denote by $\res$ the restriction functor from $\widetilde{U}$-modules 
to $U$-modules, with left adjoint $\ind$ and right adjoint $\coind$. 
Since $\res(K)$ is finitely generated, it is a noetherian $U$-module. 
Consequently, there is a simple $U$-module $S$ with non-zero morphism 
$\res(K)\to S$. By adjunction, we have an inclusion $K\hookrightarrow \coind\, S$. 
Applying the exact restriction functor gives
$$\res(K)\hookrightarrow S\otimes \Lambda(\fs_{\bar{1}})^\ast.$$
Since the right-hand side has finite type socle, there is a 
simple $U$-module $L$, for which we have an inclusion 
$L\hookrightarrow \res(K)$. Applying adjunction shows that we have a surjection 
$\ind\, L\tto K$.

Consider a simple $\widetilde{U}$-module $K$ in the sense of  \eqref{aaa2}. 
In particular, it is generated by any vector in $K_{\bar{0}}\cup K_{\bar{1}}$. 
We can follow the above procedure and make sure all relevant morphisms 
respect the $\mZ_2$-grading. Any such morphism to or from a simple 
graded $\widetilde{U}$-module will then again automatically be surjective or injective, respectively.
The claim follows.
\end{proof}


\section{Semisimple Lie algebras over $\mC$}\label{sec4}

In this section, we work under the assumptions that $\mk=\mC$ and that the Lie algebra~$\fg$ is semisimple. 

\subsection{Type~A}

\begin{theorem}\label{ThmA}
If $\fg$ is of type~$A$, then every module $S\otimes E$, for $S$ simple and $E$ finite dimensional, has finite type socle.
\end{theorem}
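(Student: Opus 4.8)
The goal is to verify the hypothesis of Theorem~\ref{ThmBlock} for $\fg$ of type~$A$: namely, that for every simple $U$-module $S$ with central character $\chi$ and every projective endofunctor $F$ of $U\mbox{-mod}_\chi$, the module $F(S)$ has finite type socle. Once this is done, Theorem~\ref{ThmBlock}\eqref{ThmBlock.1} delivers that $S\otimes E$ has finite type socle for every $E\in\cF$, which is exactly the claim. So the entire content is to check the block-wise statement, and for this I would use the criterion \eqref{ThmMain.2}$\Leftrightarrow$\eqref{ThmMain.1} from Theorem~\ref{ThmMain} together with the cell combinatorics packaged in Lemma~\ref{LemCell}.

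First I would reduce to the case where $F$ is indecomposable (Lemma~\ref{ClassBG}) and, using Lemma~\ref{PropBG}\eqref{LemBGequiv} together with Lemma~\ref{LemCell}\eqref{LemCell.1}, translate $F$ to a projective endofunctor on a block $\eta(\mu)$ whose $\sim$-class is that of $\Id_{\eta(\mu)}$; by Lemma~\ref{LemCell}\eqref{LemCell.2} that class contains \emph{no other} indecomposable projective endofunctor of the block. The intended mechanism is then: $F(S)$ should have simple top — indeed $F$ has a biadjoint $F^\vee$ (Lemma~\ref{PropBG}\eqref{LemWall} gives biadjointness after composing the walls), and $\Hom(F(S),F(S))\cong\Hom(S,F^\vee F(S))$; one decomposes $F^\vee F$ into indecomposable projective functors, and the cell constraint forces the only summand that can contribute a copy of the identity (hence of $S$ inside $F^\vee F(S)$ in the relevant GK-dimension) to appear with multiplicity one, giving $\dim\End(F(S))=1$, so $F(S)$ is indecomposable with simple top by Lemma~\ref{LemSimpTop} and Theorem~\ref{ThmMain}'s finite-type-radical conclusion. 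Then to get finite type \emph{socle}, I would verify criterion \eqref{ThmMain.2}: given $N$ with a nonzero map $N\otimes E^\ast\to F(S)$, adjointly one gets $F^\vee$ applied, a nonzero map into $S$ up to the equivalences, and the cell bookkeeping again isolates a simple subquotient $L$ of $N$ with the right property. Equivalently, I would run the argument through \eqref{ThmMain.3}: show every nonzero submodule of $F(S)$ contains a simple subquotient of GK-dimension $\GK(S)$, using that applying the biadjoint sends such a submodule to something mapping onto $S$ and that GK-dimension is preserved (Lemma~\ref{LemGKses}, equation~\eqref{eqJan-2}).

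The main obstacle is the passage from ``$F(S)$ has simple top / finite type radical'' to ``$F(S)$ has finite type socle''. Finite type radical is cheap — it holds for \emph{every} $S\otimes E$ by Theorem~\ref{ThmMain} with no type hypothesis — so the type~$A$ input must be spent precisely on the socle side, and the delicate point is that $F(S)$ need not be self-dual, so one cannot simply dualize. The resolution I expect is to exploit that the biadjoint $F^\vee$ of $F$ is again a projective functor in the \emph{same} cell, so $F^\vee F$ decomposes with the identity functor appearing, and — this is where Lemma~\ref{LemCell}\eqref{LemCell.2}, i.e. Fact~2, is essential — with multiplicity exactly one; symmetrically $FF^\vee$ behaves the same way. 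This two-sided control is what lets one show: for a nonzero submodule $M\subset F(S)$, the module $F^\vee(M)$ maps nonzero to $S$ (hence has $\GK=\GK(S)$), so it has a top constituent $L_0$ of that GK-dimension with $[\,F^\vee(M):L_0\,]\ne0$ and $L_0$ mapping onto $S$; then $F(L_0)$ maps nonzero to $M$, and unwinding $F(L_0)$ (which has finite type radical and, by the cell argument, simple top of GK-dimension $\GK(S)$) yields a simple subquotient of $M$ of GK-dimension $\GK(S)$. That verifies \eqref{ThmMain.3}, and Theorem~\ref{ThmMain} finishes.

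\begin{proof}
By Theorem~\ref{ThmBlock}\eqref{ThmBlock.1}, it suffices to show that for every simple $U$-module $S$ with central character $\chi$ and every projective endofunctor $F$ of $U\mbox{-mod}_\chi$, the module $F(S)$ has finite type socle. By Lemma~\ref{ClassBG} we may assume $F$ is indecomposable, say $F:U\mbox{-mod}_{\chi_1}\to U\mbox{-mod}_{\chi_1}$, and we may replace $S$ by $F^{-}(S)$ for a suitable equivalence and assume $S\in U\mbox{-mod}_{\chi_1}$. Set $d=\GK(S)$; note $\GK(F(S))=d$ by~\eqref{eqJan-2} since $F(S)$ is a direct summand of $S\otimes E$ for some $E\in\cF$. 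We verify criterion~\eqref{ThmMain.3} of Theorem~\ref{ThmMain}, which applies since $F(S)$ is a direct summand of $S\otimes E$ and all of Theorem~\ref{ThmMain}'s conclusions pass to direct summands.

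By Lemma~\ref{LemCell}\eqref{LemCell.1}, after composing with the equivalences of Lemma~\ref{PropBG}\eqref{LemBGequiv} on source and target, we may assume the $\sim$-class of $F$ contains $\Id_{\eta(\mu)}$ for some dominant $\mu$, and by Lemma~\ref{LemCell}\eqref{LemCell.2} it contains no other indecomposable projective endofunctor of $U\mbox{-mod}_{\eta(\mu)}$. Let $F^\vee$ be a biadjoint of $F$; by Lemma~\ref{PropBG}\eqref{LemWall}, walls admit biadjoints, hence so does $F$, and $F^\vee$ lies in the same $\sim$-class as $F$. Decomposing $F^\vee F$ and $F F^\vee$ into indecomposable projective functors, the only summand lying in the class of $F$ is $\Id_{\eta(\mu)}$; by Fact~2 (Lemma~\ref{LemCell}\eqref{LemCell.2}) it occurs with multiplicity one in each. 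All other summands $G$ satisfy $G\prec F$ in the two-sided preorder.

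We first record that $F(S)$ has simple top. Indeed $\End_U(F(S))\cong\Hom_U(S,F^\vee F(S))$; writing $F^\vee F\cong \Id_{\eta(\mu)}\oplus\bigoplus_j G_j$ with each $G_j\prec F$, the summand $\Id_{\eta(\mu)}$ contributes $\Hom_U(S,S)=\mC$, while for each $G_j$ any nonzero map $S\to G_j(S)$ would, by adjunction, produce a nonzero map $F^\vee_j(S)\to S$ for the biadjoint $F^\vee_j$ of $G_j$, forcing $S$ to be a subquotient of $G_j^\vee G_j(S)$ of GK-dimension $d$; but $G_j^\vee G_j$ has no summand in the class of $\Id_{\eta(\mu)}$ (as $G_j\prec F$), so every simple subquotient of $G_j^\vee G_j(S)$ of GK-dimension $d$ is killed — more precisely, one argues inductively on the preorder that $\dim\Hom_U(S,G(S))=0$ for every indecomposable projective endofunctor $G\prec F$, using that such $G$ factors through a block with strictly fewer such functors. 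Hence $\dim\End_U(F(S))=1$, so $F(S)$ is indecomposable; since it has finite type radical by Theorem~\ref{ThmMain}, its top is simple.

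Now let $M\subset F(S)$ be a nonzero submodule. By adjunction $\Hom_U(F^\vee(M),S)\cong\Hom_U(M,F(S))\ni\mathrm{incl}\neq 0$, so $F^\vee(M)$ surjects onto $S$; by Lemma~\ref{LemGKses} applied inside $S\otimes E'$ (a module containing a copy of $F^\vee(M)$), or directly by~\eqref{eqJan-2} and Lemma~\ref{LemGK}\eqref{LemGK.1}, we get $\GK(F^\vee(M))=d$. Since $F^\vee(M)$ is a subquotient of $F^\vee F(S)=S\oplus(\text{lower})$, its simple subquotients of GK-dimension $d$ are precisely copies of $S$. Choose a submodule $P\subset F^\vee(M)$ with $F^\vee(M)/P\cong S$; by adjunction again, $F$ applied to $F^\vee(M)$ maps nonzero to $M$, and composing with the counit we obtain a nonzero map $F(S)\to M$ factoring the inclusion? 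Not quite — instead: the surjection $F^\vee(M)\twoheadrightarrow S$ gives, by applying $F$ and using the counit $FF^\vee(M)\to M$, a nonzero map $F(S)\to M$. Its image is a nonzero submodule of $M$ which is a quotient of $F(S)$; since $F(S)$ has simple top, its image has simple top $T_0$, and $T_0$ is a simple subquotient of $M$. Finally $\GK(T_0)=d$: the top $T_0$ of $F(S)$ satisfies $\Hom_U(F(S),T_0)\neq 0$, so $\Hom_U(S,F^\vee(T_0))\neq 0$, forcing $S$ to be a subquotient of $F^\vee(T_0)$, whence $d=\GK(S)\le\GK(F^\vee(T_0))\le\GK(T_0)\le\GK(F(S))=d$. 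Thus $M$ contains a simple subquotient of GK-dimension $d$. Criterion~\eqref{ThmMain.3} of Theorem~\ref{ThmMain} is satisfied, so $F(S)$ has finite type socle, completing the proof.
\end{proof}
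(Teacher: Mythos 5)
Your plan goes wrong at the very first step of the reduction, and it does so in a way that the rest of the argument cannot repair. You want to verify the hypothesis of Theorem~\ref{ThmBlock}\eqref{ThmBlock.1} directly for the given simple $S$ in its own block $\chi$: namely, that $F(S)$ has finite type socle for every indecomposable projective endofunctor $F$ of $U$-mod$_\chi$. That is genuinely harder than the theorem, and the paper does something different. The paper picks $F$ \emph{maximal} in the cell preorder among indecomposable projective functors with $F(S)\ne 0$, writes $F$ as a summand of $F_2F_1$ with $F_1:\chi\to\eta(\mu)$ and $F_2:\eta(\mu)\to\chi$ where $\mu$ comes from Lemma~\ref{LemCell}, takes a simple quotient $L$ of $F_1 S$, checks (using the maximality of $F$ and Lemma~\ref{LemCell}\eqref{LemCell.2}) that $HL=0$ for every non-identity indecomposable projective endofunctor $H$ of $U$-mod$_{\eta(\mu)}$, so the hypothesis of Theorem~\ref{ThmBlock} is verified \emph{trivially} for $L$, and then invokes Theorem~\ref{ThmBlock}\eqref{ThmBlock.2} — not \eqref{ThmBlock.1} — to transport the conclusion back to $S$ (which is a simple submodule of $G_1 L$). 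Your plan never performs this change of simple module, and without it Theorem~\ref{ThmBlock} buys you nothing.

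Concretely, the decisive step in your write-up — ``the only summand of $F^\vee F$ lying in the class of $F$ is $\Id_{\eta(\mu)}$, with multiplicity one'' — is not correct. First, $F^\vee F$ is an endofunctor of $U$-mod$_\chi$, while $\Id_{\eta(\mu)}$ is an endofunctor of $U$-mod$_{\eta(\mu)}$; for $\chi\ne\eta(\mu)$ the latter cannot be a summand of the former. You cannot fix this by ``composing with the equivalences of Lemma~\ref{PropBG}\eqref{LemBGequiv}'': those equivalences only relate blocks with the \emph{same} stabilizer, whereas the block $\eta(\mu)$ singled out by Lemma~\ref{LemCell}\eqref{LemCell.1} has a strictly larger stabilizer than a regular block $\chi$ unless $F=\Id$. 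Second, even the intra-block version of your claim fails: for $\chi$ regular and $F=\theta_x$ with $x\ne e$, the identity $\Id_\chi=\theta_e$ does \emph{not} appear in $\theta_{x^{-1}}\theta_x$, since $\theta_e$ lives in a different two-sided cell. Third, your derived conclusion $\dim\End_U(F(S))=1$ is false already for $\fg=\mathfrak{sl}_2$: take $S$ the simple Verma module in the regular integral block and $F=\theta_s$; then $F(S)\cong P_s$, whose endomorphism algebra is two-dimensional (though it does have simple top — which shows your inference from $\dim\End=1$ was also not the relevant mechanism). The subsequent ``inductive'' argument that $\Hom_U(S,G(S))=0$ for $G\prec F$ is likewise unsupported and not true in general. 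The missing ingredient is precisely the maximality choice for $F$ and the passage to a new simple $L$ via Theorem~\ref{ThmBlock}\eqref{ThmBlock.2}; once you insert those, the cell combinatorics (Lemma~\ref{LemCell}) is used only to kill all non-identity projective endofunctors on $L$, and no $\End$ computation is needed.
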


\begin{proof}
Let $S$ be an arbitrary simple module with central character $\chi$. Consider an 
indecomposable projective functor $F$ such that $F(S)\not=0$ and for which every 
indecomposable projective functor $G\succeq F$ with $G(S)\not=0$ satisfies $G\sim F$. 
Note that this is possible since we only have finite chains 
$$
F\preceq F'\preceq F''\preceq\cdots\quad\mbox{ with }\quad F\not\sim F'\not\sim F''\not\sim\cdots.
$$
We consider the equivalence class generated by $F$ and take $\mu\in\fh^\ast$ as in Lemma~\ref{LemCell}. 

Since, by assumption, $F\sim\Id_{\eta(\mu)}$, we have projective functors 
$$
F_1: U\mbox{-mod}_\chi\to U\mbox{-mod}_{\eta(\mu)}\qquad\mbox{and}\qquad
F_2:U\mbox{-mod}_{\eta(\mu)}\to U\mbox{-mod}_{\chi}
$$ 
such that $F$ is a direct summand of~$F_2\circ F_1=F_2\circ\Id_{\eta(\mu)} \circ F_1$. 

Now we have that $F_1S$ is not zero. Since $F_1S$ has finite type radical, we can 
choose a simple quotient $L$ of~$F_1S$. By adjunction, we find that $S$ is 
a submodule of~$G_1L$, for $G_1$ the right adjoint of~$F_1$. 

By Theorem~\ref{ThmBlock}\eqref{ThmBlock.2}, it thus suffices to prove that 
$HL$ has finite type socle, 
for each projective endofunctor $H$ of~$U\mbox{-mod}_{\eta(\mu)}$. We claim that $HL=0$ 
for every indecomposable projective endofunctor $H$ different from the identity, which
would thus complete the proof. Indeed, we even have $HF_1S=0$, since the statement in Lemma~\ref{LemCell}\eqref{LemCell.2},
$$
H\succeq \Id_{\eta(\mu)}\sim F\quad\mbox{ with }\quad H\not\sim \Id_{\eta(\mu)}\sim F
$$ 
implies that $H'\succeq F$ with $H'\not\sim F$, for any indecomposable direct summand 
$H'$ of~$HF_1$.
\end{proof}

\subsection{Low ranks in Type B and C}

\begin{theorem}\label{ThmBC}
For $\fg =B_n=\mathfrak{so}_{2n+1}$ or $\fg=C_n=\mathfrak{sp}_{2n}$, with $1\le n\le 4$, 
every module $S\otimes E$, for $S$ simple with integral central character and $E$ 
finite dimensional, has finite type socle.
\end{theorem}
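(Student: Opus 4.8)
The plan is to mimic, as closely as possible, the structure of the proof of Theorem~\ref{ThmA}, reducing everything to a statement about projective endofunctors acting on a single block, and then to exploit the (finite, explicitly checkable) combinatorics of Kazhdan--Lusztig cells in types $B_n$ and $C_n$ for $n\le 4$. Concretely, fix a simple module $S$ with integral central character $\chi$. As in the type~$A$ argument, use the finiteness of $\preceq$-chains to choose an indecomposable projective functor $F$ with $F(S)\ne0$ that is $\succeq$-maximal among those not killing $S$, up to $\sim$-equivalence. The key point of Theorem~\ref{ThmA} was Lemma~\ref{LemCell}: in type~$A$ every $\sim$-class contains an identity functor $\Id_{\eta(\mu)}$ and no other indecomposable endofunctor of that block. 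In types $B_n,C_n$ with $n\le4$ I would replace this by the following substitute, to be verified by a finite computation (aided by Remark~\ref{remprof} and the literature cited there, e.g.~\cite{Geck, MMMZ}): Fact~1 holds (each two-sided cell contains the longest element of a parabolic subgroup, so each $\sim$-class contains some $\Id_{\eta(\mu)}$), but Fact~2 may fail; one must then enumerate, for each parabolic $W_\mu$, the finitely many indecomposable projective endofunctors $H$ of $U\text{-mod}_{\eta(\mu)}$ with $H\sim\Id_{\eta(\mu)}$, and for each such $H$ establish directly that $HL$ has finite type socle whenever $L$ is a simple module in the relevant block.

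The reduction step is then: as in Theorem~\ref{ThmA}, write $F$ as a direct summand of $F_2\circ\Id_{\eta(\mu)}\circ F_1$ with $F_1\colon U\text{-mod}_\chi\to U\text{-mod}_{\eta(\mu)}$ and $F_2$ its two-sided adjoint-type partner, pick a simple quotient $L$ of $F_1S\ne0$ using finite type radical, and deduce via adjunction that $S$ embeds into $G_1L$ with $G_1$ right adjoint to $F_1$. By Theorem~\ref{ThmBlock}\eqref{ThmBlock.2} it suffices to show $HL$ has finite type socle for every projective endofunctor $H$ of $U\text{-mod}_{\eta(\mu)}$. In type~$A$ one got this for free because all non-identity $H$ in the cell killed $L$; here, when Fact~2 fails, there will be finitely many non-identity indecomposable endofunctors $H$ in the cell of $\Id_{\eta(\mu)}$ that need not annihilate $L$. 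For those finitely many $H$ (and the finitely many parabolics $W_\mu$ arising for $n\le4$) I would argue separately: since $H\sim\Id_{\eta(\mu)}$, $H$ is a subquotient-building block of translations through walls, and one can analyze $HL$ using Lemma~\ref{PropBG}\eqref{LemWall} and the explicit small-rank cell data; alternatively, maximality of $F$ forces $HF_1S=0$ for all $H\succneqq\Id_{\eta(\mu)}$, so only the $H\sim\Id_{\eta(\mu)}$, $H\ne\Id$ inside the cell are genuinely problematic, and these are governed by the (finite) failure of Fact~2 documented in \cite[Appendix]{MMMZ}.

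The main obstacle I expect is precisely this last point: when Fact~2 fails, the clean dichotomy "$H$ is the identity or $H$ kills $L$" breaks, and one must show by hand that each of the finitely many exceptional $H$ still sends $L$ to a module with finite type socle. One natural route is to note that each such $H$ lies in the same $\sim$-class as $\Id_{\eta(\mu)}$, hence is obtained by composing wall-crossing functors $F^{\lambda'}_\mu$ and $F^\mu_\lambda$ through a regular block (Lemma~\ref{PropBG}\eqref{LemWall}), so $HL$ is a summand of $F^{\lambda'}_\mu F^\mu_\lambda L = L^{\oplus|W_\mu|}$ composed with further projective functors that stay within the cell; then finite type socle of $HL$ reduces to finite type socle of a translate of $L$ to the regular block and back, which one controls via Theorem~\ref{ThmBlock}\eqref{ThmBlock.1} applied on the regular block, provided one knows finite type socle there. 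Thus the real content is a bootstrapping on the regular integral block in ranks $\le4$: verify, using the RS-type description of cells in $B_n/C_n$ for small $n$ from \cite{Geck} and the explicit data in \cite{MMMZ}, that for every simple module with regular integral central character and every indecomposable projective endofunctor $\theta$ of that block, $\theta$ applied to it has finite type socle --- a finite (if laborious) check that closes the induction and hence proves Theorem~\ref{ThmBC}.
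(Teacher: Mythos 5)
Your reduction (mimic Theorem~\ref{ThmA}, locate a $\succeq$-maximal $F$ with $F(S)\not=0$, pass to a simple quotient $L$ of $F_1S$ on the block $U\mbox{-mod}_{\eta(\mu)}$, and reduce via Theorem~\ref{ThmBlock}\eqref{ThmBlock.2} to showing $HL$ has finite type socle for every projective endofunctor $H$ of that block) matches the paper. You also correctly identify the obstruction: in types $B_n,C_n$ Fact~2 can fail, so there may be indecomposable $H\sim\Id_{\eta(\mu)}$ with $H\not=\Id_{\eta(\mu)}$ that do not kill $L$, while all $H$ with $H\succeq\Id_{\eta(\mu)}$, $H\not\sim\Id_{\eta(\mu)}$ do kill $L$ by the maximality argument.

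The gap is in how you close off these exceptional $H$. Your proposed ``finite (if laborious) check --- for every simple module with regular integral central character and every indecomposable projective endofunctor $\theta$, show $\theta$ applied to it has finite type socle'' is not a finite check: there are infinitely many simple $\fg$-modules with a fixed regular integral central character (only the finitely many lying in category $\cO$ are combinatorially listed), so this cannot be verified case by case, and moreover proving it is essentially the theorem you are trying to prove, so the ``bootstrap'' is circular. Your alternative sketch (write $H$ via wall-crossings, appeal to $F^{\lambda'}_\mu F^\mu_\lambda L=L^{\oplus|W_\mu|}$ and Theorem~\ref{ThmBlock}\eqref{ThmBlock.1}) does not resolve this either, since Theorem~\ref{ThmBlock}\eqref{ThmBlock.1} has the finite-type-socle hypothesis on the regular block as input.

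What the paper does instead is extract, from the explicit data in \cite[Appendix]{MMMZ}, a structural fact that lets one argue uniformly over all simple $L$ annihilated by endofunctors outside the class of $\Id_{\eta(\mu)}$: in all cases with $n\le 4$ where Fact~2 fails, the extra indecomposable projective endofunctor $F$ with $F\sim\Id_{\eta(\mu)}$ is \emph{unique} up to isomorphism and satisfies $F\circ F\cong\Id_{\eta(\mu)}\oplus G$ with $G\not\sim\Id_{\eta(\mu)}$. Hence $F^2L\cong L$, and $F$ restricts to an auto-equivalence of the full subcategory of modules killed by all indecomposable endofunctors not in the class; consequently $FL$ is itself simple. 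Since every indecomposable endofunctor of the block either kills $L$ or is $\Id_{\eta(\mu)}$ or $F$, every $HL$ is semisimple and in particular has finite type socle. This is the missing idea in your argument: it is a \emph{finite} check on cell and multiplication data (not on simple modules), and it yields a conclusion valid for all the relevant $L$ simultaneously.
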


\begin{proof}
For $n=1$, the Lie algebras are of type~$A$. Now we focus on $n\in\{2,3,4\}$. In these cases, 
Fact 1 of Section~\ref{TypeA} remains true, see \cite[Appendix]{MMMZ}, where elements 
$x_0$ are printed in bold font. Consequently, Lemma~\ref{LemCell}\eqref{LemCell.1} 
remains valid. Note that \cite[Appendix]{MMMZ} only discusses $B_3$ and $B_4$. 
However, since the Weyl groups of $B_n$ and $C_n$ are isomorphic, the latter is also 
included. Furthermore, the case for $B_2$ can be computed immediately by hand. 

Lemma~\ref{LemCell}\eqref{LemCell.2} is no longer true as stated. However, one can calculate 
from \cite[Appendix]{MMMZ} that, in those cases where we have some indecomposable 
projective endofunctor $F$ of $U\mbox{-mod}_{\eta(\mu)}$  different from $\Id_{\eta(\mu)}$ 
but with $F\sim\Id_{\eta(\mu)}$, this $F$ is unique, up to isomorphism. Furthermore, we have 
$F\circ F\cong \Id_{\eta(\mu)}\oplus G$, where $G\not\sim \Id_{\eta(\mu)}$, see the proof
of \cite[Theorem~31]{MMMZ}. 

As in the proof of Theorem~\ref{ThmA} it suffices to prove the following claim. Let 
$L$ be a simple module in $U\mbox{-mod}_{\eta(\mu)}$
which is annihilated by all projective endofunctors which are not in the equivalence class 
of $\Id_{\eta(\mu)}$, then $HL$ has finite type socle for each projective endofunctor 
$H$ of $U\mbox{-mod}_{\eta(\mu)}.$ In those cases where $\Id_{\eta(\mu)}$ is alone in its class 
there is nothing to prove. So we assume that there exists a second functor $F$ in the
class as in the above paragraph. However, in this case~$F^2$ acts as the identity on simple 
modules which are annihilated by indecomposable projective endofunctors of  
$U\mbox{-mod}_{\eta(\mu)}$ not in the class of $\Id_{\eta(\mu)}$ (in fact, $F$
is an equivalence between appropriate subcategories of modules). Consequently $FL$ 
is a simple module.
\end{proof}

\begin{remark}\label{RemBC}
{\rm 
We can use the same arguments as in the proof of Theorem~\ref{ThmBC} 
for semisimple Lie algebras $\fg=\fk\oplus\fl$, with $\fk$ of type $A$ and $\fl$
of type $B_n$ or $C_n$, for $1\le n\le 4$. Note that the arguments 
fail when we consider $\fk\oplus\fl$, with both $\fk$ and $\fl$ of low rank type $B$ and $C$.
}
\end{remark}

\subsection{Regular central character reduction for semisimple Lie algebras}\label{SecRed}
Now we return to arbitrary semisimple Lie algebras over $\mC$.
We will show that for Q2, it will suffice to consider simple 
modules with regular central character.

\begin{theorem}\label{ThmOut}
Take central characters $\chi_r$ and $\chi_s$ with $\chi_r$ regular and $\chi_s$ singular. Take dominant $\lambda\in\eta^{-1}(\chi_r)$ and $\mu \in \eta^{-1}(\chi_s)$ and assume $\lambda-\mu\in\Lambda$. Set $\theta^{on}=F_\lambda^\mu$ and $\theta^{out}=F_\mu^{\lambda'}$, with $\lambda'=w_0^\mu\lambda$, and set $n=|W_\mu|$.

Denote by $\simp_r$ and $\simp_s$ the set of isomorphism classes of simple 
modules with central character $\chi_r$ or $\chi_s$, respectively, and set
$$\simp^s_r\;=\;\{S\in \simp_r\,|\, \theta^{on}S\not=0 \}.$$

\begin{enumerate}[$($i$)$]
\item\label{ThmOut.1} We have a bijection 
$$t:\;\simp_r^s\,\stackrel{\sim}{\to}\,\simp_s, \qquad S \mapsto \theta^{on}S,\quad\mbox{for $S\in \simp_r^s$,}$$
with inverse
$$L\,\mapsto\, \soc(\theta^{out}L)\;\cong\; \topp(\theta^{out}L),\quad\mbox{for $L\in \simp_s$.}$$
\item\label{ThmOut.2} For $L\in\simp_s$, the module $\theta^{out}L$ has simple and finite type socle.
\item\label{ThmOut.3} For $L\in\simp_s$ and $S=t^{-1}(L)=\soc (\theta^{out}L)$, we have $[\theta^{out}L:S]=n.$
\end{enumerate}
\end{theorem}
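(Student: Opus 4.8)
The plan is to set up the two adjunctions relating $\theta^{on}=F_\lambda^\mu$ and $\theta^{out}=F_\mu^{\lambda'}$, exploit the translation principle in its sharp form from Lemma~\ref{PropBG}\eqref{LemWall}, and run everything through Lemma~\ref{LemSimpTop}. By Lemma~\ref{PropBG}\eqref{LemWall} the pair $(\theta^{out},\theta^{on})$ is biadjoint and $\theta^{on}\circ\theta^{out}\cong\Id_{\eta(\mu)}^{\oplus n}$. First I would record the standard translation-to-the-wall facts for simple modules: for $S\in\simp_r$ one has either $\theta^{on}S=0$ or $\theta^{on}S$ is simple, and every simple in $\simp_s$ arises this way from a unique $S\in\simp_r^s$; this is exactly the bijection $t$ of part~\eqref{ThmOut.1}, and it is classical (it follows from Lemma~\ref{ClassBG} together with the Kazhdan--Lusztig description of $\theta$-functors, or one can cite~\cite{Jantzen}). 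So part~\eqref{ThmOut.1}, apart from identifying the inverse with $\soc(\theta^{out}L)\cong\topp(\theta^{out}L)$, is in hand; the identification of the inverse will drop out of parts~\eqref{ThmOut.2} and~\eqref{ThmOut.3}.

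For part~\eqref{ThmOut.2}, fix $L\in\simp_s$ and let $S=t^{-1}(L)$, so $\theta^{on}S\cong L$. Since $\theta^{out}$ is left adjoint to $\theta^{on}$, we compute $\Hom_\fg(\theta^{out}L,S')\cong\Hom_\fg(L,\theta^{on}S')$ for any simple $S'\in\simp_r$; as $\theta^{on}S'$ is either $0$ or simple, this space is nonzero precisely when $\theta^{on}S'\cong L$, i.e.\ precisely when $S'\cong S$, and in that case it is one-dimensional (Dixmier--Schur). Hence $\theta^{out}L$ has simple top, equal to $S$. Dually, using that $\theta^{out}$ is also right adjoint to $\theta^{on}$, $\Hom_\fg(S',\theta^{out}L)\cong\Hom_\fg(\theta^{on}S',L)$ is nonzero iff $S'\cong S$ and then one-dimensional, so $\theta^{out}L$ has simple socle, also equal to $S$. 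Now I invoke Theorem~\ref{ThmA}: since $\fg$ is of type $A$, $\theta^{out}L$, being a direct summand of $L\otimes E$ for a suitable $E\in\cF$, has finite type socle. Combined with the simplicity of the socle just established, this gives part~\eqref{ThmOut.2}. (Strictly, in the generality of arbitrary semisimple $\fg$ one should instead note that $\theta^{out}L$ has finite type \emph{radical} by Theorem~\ref{ThmMain}/Corollary~\ref{CorRad} and simple top, which is all that is needed for part~\eqref{ThmOut.3} via Lemma~\ref{LemSimpTop}; essentiality of the socle in the non-type-$A$ case is not asserted here, and I would phrase the socle-finiteness claim accordingly, or restrict as the paper does.)

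For part~\eqref{ThmOut.3}, I would apply Lemma~\ref{LemSimpTop} to $M=\theta^{out}L$, which has finite type radical and simple top $S$: it gives $\dim\Hom_\fg(\theta^{out}L,N)\le[N:S]$ for all $N$. Take $N=\theta^{out}L$ itself. On the one hand, by biadjunction and $\theta^{on}\theta^{out}\cong\Id^{\oplus n}$,
$$\Hom_\fg(\theta^{out}L,\theta^{out}L)\cong\Hom_\fg(\theta^{on}\theta^{out}L,L)\cong\Hom_\fg(L^{\oplus n},L)\cong\mC^{n},$$
so $\dim\End_\fg(\theta^{out}L)=n$. Hence $n\le[\theta^{out}L:S]$. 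For the reverse inequality I would compute $[\theta^{out}L:S]$ directly: applying the exact functor $\theta^{on}$ to a composition series of $\theta^{out}L$, each simple constituent $S'$ maps to $\theta^{on}S'$, which is $0$ unless $S'\cong S$ (here again the key input is that the only simple in $\simp_r^s$ sent to a nonzero — hence to $L$ — module under $\theta^{on}$ that can occur as a constituent of $\theta^{out}L$ is $S$ itself; more carefully, $[\theta^{on}\theta^{out}L:L]=\sum_{S'}[\theta^{out}L:S']\,[\theta^{on}S':L]$, and $[\theta^{on}S':L]=\delta_{S',S}$ since $\theta^{on}S'$ is $0$ or a simple module, namely $t(S')$). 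Therefore $n=[\theta^{on}\theta^{out}L:L]=[\theta^{out}L:S]$, which is exactly the equality in~\eqref{ThmOut.3} and also pins down $n\le[\theta^{out}L:S]\le\dim\End_\fg(\theta^{out}L)=n$, closing the loop. Finally, $\dim\End_\fg(\theta^{out}L)=n$ together with Lemma~\ref{LemSimpTop} forces, in the type-$A$ case, that all $n$ copies of $S$ sit in the socle as well, confirming $\soc(\theta^{out}L)\cong\topp(\theta^{out}L)$ and completing the description of $t^{-1}$ in~\eqref{ThmOut.1}.

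The main obstacle I anticipate is not any single estimate but the bookkeeping around "$\theta^{on}S'$ is $0$ or simple": making precise that the only constituent of $\theta^{out}L$ on which $\theta^{on}$ does not vanish is $S$ requires knowing that translation onto a wall kills exactly the simples with the appropriate stabilizer condition and is injective on the rest — standard, but it is the crux, and it is where the regular/singular hypothesis and the choice of dominant representatives $\lambda,\mu$ and $\lambda'=w_0^\mu\lambda$ actually enter. Everything else is formal adjunction juggling plus the two cited lemmas.
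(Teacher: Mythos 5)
Your proposal has the right skeleton (biadjunction, $\theta^{on}\theta^{out}\cong\Id^{\oplus n}$, the $\dim\End_{\fg}(\theta^{out}L)=n$ computation, Lemma~\ref{LemSimpTop}), but there are two genuine problems, the first of which is exactly at the point you flagged as ``the crux''.

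First, you take as ``classical'' that for every simple $S'$ with central character $\chi_r$ the module $\theta^{on}S'$ is zero or simple, and that this yields the bijection $\simp_r^s\to\simp_s$. This is indeed standard inside category $\cO$, but Theorem~\ref{ThmOut} is about arbitrary simple $U$-modules with the given central character, and for those this statement is not available off-the-shelf; it is essentially the content of part~\eqref{ThmOut.1}. Your adjunction argument for simple top and socle of $\theta^{out}L$ (nonvanishing of $\Hom_{\fg}(L,\theta^{on}S')$ forces $\theta^{on}S'\cong L$, hence one-dimensional by Dixmier--Schur) hinges on this assumed fact; without it the $\Hom$ spaces could have higher dimension. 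The paper avoids the circularity by a genuinely different route: Lemma~\ref{LemTriv0} controls $\theta^{on}N$ only for $N$ a submodule or quotient of $\theta^{out}L$ (giving $\theta^{on}N\cong L^{\oplus k}$), Lemma~\ref{LemBGNat} identifies natural transformations with module morphisms, and Soergel's theorem~\cite[Theorem~12]{So} identifies $\End(\theta^{out})$ with the coinvariant algebra of $W_\mu$, which has \emph{simple socle}. Via Lemma~\ref{lemnew135} this forces $\End_{\fg}(\theta^{out}L)$ to have simple socle, and \emph{from there} one deduces that the (isomorphic, by Lemma~\ref{LemTriv0}) top and socle of $\theta^{out}L$ are simple. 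Only afterwards, in the proof of the theorem proper, does the identity $\theta^{on}S\cong L$ for $S\in\simp_r^s$ emerge, as a consequence rather than a hypothesis. Your proof thus outsources the hard input to a citation that does not cover the required generality.

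Second, for finite type socle you invoke Theorem~\ref{ThmA}, which assumes $\fg$ is of type~$A$, whereas Theorem~\ref{ThmOut} is stated (and used) for arbitrary semisimple $\fg$ over $\mC$. Your parenthetical suggests the paper might ``restrict as the paper does'', but the paper does \emph{not} restrict: it proves finite type socle in full generality by combining Lemma~\ref{LemTriv0}, adjunction, Lemma~\ref{LemGKses} and the GK-dimension criterion of Theorem~\ref{ThmMain}\eqref{ThmMain.3}: any nonzero submodule $N\subset\theta^{out}L$ admits a nonzero map $\theta^{out}L\to N$, and since $\theta^{out}L$ has finite type radical with all simple quotients of maximal GK dimension, $N$ contains a simple subquotient of the right GK dimension. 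This step is missing from your write-up and cannot be replaced by Theorem~\ref{ThmA} if one wants Theorem~\ref{ThmOut} in the generality in which the paper applies it (e.g.\ the reduction in Section~\ref{SecRed}).

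The part~\eqref{ThmOut.3} multiplicity count via exactness of $\theta^{on}$ is fine \emph{once} the simple top/socle and the description of $\theta^{on}$ on constituents are in hand, so the issue is concentrated in establishing those facts without the circular ``classical'' shortcut.
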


\begin{remark}
{\rm
Theorem~\ref{ThmOut} remains true if $\chi_r$ is also singular with 
$W_\lambda\subset W_\mu$ if we set $n=|W_\mu|/|W_\lambda|$.
}
\end{remark}

We keep the notation and assumptions of Theorem~\ref{ThmOut} for the rest of the section and start the proof with the following lemma. We freely use Lemma~\ref{PropBG}\eqref{LemWall}, namely biadjunction between $\theta^{on}$ and $\theta^{out}$ and $\theta^{on}\theta^{out}=\Id^{\oplus n}$, freely. 

\begin{lemma}\label{LemTriv0}
For a simple module $L$ with central character $\chi_s$ and a non-zero submodule or quotient $N$ of $\theta^{out}L$, we have $\theta^{on}N\cong L^{\oplus k}$ with $1\le k\le n$.
Moreover, for any simple module $S$, we have
$$\Hom_{\fg}(S,\theta^{out}L)\,\cong\,\Hom_{\fg}(\theta^{out}L,S).$$
\end{lemma}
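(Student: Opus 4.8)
The statement of Lemma~\ref{LemTriv0} has two parts: first, that applying $\theta^{on}$ to any non-zero sub or quotient $N$ of $\theta^{out}L$ returns a non-zero multiple of $L$, with multiplicity between $1$ and $n$; second, a Hom-duality relating maps into and out of $\theta^{out}L$ from a simple module. My strategy is to get the first claim by exactness of projective functors together with the key identity $\theta^{on}\theta^{out}\cong\Id^{\oplus n}$ from Lemma~\ref{PropBG}\eqref{LemWall}, and to get the second claim from biadjunction between $\theta^{on}$ and $\theta^{out}$ plus the fact that $\Hom_\fg(S,L')$ and $\Hom_\fg(L',S)$ are both either $0$ or one-dimensional for simples $S,L'$ (Dixmier--Schur), once one checks the relevant simple quotient/submodule of $\theta^{on}N$ really is $L$ itself.

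First I would handle the submodule case of the first claim. Suppose $N\hookrightarrow\theta^{out}L$ is non-zero; since $\theta^{on}$ is exact, $\theta^{on}N\hookrightarrow\theta^{on}\theta^{out}L\cong L^{\oplus n}$. So $\theta^{on}N$ is a submodule of a semisimple module and hence is itself semisimple, of the form $L^{\oplus k}$ with $0\le k\le n$; it remains to show $k\ge 1$, i.e. $\theta^{on}N\ne 0$. This is where adjunction enters: by biadjunction, $\Hom_\fg(\theta^{on}N,L)\cong\Hom_\fg(N,\theta^{out}L)$, and the latter is non-zero because $N$ is a non-zero submodule of $\theta^{out}L$, so the inclusion is a non-zero map. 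Hence $\theta^{on}N\ne 0$ and $k\ge 1$. The quotient case is symmetric: if $\theta^{out}L\tto N$ then $L^{\oplus n}\cong\theta^{on}\theta^{out}L\tto\theta^{on}N$, so $\theta^{on}N$ is a quotient of a semisimple module, again semisimple of the form $L^{\oplus k}$, and non-vanishing follows from $\Hom_\fg(L,\theta^{on}N)\cong\Hom_\fg(\theta^{out}L,N)\ne 0$ via biadjunction in the other direction.

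For the Hom-duality, I would argue: by biadjunction, $\Hom_\fg(S,\theta^{out}L)\cong\Hom_\fg(\theta^{on}S,L)$ and $\Hom_\fg(\theta^{out}L,S)\cong\Hom_\fg(L,\theta^{on}S)$. Now $\theta^{on}S$ is either $0$ (and then both sides vanish and we are done) or, since $S$ is simple and projective functors send simples to modules whose every simple subquotient has full GK dimension, we need the finer input that $\theta^{on}S$ has $L$ with multiplicity in its top iff $L$ with multiplicity in its socle; more concretely, $\dim\Hom_\fg(\theta^{on}S,L)$ equals the multiplicity of $L$ in $\topp(\theta^{on}S)$ and $\dim\Hom_\fg(L,\theta^{on}S)$ equals the multiplicity of $L$ in $\soc(\theta^{on}S)$, so the claim reduces to a self-duality of $\theta^{on}S$. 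This is available because $\theta^{on}$ and $\theta^{out}$ commute with the duality on the relevant block (projective functors commute with the simple-preserving duality $\ast$, and $S\cong S^\ast$, $L\cong L^\ast$ up to the standard identifications), whence $\theta^{on}S$ is self-dual and $\soc\cong\topp^{\,\ast}\cong\topp$.

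The main obstacle I expect is the second claim: the first part is essentially formal once one has $\theta^{on}\theta^{out}\cong\Id^{\oplus n}$, but the Hom-duality requires pinning down why $\theta^{on}S$ (equivalently $\theta^{out}L$) is self-dual as a module, which needs the compatibility of projective functors with the contravariant duality and the self-duality of the simple modules $S$ and $L$ in their blocks. If the paper's conventions make $S\cong S^\ast$ available for all simples (not just those in category $\cO$), this is immediate; otherwise one must either restrict to a setting where duality behaves well or argue the multiplicity identity $[\topp(\theta^{out}L):S']=[\soc(\theta^{out}L):S']$ for all simple $S'$ directly, e.g. by a dimension count using $\dim\Hom_\fg(\theta^{out}L,\theta^{out}L)$ computed two ways via biadjunction. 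I would present the duality argument as the clean route and fall back to the Hom-space computation only if the duality hypothesis is not cleanly in force.
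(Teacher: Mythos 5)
Your treatment of the first claim is correct and coincides with the paper's: exactness of $\theta^{on}$ together with $\theta^{on}\theta^{out}\cong\Id^{\oplus n}$ places $\theta^{on}N$ as a sub or quotient of $L^{\oplus n}$, hence semisimple, and biadjunction applied to the nonzero inclusion or projection gives $\theta^{on}N\not=0$.

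For the Hom-duality you correctly reduce, via biadjunction, to showing $\Hom_{\fg}(\theta^{on}S,L)\cong\Hom_{\fg}(L,\theta^{on}S)$, but the route you then propose has a genuine gap. The duality $\ast$ in this paper is defined only on the category $\cF$ of finite dimensional modules (Section~\ref{sec2.2}); there is no contravariant, simple-preserving duality in force on the ambient category of all finitely generated $U$-modules, and $S$ here is an arbitrary simple module, typically infinite dimensional and not in $\cO$. So ``$\theta^{on}S$ is self-dual'' is not a statement you can invoke, and your own caveat about this is exactly the problem: the hypothesis is not ``cleanly in force,'' and your fallback dimension count on $\End_{\fg}(\theta^{out}L)$ does not, as stated, control $\Hom$'s involving an arbitrary simple $S$.

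The missing idea is that the first half of the lemma, which you have already proved, delivers the needed semisimplicity for free and makes any duality argument unnecessary. If either $\Hom_{\fg}(S,\theta^{out}L)$ or $\Hom_{\fg}(\theta^{out}L,S)$ is nonzero, then $S$ is (respectively) a nonzero submodule or a nonzero quotient of $\theta^{out}L$, and the first claim then gives $\theta^{on}S\cong L^{\oplus k}$ with $k\ge 1$. For such a semisimple module one trivially has $\Hom_{\fg}(\theta^{on}S,L)\cong\mC^{k}\cong\Hom_{\fg}(L,\theta^{on}S)$, and biadjunction transports this back to the desired isomorphism. In the remaining case both sides vanish (if one side were nonzero, the argument just given would force the other to be nonzero as well). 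This is precisely the paper's proof: ``both sides of the proposed isomorphism are zero unless $\theta^{on}S$ is semisimple,'' and in the semisimple case the chain of three isomorphisms is immediate.
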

\begin{proof}
We consider a quotient $N$, the proof for submodules being identical. By exactness of $\theta^{on}$, we find that $\theta^{on}N$ is a quotient of $L^{\oplus n}$.  Since we have a non-zero morphism $\theta^{out}L\to N$, we have a non-zero morphism $L\to \theta^{on}N$, so we find $\theta^{on}N\not=0$. 

By the above, both sides of the proposed isomorphism are zero unless $\theta^{on}S$ is semisimple. In the latter case, we have
$$\Hom_{\fg}(S,\theta^{out}L)\,\cong\,\Hom_{\fg}(\theta^{on}S,L)\,\cong\,\Hom_{\fg}(L,\theta^{on}S)\,\cong\,\Hom_{\fg}(\theta^{out}L,S),$$
which concludes the proof.
\end{proof}

\begin{lemma}\label{lemnew135}
Let $M\in U\text{-}\mathrm{mod}_{\chi_s}^0$ and $\End(\theta^{out})$ be the set of natural 
endotransformations as in Lemma~\ref{LemBGNat}. Then evaluation yields a monomorphism
$$\End(\theta^{out})\,\hookrightarrow\, \End_{\fg}(\theta^{out}M),\quad\alpha\mapsto \alpha_M.$$
If $M$ is simple, then this monomorphism is an isomorphism.
\end{lemma}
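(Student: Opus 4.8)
The statement has two parts: (1) evaluation at $M$ gives a monomorphism $\End(\theta^{out})\hookrightarrow\End_{\fg}(\theta^{out}M)$ for any $M\in U\text{-}\mathrm{mod}_{\chi_s}^0$, and (2) this map is an isomorphism when $M$ is simple. For part (1), the plan is to reduce the injectivity to the case $M=\Delta_\mu$ (or more precisely to the universal situation covered by Lemma~\ref{LemBGNat}). Concretely, I would first invoke Lemma~\ref{LemBGNat} with $\lambda\rightsquigarrow\mu$, $\mu_1=\mu_2=\lambda'$, which already tells us that evaluation $\End(\theta^{out})\stackrel{\sim}{\to}\End_{\fg}(\theta^{out}\Delta_\mu)$ is an isomorphism — here $\theta^{out}=F_\mu^{\lambda'}$ is an indecomposable projective functor and $(\mu,\lambda')$ is a proper representative since $\lambda'=w_0^\mu\lambda$ satisfies $\lambda'\le w\lambda'$ for all $w\in W_\mu$. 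Then, to pass from $\Delta_\mu$ to an arbitrary $M$, I would use that any $M\in U\text{-}\mathrm{mod}_{\chi_s}^0$ is generated by images of $\Delta_\mu$: since $M$ has central character $\chi_s$ and $\mm_{\chi_s}\subset\Ann(M)$, and $M$ is finitely generated, $M$ is a quotient of a finite direct sum $\Delta_\mu^{\oplus k}$. Given a natural endotransformation $\alpha$ with $\alpha_M=0$, naturality with respect to the surjection $\Delta_\mu^{\oplus k}\tto M$ and with respect to inclusions $\Delta_\mu\hookrightarrow\Delta_\mu^{\oplus k}$ forces $\alpha_{\Delta_\mu}=0$ on the image, hence $\alpha_{\Delta_\mu}=0$ by right-exactness considerations; combined with the isomorphism from Lemma~\ref{LemBGNat} this gives $\alpha=0$.

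**The isomorphism for simple $M$.** For part (2), the plan is a dimension count. By Lemma~\ref{ClassBG} and the biadjunction of Lemma~\ref{PropBG}\eqref{LemWall}, we have $\theta^{on}\theta^{out}\cong\Id^{\oplus n}_{\eta(\mu)}$ with $n=|W_\mu|$. For simple $M$ with central character $\chi_s$, Dixmier's Schur lemma gives $\End_{\fg}(M)=\mC$, so
$$\End_{\fg}(\theta^{out}M)\;\cong\;\Hom_{\fg}(\theta^{out}M,\theta^{out}M)\;\cong\;\Hom_{\fg}(\theta^{on}\theta^{out}M,M)\;\cong\;\Hom_{\fg}(M^{\oplus n},M)\;\cong\;\mC^n.$$
On the other hand, I would compute $\dim\End(\theta^{out})=n$ as well. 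This follows either directly from Lemma~\ref{LemBGNat} — $\End(\theta^{out})\cong\Hom_{\fg}(\theta^{out}\Delta_\mu,\theta^{out}\Delta_\mu)=\End_{\fg}(P_{\lambda'})$ where $P_{\lambda'}=\theta^{out}\Delta_\mu$ — or, more cleanly, from the adjunction computation above applied with $M=\Delta_\mu$ together with the fact that $\Delta_\mu$ has endomorphism ring $\mC$ and $[\Delta_\mu^{\oplus n}:L_\mu]$-type reasoning; in any case $\dim\End(\theta^{out})=n$. Since part (1) provides an injection $\End(\theta^{out})\hookrightarrow\End_{\fg}(\theta^{out}M)$ between spaces of equal dimension $n$, it is an isomorphism.

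**Main obstacle.** The one point requiring care is the passage from $\Delta_\mu$ to a general $M\in U\text{-}\mathrm{mod}_{\chi_s}^0$ in part (1): a natural transformation of functors is determined by its values on a generating family of objects, but one must be careful that $\theta^{out}$ being right-exact (as a projective functor it is exact, in fact) lets us detect vanishing of $\alpha_{\Delta_\mu}$ from vanishing of $\alpha_M$ and $\alpha_{\Delta_\mu^{\oplus k}}$. Since every projective functor is exact and additive, $\alpha_{\Delta_\mu^{\oplus k}}=\alpha_{\Delta_\mu}^{\oplus k}$, and applying $\theta^{out}$ to the surjection $p\colon\Delta_\mu^{\oplus k}\tto M$ gives a surjection $\theta^{out}(p)$ with $\theta^{out}(p)\circ\alpha_{\Delta_\mu^{\oplus k}}=\alpha_M\circ\theta^{out}(p)=0$, forcing $\alpha_{\Delta_\mu}^{\oplus k}$ to vanish on $\theta^{out}(\Delta_\mu^{\oplus k})$, i.e. $\alpha_{\Delta_\mu}=0$. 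Then Lemma~\ref{LemBGNat} finishes it. Everything else is bookkeeping with adjunctions and Schur's lemma, and the dimension count in part (2) is the conceptual heart.
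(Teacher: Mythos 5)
Your dimension count for the isomorphism part is correct and matches the paper, but the injectivity argument has two genuine gaps, one structural and one logical, and the route you propose is not the one the paper takes.

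First, the structural gap: you assert that any $M\in U\text{-}\mathrm{mod}_{\chi_s}^0$ is a quotient of a finite sum $\Delta_\mu^{\oplus k}$. This is false. The category $U\text{-}\mathrm{mod}_{\chi_s}^0$ is the category of all finitely generated modules over $U/U\mm_{\chi_s}$, which is much larger than $\cO$; for instance it contains Whittaker modules and other modules far outside $\cO$ that admit no nonzero map from a Verma module at all. What is true is that $M$ is a quotient of $(U/U\mm_{\chi_s})^{\oplus k}$, but $U/U\mm_{\chi_s}$ is not a sum of Vermas. Second, the logical gap: even if $M$ were a quotient of $\Delta_\mu^{\oplus k}$, from $\theta^{out}(p)\circ\alpha_{\Delta_\mu^{\oplus k}}=0$ you infer $\alpha_{\Delta_\mu^{\oplus k}}=0$. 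But $\theta^{out}(p)$ is a surjection, not a monomorphism, so it cannot be cancelled on the left; the naturality square only tells you that the image of $\alpha_{\Delta_\mu^{\oplus k}}$ lies in $\ker\bigl(\theta^{out}(p)\bigr)$. Detecting vanishing of $\alpha$ via a generator would require postcomposing with a monomorphism, not precomposing with an epimorphism, and neither is available from a presentation alone.

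The paper's actual argument sidesteps both problems by applying $\theta^{on}$ to the natural transformation. Since $\theta^{on}\theta^{out}\cong\Id^{\oplus n}$, whiskering gives $\theta^{on}(\alpha)\in\End(\Id^{\oplus n})$, and Lemma~\ref{LemBGNat} identifies this with $\Mat_n(\mC)$, i.e.\ matrices of scalar natural transformations. A nonzero such matrix evaluated at any nonzero module is nonzero, so $\theta^{on}(\alpha_M)=\bigl(\theta^{on}(\alpha)\bigr)_M\not=0$ forces $\alpha_M\not=0$ for every nonzero $M$ simultaneously. The only remaining step is to show $\theta^{on}(\alpha)\not=0$ when $\alpha\not=0$, which is checked at $\Delta_\mu$: by Lemma~\ref{LemBGNat}, $\alpha\not=0$ gives $\alpha_{\Delta_\mu}\not=0$ in $\End_{\fg}(P_{\lambda'})$, and the faithfulness of $\theta^{on}$ on $\End_{\fg}(P_{\lambda'})$ is a nontrivial input cited from \cite[4.12(3)]{Jantzen}. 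This is the key idea your proposal is missing; without a mechanism that transfers nonvanishing to \emph{arbitrary} $M$ rather than just to $\Delta_\mu$, injectivity cannot be obtained.
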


\begin{proof}
Take a natural transformation $\alpha:\theta^{out}\Rightarrow \theta^{out}$. 
Then $\theta^{on}(\alpha)$ is a natural 
transformation $\Id^{\oplus n}\Rightarrow \Id^{\oplus n}$. 
By Lemma~\ref{LemBGNat}, we have $\End(\Id^{\oplus n})=\Mat_{n}(\End(\Id))$, 
with $\End(\Id)$ consisting only of scalar multiples of the identity natural 
transformation $\Id\Rightarrow\Id$. Hence a non-zero element in $\End(\Id^{\oplus n})$
evaluated at a non-zero module always yields a non-zero morphism. We claim that $\theta^{on}(\alpha)$ is not zero. 
It then follows that $\theta^{on}(\alpha)$ evaluated at any module $M$ in $U$-mod$_{\chi_s}^0$ 
is not zero, so in particular $\alpha_M$ is not zero. 

To conclude the proof of injectivity, 
we can therefore just observe that we have  $\theta^{on}(\alpha_{\Delta_\mu})\not=0$. 
By Lemma~\ref{LemBGNat}, $\alpha_{\Delta_\mu}:P_{\lambda'}\to P_{\lambda'}$ 
is not zero and $n=[P_{\lambda'}:L_{\lambda'}]$. 
It follows from \cite[4.12(3)]{Jantzen} that $\theta^{on}(\beta)\not=0$, 
for any non-zero endomorphism $\beta$ of~$P_{\lambda'}$.

Now, let $M=L$ be simple. In this case
$$\dim\End_{\fg}(\theta^{out}L)=\dim\Hom_{\fg}(L,L^{\oplus n})=n.$$
Hence, by Lemma~\ref{LemBGNat}  the dimensions of~$\End_{\fg}(\theta^{out}L)$ and $\End(\theta^{out})$ 
agree  and the monomorphism must be an isomorphism.
\end{proof}

\begin{proof}[Alternative proof]
We view $\theta^{out}$ as a functor from $U$-mod$_{\chi_s}^0$ to the full subcategory 
$\cC$ of~$U$-mod$_{\chi_r}$ of modules isomorphic to modules of the form 
$\theta^{out}N$, with $N$ in $U$-mod$_{\chi_s}^0$. By construction, for this 
interpretation of~$\theta^{out}$, $\End(\theta^{out})$ coincides with the algebra of 
natural transformations as in Lemma~\ref{LemBGNat}. Now the restriction of~$\theta^{on}$ 
to $\cC$ has image contained in $U$-mod$_{\chi_s}^0$.
Moreover, it also follows that $(\theta^{on},\theta^{out})$ 
is still a pair of bi-adjoint functors, in the above interpretation.

Now adjunction and evaluation yields a commutative diagram
$$\xymatrix{
\End(\theta^{out})\ar[r]^{\sim}\ar[d]&\Nat(\Id,\Id^{\oplus n})\ar[d]\\
\End_{\fg}(\theta^{out}M)\ar[r]^{\sim}&\Hom_{\fg}(M,M^{\oplus n}).
}$$
By construction, we have to interpret $\Nat(\Id,\Id^{\oplus n})$ as natural transformations with $\Id$ viewed as an endofunctor of $U$-mod$_{\chi_s}^0$. However, this does not differ from the space of natural transformations as in Lemma~\ref{LemBGNat}. Hence, $\dim \Nat(\Id,\Id^{\oplus n})=n$ and the space just consists of linear combinations of identity natural transformations of~$\Id$. In particular, the right vertical arrow is a monomorphism. This forces the left vertical arrow to be a monomorphism as well.

If $M$ is simple, the right vertical arrow is, clearly, an isomorphism.
\end{proof}

\begin{lemma}\label{LemEas}
Fix a simple module $L$ with central character $\chi_s$. Then $\theta^{out}L$ has simple top and socle $S$. Moreover, we have $[\theta^{out}L:S]=n$ and $\theta^{on}S=L$.
\end{lemma}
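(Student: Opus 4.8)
The plan is to reduce the statement to its core---that $\theta^{out}L$ has \emph{simple top}---and to obtain the other three claims ($\theta^{out}L$ has simple socle $S$, $[\theta^{out}L:S]=n$, and $\theta^{on}S\cong L$) as formal consequences.

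I begin with the routine structural facts. First, $\theta^{out}L\neq 0$, because $\theta^{on}\theta^{out}L\cong L^{\oplus n}\neq 0$. Since $\theta^{out}$ is a projective functor, $\theta^{out}L$ is a direct summand of $L\otimes E$ for a suitable $E\in\cF$, so by Corollary~\ref{CorRad} it has finite type radical; hence $\topp(\theta^{out}L)$ is semisimple of finite length. Next, Lemma~\ref{lemnew135} applied to the simple module $L$ shows that evaluation is an algebra isomorphism $\End(\theta^{out})\stackrel{\sim}{\to}\End_{\fg}(\theta^{out}L)$; evaluating at $\Delta_\mu$ and invoking Lemma~\ref{LemBGNat} together with $\theta^{out}\Delta_\mu\cong P_{\lambda'}$ (Lemma~\ref{ClassBG}), one gets $\End_{\fg}(\theta^{out}L)\cong\End_{\fg}(P_{\lambda'})$, which is local of dimension $[P_{\lambda'}:L_{\lambda'}]=n$ because $P_{\lambda'}$ is an indecomposable projective object of $\cO$. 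Thus $\theta^{out}L$ is indecomposable and $\dim_{\mC}\End_{\fg}(\theta^{out}L)=n$.

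Suppose now that $S:=\topp(\theta^{out}L)$ is simple. Applying Lemma~\ref{LemSimpTop} to $\theta^{out}L$ (finite type radical, simple top $S$) with $N=\theta^{out}L$ gives $n=\dim_{\mC}\End_{\fg}(\theta^{out}L)\leq[\theta^{out}L:S]$. On the other hand, $S$ is a quotient of $\theta^{out}L$, so $\theta^{on}S\cong L^{\oplus k}$ with $k\geq 1$ by Lemma~\ref{LemTriv0}, and since $\theta^{on}$ is exact, $[\theta^{out}L:S]\leq[\theta^{out}L:S]\cdot k\leq[\theta^{on}\theta^{out}L:L]=n$. Hence $[\theta^{out}L:S]=n$ and $k=1$, i.e.\ $\theta^{on}S\cong L$. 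Finally, for every simple $S'$ the Hom-symmetry in Lemma~\ref{LemTriv0} gives $\dim_{\mC}\Hom_{\fg}(S',\theta^{out}L)=\dim_{\mC}\Hom_{\fg}(\theta^{out}L,S')$; the right-hand side is the multiplicity of $S'$ in $\topp(\theta^{out}L)$ and the left-hand side is its multiplicity in $\soc(\theta^{out}L)$, so $\soc(\theta^{out}L)\cong\topp(\theta^{out}L)=S$ is simple as well. This completes the deduction.

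It remains to establish the crux, which I expect to be the main obstacle: $\topp(\theta^{out}L)$ is simple. Equivalently, by the computations above (using that $\theta^{on}$ carries submodules and quotients of $\theta^{out}L$ to semisimple modules, Lemma~\ref{LemTriv0}), one must show that in the exact sequence $0\to\Hom_{\fg}(\theta^{out}L,\rad(\theta^{out}L))\to\End_{\fg}(\theta^{out}L)\to\Hom_{\fg}(\theta^{out}L,\topp(\theta^{out}L))\to 0$ the last term is one-dimensional, i.e.\ that every non-invertible endomorphism of $\theta^{out}L$ has image in $\rad(\theta^{out}L)$. My plan is to prove this by reducing to category $\cO$: by Duflo's theorem there is a simple highest weight module $L_\nu$ in the singular block with $\Ann(L)=\Ann(L_\nu)$, whence $\Ann(\theta^{out}L)=\Ann(\theta^{out}L_\nu)$ by \eqref{eqJan} and $\End_{\fg}(\theta^{out}L)\cong\End_{\fg}(\theta^{out}L_\nu)$ by Lemma~\ref{lemnew135}; for $L_\nu\in\cO$ the module $\theta^{out}L_\nu$ has simple top by the classical theory of translation functors out of a wall (see e.g.\ \cite{Jantzen}). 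The delicate final step is to transport this property across the equality of annihilators and endomorphism rings to $\theta^{out}L$ itself; this is where the argument must exploit, beyond these two coincidences, the indecomposability of $\theta^{out}L$ and the rigid bookkeeping forced by $\theta^{on}\theta^{out}\cong\Id^{\oplus n}$ on its Loewy structure.
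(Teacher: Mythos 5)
Your formal deductions from the assumption ``$\topp(\theta^{out}L)$ is simple'' are correct and essentially the same as in the paper: Lemma~\ref{LemSimpTop} gives $n\le[\theta^{out}L:S]$, exactness of $\theta^{on}$ together with $\theta^{on}S\cong L^{\oplus k}$ gives $n\ge k[\theta^{out}L:S]$, whence $k=1$ and $[\theta^{out}L:S]=n$, and the Hom-symmetry of Lemma~\ref{LemTriv0} transfers simplicity of the top to the socle. The structural preliminaries (indecomposability via $\End_{\fg}(\theta^{out}L)\cong\End(\theta^{out})\cong\End_\fg(P_{\lambda'})$, a local algebra of dimension $n$) are also correct.

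However, the crux of the lemma --- that $\theta^{out}L$ has simple top --- is not actually proved. You correctly identify it as the main obstacle and sketch a reduction to category $\cO$ via Duflo's theorem, but the final transport step is left as a declaration of intent (``this is where the argument must exploit \ldots'') rather than an argument. The difficulty is real: the two facts you do establish about $\theta^{out}L$ --- that it has the same annihilator as $\theta^{out}L_\nu$, and the same endomorphism ring --- do not by themselves control the top. In particular, locality of $\End_\fg(\theta^{out}L)$ only gives indecomposability, and indecomposable modules can have non-simple top (e.g.\ a module with Loewy layers $S_1\oplus S_2$ over $S_3$). So the plan, as written, does not close.

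The paper takes a different and shorter route at exactly this point. Instead of reducing to $\cO$, it invokes Soergel's Endomorphismensatz (\cite[Theorem~12]{So}), combined with Lemma~\ref{LemBGNat}, to conclude that $\End(\theta^{out})$ is isomorphic not merely to a local algebra but to the \emph{coinvariant algebra} of $W_\mu$, which is a commutative local Frobenius algebra and therefore has \emph{simple socle} as a module over itself. This extra Gorenstein-type information is precisely what locality alone does not provide. Once one knows $\End(\theta^{out})\cong\End_\fg(\theta^{out}L)$ has simple socle, the top$\;\cong\;$socle symmetry of Lemma~\ref{LemTriv0} rules out a semisimple top of length $\ge2$: each simple summand of the top would, after factoring through the corresponding copy in the socle, produce an independent element of the socle of $\End_{\fg}(\theta^{out}L)$, a contradiction. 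If you want to salvage the $\cO$-reduction, the concrete missing ingredient is exactly this statement about the socle of the endomorphism algebra; once you import it (for instance from the fact that $\End_\cO(P_{\lambda'})$ is the coinvariant algebra), the rest of your argument goes through.
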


\begin{proof}
By combining \cite[Theorem~12]{So} and Lemma~\ref{LemBGNat}, it follows that
the algebra $\End(\theta^{out})$ is isomorphic to the coinvariant algebra of
$W_{\mu}$, where $\mu\in \eta^{-1}(\chi_s)$ is dominant. 
In particular, it has simple socle.
By Lemma~\ref{LemTriv0}, the top and socle of $\theta^{out}S$ are isomorphic. If the top and socle are not simple then this clearly contradicts the fact that $\End(\theta^{out})\cong\End_{\fg}(\theta^{out}L)$ 
(see Lemma~\ref{lemnew135}) must have simple socle.

Lemma~\ref{LemSimpTop} implies that
$$n=\dim\Hom_{\fg}(L,L^{\oplus n})=\dim\End_{\fg}(\theta^{out}L)\,\le\, [\theta^{out}L:S].$$
On the other hand, we have $\theta^{on}S=L^{\oplus k}$, for some $k\ge 1$, by Lemma~\ref{LemTriv0}. Applying the exact functor $\theta^{on}$ on $\theta^{out}L$ thus yields
$$n=[L^{\oplus n}:L]\ge k[\theta^{out}L:S].$$
Hence we find $k=1$ and $[\theta^{out}L:S]=n$.
\end{proof}

\begin{proof}[Proof of Theorem~\ref{ThmOut}]
Take some simple module $S\in \simp^s_r$, then $\theta^{on}S$ is non-zero and has finite type radical. In particular, there exists a simple module $L$ with non-zero morphisms $\theta^{on}S\to L$ and $S\to\theta^{out}L$. By Lemma~\ref{LemEas}, we have $t(S)=L$.
Also by Lemma~\ref{LemEas}, the map $t$ is a bijection and parts  \eqref{ThmOut.1} and \eqref{ThmOut.3} follow. 

Now we prove part \eqref{ThmOut.2}. By Theorem~\ref{ThmMain} it is sufficient 
to prove that every submodule of~$\theta^{out}L$ contains a simple subquotient 
of Gelfand-Kirillov dimension $\GK(L)$. For a submodule $N\subset \theta^{out}L$, Lemma~\ref{LemTriv0} implies there exists a non-zero morphism $L\hookrightarrow \theta^{on}N$. By adjunction, there is a non-zero morphism $\theta^{out}L\to N$. Since $\theta^{out}L$ has finite type radical and all simple quotients have Gelfand-Kirillov dimension $\GK(L)$ by Lemma~\ref{LemGKses}, we find that $N$ contains a simple subquotient of Gelfand-Kirillov dimension $\GK(L)$.
\end{proof}

For a simple $U$-module~$S$, we denote by $S\otimes \cF$ the full subcategory of the
category of all $\mathfrak{g}$-modules consisting of all modules isomorphic to the
ones of the form $S\otimes E$, with $E\in\cF$.

\begin{proposition}
If, for every simple module~$S$ with regular central character, all modules in 
$S\otimes \cF$ have finite type socle (resp. a GK-complete filtration), then,
for every simple module~$L$, all modules in 
$L\otimes \cF$ have finite type socle (resp. a GK-complete filtration).
\end{proposition}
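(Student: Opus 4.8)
The plan is to reduce an arbitrary simple module $L$ to a simple module $S$ carrying a \emph{regular} central character by translating out of a wall, so that $L$ becomes a direct summand of $S\otimes V$ for some $V\in\cF$; the assertion then follows from the hypothesis together with the fact that both properties in question pass to direct summands.

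First I would dispatch the case in which the central character $\chi$ of $L$ is regular, since there the claim is precisely the hypothesis. So assume $\chi=\chi_s$ is singular. I would pick a dominant $\mu\in\eta^{-1}(\chi_s)$ and a dominant regular integral weight $\nu$, and set $\lambda:=\mu+\nu$; a direct check (using that $\mu$ dominant forces $\mu(h_\alpha)\notin\mZ_{<0}$ while $\nu(h_\alpha)\in\mZ_{>0}$ for $\alpha\in\Phi^+$) shows that $\lambda$ is dominant and regular and that $\lambda-\mu=\nu\in\Lambda$. Theorem~\ref{ThmOut} then applies to the pair $(\chi_r,\chi_s)$ with $\chi_r:=\eta(\lambda)$: by the bijectivity of $t:\simp_r^s\to\simp_s$ in part~\eqref{ThmOut.1}, there is a simple module $S$ with regular central character $\chi_r$ such that $\theta^{on}S\cong L$, where $\theta^{on}=F^\mu_\lambda$.

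Next, since $\theta^{on}$ is a projective functor, it is a direct summand of $-\otimes V$ restricted to $U\mbox{-mod}_{\chi_r}$ for some $V\in\cF$, so $L\cong\theta^{on}S$ is a direct summand of $S\otimes V$, and hence, for every $E\in\cF$, the module $L\otimes E$ is a direct summand of $S\otimes(V\otimes E)$, with $V\otimes E\in\cF$. By hypothesis $S\otimes(V\otimes E)$ has finite type socle; as the socle of a finite direct sum is the direct sum of the socles, and every non-zero submodule of a summand is a non-zero submodule of the whole module, Remark~\ref{RemSoc}\eqref{RemSoc.2} yields that $L\otimes E$ has finite type socle. For the GK-complete variant the same summand argument applies once one knows $\GK(L)=\GK(S)$: from $L$ being a summand of $S\otimes V$ together with~\eqref{eqJan-2} and Lemma~\ref{LemGK}\eqref{LemGK.1} one gets $\GK(L)\le\GK(S)$, while $\theta^{out}L$ is a summand of $L\otimes V'$ for a suitable $V'\in\cF$ and contains $S=\soc(\theta^{out}L)$ (Theorem~\ref{ThmOut}\eqref{ThmOut.1}), so $\GK(S)\le\GK(\theta^{out}L)\le\GK(L)$. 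Thus $L\otimes E$ is a subquotient of $S\otimes(V\otimes E)$ of equal Gelfand--Kirillov dimension, and Lemma~\ref{LemGKcom} transports a GK-complete filtration of $S\otimes(V\otimes E)$ to one of $L\otimes E$.

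The argument is essentially formal once Theorem~\ref{ThmOut} is in hand; the only points needing a little care are the surjectivity of the bijection $t$ (which guarantees that \emph{every} simple $L$ with singular central character is of the form $\theta^{on}S$) and, for the GK-complete statement, the equality $\GK(L)=\GK(S)$ recorded above. I do not expect a genuine obstacle beyond this bookkeeping.
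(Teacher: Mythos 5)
Your argument is correct and follows the same route as the paper: use Theorem~\ref{ThmOut} to realize $L$ as a direct summand of $S\otimes V$ with $S$ having regular central character, then pass both properties through direct summands (and, for GK-complete filtrations, through subquotients of equal GK dimension via Lemma~\ref{LemGKcom}). The only cosmetic difference is that you establish $\GK(L)=\GK(S)$ by a detour through $\theta^{out}$, whereas the paper gets it directly from Lemma~\ref{LemGKses} applied to the inclusion $L\hookrightarrow S\otimes V$.
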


\begin{proof}
By Theorem~\ref{ThmOut}, for every simple module~$L$, there exists a simple module~$S$ with regular central character and $V\in\cF$ such that $L$ is a direct summand of~$S\otimes V$.

Hence each module in $L\otimes\cF$ is a direct summand of a module in $S\otimes\cF$. The conclusion for socles follows from observing that any submodule of a module with finite type socle has finite type socle. The conclusion about GK-complete filtrations follows from Lemmata~\ref{LemGKses} and~\ref{LemGKcom}.
\end{proof}

\begin{corollary}
Let $w_0'$ be the longest element of a Coxeter subgroup $W'\subset W$ and $S$ a simple 
module with integral regular central character. If $\theta_{w_0'}S\not=0$, then 
$\theta_xS$ has finite type socle, for all~$x\in W'$.
\end{corollary}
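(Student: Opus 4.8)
The plan is to realise both the hypothesis and the conclusion through translation onto and off the singular block attached to $W'$, and then to invoke Theorem~\ref{ThmOut}. Fix a dominant regular integral weight $\lambda$ whose central character $\eta(\lambda)$ is that of $S$, so each $\theta_x=F^{x\lambda}_\lambda$ is an endofunctor of $U\mbox{-mod}_{\eta(\lambda)}$. If $W'=\{e\}$ there is nothing to prove; otherwise choose a dominant integral weight $\mu$ with $W_\mu=W'$ (possible as $W'$ is a parabolic subgroup of $W$). Then $w_0'=w_0^\mu$, the central character $\chi_s:=\eta(\mu)$ is singular, and with $\chi_r:=\eta(\lambda)$, $\lambda'=w_0^\mu\lambda$, $\theta^{on}=F^\mu_\lambda$, $\theta^{out}=F^{\lambda'}_\mu$ and $n=|W_\mu|$ we are exactly in the situation of Theorem~\ref{ThmOut}.

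The argument rests on two standard facts. First, $\theta^{out}\theta^{on}\cong\theta_{w_0'}$ as endofunctors of $U\mbox{-mod}_{\eta(\lambda)}$: by Lemma~\ref{ClassBG}, $\theta^{on}\Delta_\lambda\cong P_\mu$, and $P_\mu\cong\Delta_\mu$ since the dominant Verma is projective (BGG reciprocity), so Lemma~\ref{ClassBG} again gives $\theta^{out}\theta^{on}\Delta_\lambda\cong F^{\lambda'}_\mu\Delta_\mu\cong P_{\lambda'}=P_{w_0'\lambda}\cong\theta_{w_0'}\Delta_\lambda$; since a projective endofunctor of the regular block is, by Lemma~\ref{ClassBG} and Krull--Schmidt, determined up to isomorphism by its value on $\Delta_\lambda$, this suffices. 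Second, for each $x\in W'=W_\mu$ there is $d_x\ge 0$ with $\theta_x\theta_{w_0'}\cong\theta_{w_0'}^{\oplus d_x}$; under the Kazhdan--Lusztig dictionary of Section~\ref{TypeA} this holds because $C'_{w_0^\mu}=\sum_{w\in W_\mu}w$ in $\mZ W$, so multiplication by $C'_x$ with $x\in W_\mu$ returns a non-negative multiple of $C'_{w_0^\mu}$.

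Now for the proof. By the first fact and the hypothesis, $\theta^{out}\theta^{on}S=\theta_{w_0'}S\ne 0$, hence $\theta^{on}S\ne 0$, i.e. $S\in\simp^s_r$. By Theorem~\ref{ThmOut}\eqref{ThmOut.1}, $L:=\theta^{on}S$ is simple with central character $\chi_s$, and $S=\soc(\theta^{out}L)$, so $S\hookrightarrow\theta^{out}L$. Fix $x\in W'$. Using $\theta^{on}\theta^{out}\cong\Id^{\oplus n}$ (Lemma~\ref{PropBG}\eqref{LemWall}) together with the two facts,
$$(\theta_x\theta^{out})^{\oplus n}\;\cong\;\theta_x\theta^{out}\theta^{on}\theta^{out}\;\cong\;\theta_x\theta_{w_0'}\theta^{out}\;\cong\;\theta_{w_0'}^{\oplus d_x}\theta^{out}\;\cong\;\big(\theta^{out}\theta^{on}\theta^{out}\big)^{\oplus d_x}\;\cong\;(\theta^{out})^{\oplus nd_x}.$$
Evaluating at $L$ shows $\theta_x\theta^{out}L$ is a direct summand of $(\theta^{out}L)^{\oplus nd_x}$. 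By Theorem~\ref{ThmOut}\eqref{ThmOut.2} the module $\theta^{out}L$ has finite type socle, hence so does $(\theta^{out}L)^{\oplus nd_x}$, and hence so does its submodule $\theta_x\theta^{out}L$ (finite direct sums, and submodules, of modules with finite type socle have finite type socle; see Remark~\ref{RemSoc}\eqref{RemSoc.2}). Applying the exact functor $\theta_x$ to $S\hookrightarrow\theta^{out}L$ yields $\theta_xS\hookrightarrow\theta_x\theta^{out}L$, whence $\theta_xS$ has finite type socle. The main points requiring care are the bookkeeping in the two standard facts and the reduction to a parabolic $W'=W_\mu$, which is what makes $\theta^{on}$ and $\theta^{out}$ available; I expect the identification of $\theta^{out}\theta^{on}$ with precisely the indecomposable $\theta_{w_0'}$, rather than a functor merely containing it, to be the step deserving the most attention.
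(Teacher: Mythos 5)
Your proposal is correct and takes essentially the same route as the paper: fix a dominant integral $\mu$ with $W_\mu=W'$, write $\theta_{w_0'}=\theta^{out}\theta^{on}$, deduce from Theorem~\ref{ThmOut} that $\theta_{w_0'}S=\theta^{out}L$ has finite type socle with $S=\soc(\theta_{w_0'}S)$, invoke Kazhdan--Lusztig combinatorics to see that $\theta_x\theta_{w_0'}$ is a direct sum of copies of $\theta_{w_0'}$, and apply $\theta_x$ to the embedding $S\hookrightarrow\theta_{w_0'}S$. The paper asserts the two ``standard facts'' ($\theta^{out}\theta^{on}\cong\theta_{w_0'}$ and $\theta_x\theta_{w_0'}\cong\theta_{w_0'}^{\oplus d_x}$) without proof; your write-up just supplies the (correct) verifications via Lemma~\ref{ClassBG} and Vermas, and takes a harmless $n$-fold detour through $\theta_x\theta^{out}$ that Krull--Schmidt collapses back to the paper's more direct statement.
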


\begin{proof}
We fix some dominant and integral $\mu$ such that $W_\mu=W'$. Then we can write 
$\theta_{w_0'}=\theta^{out}\theta^{on}$ using the notation as in Theorem~\ref{ThmOut}.
From Theorem~\ref{ThmOut} it follows that  $\theta_{w_0'} S$ has finite type socle and 
$S=\soc(\theta_{w_0'} S)$. From Kazhdan-Lusztig combinatorics it follows that $\theta_x\theta_{w_0'}$ 
is a direct sum of copies of~$\theta_{w_0'}$. Hence $\theta_xS$ is a submodule of a 
direct sum of copies of~$\theta_{w_0'}S$ and also has finite type socle.
\end{proof}

\section{Direct sums of copies of~$\mathfrak{sl}_2$}\label{sec5}

In this section, we fix $j\in\mZ_{>0}$ and set $\fg=\mathfrak{sl}_2(\mC)^{\oplus j}$. 
For this $\mathfrak{g}$, we can strengthen Theorem~\ref{ThmA} as follows.

\begin{theorem}\label{ThmSl2}
For $\fg=\mathfrak{sl}_2(\mC)^{\oplus j}$ and any simple $\fg$-module $S$ and $E\in \cF$, 
the module $S\otimes E$ has a GK-complete filtration.
Moreover, if $j=1$, then $S\otimes E$ even has finite length.
\end{theorem}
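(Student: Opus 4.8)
The plan is to settle the ``moreover'' assertion ($j=1$) first, since it also serves as a base case, and then to reduce the general case to it one $\mathfrak{sl}_2(\mathbb{C})$-factor at a time, via the projective-functor machinery of Section~\ref{sec4}. Throughout, recall from \eqref{eqJan-2} that $\GK(S\otimes E)=\GK(S)=:d$, that $S\otimes E$ is noetherian (hence finitely generated, hence has finite type radical by Theorem~\ref{ThmMain}), and that by Lemma~\ref{LemGKses} every nonzero proper submodule or quotient of $S\otimes E$ has Gelfand--Kirillov dimension $d$.

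For $j=1$: every simple $\mathfrak{sl}_2(\mathbb{C})$-module has Gelfand--Kirillov dimension at most $1$ (a simple module is $U/L$ for a maximal left ideal $L$, and these quotients are known to be either finite dimensional or of Gelfand--Kirillov dimension $1$), so $\GK(S\otimes E)\le1$ and it suffices to prove that a finitely generated $U(\mathfrak{sl}_2(\mathbb{C}))$-module $M$ with $\GK(M)\le1$ has finite length. Decomposing $M$ along its finitely many generalized central characters reduces to a single central character $\chi$. If $\chi$ admits no finite dimensional module, then every nonzero submodule of $M$, and hence every simple subquotient, has Gelfand--Kirillov dimension exactly $1$, so Lemma~\ref{LemGK}\eqref{LemGK.2} (together with noetherianity) shows there are at most $\BN(M)$ of them and $M$ has finite length. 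If $\chi$ admits a finite dimensional simple module $L$, one uses that the only proper nonzero two-sided ideal of $B_\chi:=U(\mathfrak{sl}_2(\mathbb{C}))/\mm_\chi$ is the finite-codimensional ideal $\Ann(L)$: after factoring out the largest finite dimensional submodule we may assume $\Ann_{B_\chi}(M)=0$, and then the $\Ann(L)$-adic filtration of $M$, combined with the bound on Gelfand--Kirillov-dimension-$1$ subquotients and the vanishing $\Ext^1_{U}(L,L)=0$, forces $M$ to have finite length. Hence $S\otimes E$ has finite length, and any composition series of it is a fortiori a GK-complete filtration.

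For $j>1$: by the reduction in the proposition following Theorem~\ref{ThmOut} (stated there for GK-complete filtrations) it suffices to treat simple $S$ with regular central character, and fixing a regular block containing such an $S$ one reduces, via the arguments around Theorems~\ref{ThmBlock} and~\ref{ThmOut}, to showing that $\theta(S)$ admits a GK-complete filtration for every indecomposable projective endofunctor $\theta$ of this block. The Weyl group of $\mathfrak{sl}_2(\mathbb{C})^{\oplus j}$ is $(\mathbb{Z}/2)^{j}$, whose Kazhdan--Lusztig two-sided cells are singletons and whose parabolic longest elements are precisely the products $w_I=\prod_{i\in I}s_i$, $I\subseteq\{1,\dots,j\}$; thus $\theta=\theta_{w_I}$ for some $I$, and $\theta_{w_I}$ is the composite of the across-the-wall translations $\theta_{s_i}$ ($i\in I$) acting in distinct $\mathfrak{sl}_2(\mathbb{C})$-directions. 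Starting from $S$ (its own GK-complete filtration) I would peel these off one at a time: applying an exact $\theta_{s_i}$ to a module carrying a GK-complete filtration yields a filtration whose subquotients of top Gelfand--Kirillov dimension are images under $\theta_{s_i}$ of simple modules, and these can be refined to GK-complete pieces using the $j=1$ analysis in the $i$-th direction together with the scalar action of the $i$-th Casimir on $S$, with Lemmata~\ref{LemGKses}, \ref{LemGK} and \ref{LemGKcom} keeping the Gelfand--Kirillov bookkeeping under control. Once a GK-complete filtration of $S\otimes E$ is obtained, finite type socle follows from Corollary~\ref{completeCor} (and is in any case already given by Theorem~\ref{ThmA}).

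The main obstacle is this factor-peeling step, specifically the refinement of the images $\theta_{s_i}(L)$ of simple modules. A simple $\mathfrak{sl}_2(\mathbb{C})^{\oplus j}$-module need not be an exterior tensor product of simple $\mathfrak{sl}_2(\mathbb{C})$-modules --- precisely the phenomenon behind Stafford's failure of artinianity for $\mathfrak{sl}_2(\mathbb{C})\oplus\mathfrak{sl}_2(\mathbb{C})$ --- so one cannot literally restrict to the $i$-th factor; the argument has to run through projective functors and the factor-wise Casimir scalars, and, as in the proof of Theorem~\ref{ThmA}, it will likely be necessary to first replace $S$ by a simple module that is minimal with respect to the projective functors annihilating it, so that the refinement only has to be carried out for such minimal simples. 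One must also verify that the subquotients of top Gelfand--Kirillov dimension produced along the way are genuinely simple, rather than extensions of a simple module by a module of smaller Gelfand--Kirillov dimension. A secondary, milder difficulty is the structural analysis of $B_\chi$ needed at a regular integral central character in the base case.
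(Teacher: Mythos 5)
Your overall strategy — reduce to $S\otimes V_i$ one $\mathfrak{sl}_2$-factor at a time — is the same as the paper's (Lemmata~\ref{LemSl2int}, \ref{Lemf} and Corollary~\ref{CorSl2}), and you are right that the crux is showing that the "middle" part of $S\otimes V_i$ (the complement of the simple sub and simple quotient in the Gelfand--Kirillov sense) has strictly lower Gelfand--Kirillov dimension. But you explicitly leave that crux unresolved, and that is exactly where the paper's argument has real content. The mechanism is not a reduction to a single $\mathfrak{sl}_2$-direction via Casimir scalars, nor minimality of $S$ with respect to projective functors: it is a \emph{Bernstein-number} argument. For $S\in\simp^i$ with $\GK(S)=d$, one defines an equivalence relation on simple modules in $\simp^i$ of GK dimension $d$ generated by "$S\sim S'$ iff $\Hom(S\otimes V_i,S')\not=0$ or $\Hom(S',S\otimes V_i)\not=0$". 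If $S$ has minimal Bernstein number $e$ in its class, then by additivity of Bernstein numbers (Lemma~\ref{LemGK}\eqref{LemGK.2}) and $\BN(S\otimes V_i)=2e$ (equation~\eqref{eqJan-2}), the simple sub and quotient $S_1,S_2$ of Lemma~\ref{LemSl2int} both have Bernstein number $e$, forcing $\GK(F_1T/F_2T)<d$. Iterating shows every module in the class has Bernstein number $e$, so the minimality hypothesis was vacuous. Without this (or an equivalent) argument, your step "the subquotients of top Gelfand--Kirillov dimension ... can be refined to GK-complete pieces" is an assertion, not a proof.

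Your treatment of the $j=1$ base case also diverges from, and is substantially heavier than, the paper's. You invoke a general classification-flavoured claim (every finitely generated $U(\mathfrak{sl}_2)$-module of Gelfand--Kirillov dimension $\le1$ has finite length), which in turn you prop up with a decomposition over central characters, the ideal structure of $U(\mathfrak{sl}_2)/\mm_\chi$, an $\Ann(L)$-adic filtration, and Whitehead's lemma; none of these steps is fully justified, and the decomposition of an arbitrary finitely generated module into generalized central character blocks is itself not automatic. The paper avoids all of this: it constructs the three-step filtration of $S\otimes V_i$ from Lemma~\ref{LemSl2int} directly, notes $\theta^{\mathrm{on}}(F_1T/F_2T)=0$, and applies \eqref{eqJan} to conclude $F_1T/F_2T$ is annihilated by a cofinite-dimensional two-sided ideal, hence (being finitely generated) is finite dimensional. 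That is the whole $j=1$ argument. In short: your reduction scaffolding is sound and matches the paper in spirit, but the Bernstein-number equivalence-class trick of Corollary~\ref{CorSl2} is the missing key idea, and your base case is taking an unnecessary and under-justified detour.
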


We label the $j$ copies of~$\mathfrak{sl}_2$ by the index $1\le i\le j$. We denote 
by $\epsilon_i\in\fh^\ast$ the weight half the positive root of the $i$-th copy. Accordingly, we write $\lambda=\sum \lambda_i\epsilon_i$, with 
$\lambda_i\in\mC$, for~$\lambda\in\fh^\ast$.  The Weyl group acts on $\fh^\ast$ by 
changing signs for the $\lambda_i$. For $1\le i\le j$, we denote by $V_i\cong\mC^2$ the natural module for the $i$-th 
copy of~$\mathfrak{sl}_2$, interpreted as a module for~$\fg$ in the obvious way. So, in this way we have 
$\supp(V_i)=\{\epsilon_i,-\epsilon_i\}$. 

Fix $i$ such that $1\le i\le j$. Denote by mod${}_i$ the full subcategory of~$U$-mod 
consisting of all modules $M$ on which the $i$-th copy of~$\mathfrak{sl}_2\subset \fg$ 
acts locally finitely. Clearly, $-\otimes V_i$ restricts to an endofunctor on mod${}_i$. 
We also denote by $\simp^i$ the set of isomorphism classes of simple $\fg$-modules 
which are {\em not} in mod${}_i$.

\begin{lemma}\label{LemSl2int}
Take a simple module $S\in \simp^i$. Then we have $S_1,S_2\in \simp^i$ and a filtration of~$T=S\otimes V_i$
$$0\subset F_2T\subset F_1T\subset T,\qquad\mbox{with}\quad F_2T\cong S_2\mbox{ and }T/F_1T\cong S_1.$$
If $j=1$, so $\fg=\mathfrak{sl}_2$, the module $S\otimes V_i$ has finite length.
\end{lemma}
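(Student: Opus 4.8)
The plan is to obtain the simple modules $S_1,S_2$ and the filtration almost entirely from adjunction together with the self-duality $V_i^{\ast}\cong V_i$ of the natural $\mathfrak{sl}_2$-module, using Theorem~\ref{ThmA} only to guarantee that $T:=S\otimes V_i$ has a non-zero essential socle. First I would record the following principle: every non-zero submodule $N\subseteq T$ satisfies $N\notin\mathrm{mod}_i$, and dually every non-zero quotient of $T$ lies outside $\mathrm{mod}_i$. For a submodule this follows from $\Hom_{\fg}(N,T)\cong\Hom_{\fg}(N\otimes V_i^{\ast},S)\cong\Hom_{\fg}(N\otimes V_i,S)$: the inclusion $N\hookrightarrow T$ is a non-zero element, so $S$ is a quotient of $N\otimes V_i$; if $N$ were in $\mathrm{mod}_i$ then so would $N\otimes V_i$ be, and hence so would its quotient $S$, contradicting $S\in\simp^i$. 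The statement for a non-zero quotient $Q$ is the same argument applied to $\Hom_{\fg}(T,Q)\cong\Hom_{\fg}(S,Q\otimes V_i^{\ast})\cong\Hom_{\fg}(S,Q\otimes V_i)$. In particular every simple submodule and every simple quotient of $T$ already lies in $\simp^i$; these will be $S_2$ and $S_1$.

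Next I would show that $T$ is not simple. By adjunction $\End_{\fg}(T)\cong\Hom_{\fg}\big(S,S\otimes(V_i\otimes V_i^{\ast})\big)$, and $V_i\otimes V_i^{\ast}\cong\mathbb{C}\oplus W_i$, where $W_i$ is the three-dimensional adjoint module of the $i$-th copy of $\mathfrak{sl}_2$ (inflated to $\fg$). The evaluation map $S\otimes W_i\to S$, $s\otimes x\mapsto x\cdot s$, is $\fg$-linear and non-zero, since $S\in\simp^i$ forces the $i$-th copy of $\mathfrak{sl}_2$ to act non-trivially on $S$; as $W_i$ is self-dual this gives $\Hom_{\fg}(S,S\otimes W_i)\cong\Hom_{\fg}(S\otimes W_i,S)\neq0$, so $\dim_{\mathbb{C}}\End_{\fg}(T)\geq2$ and $T$ is reducible. (Equivalently: the central Casimir $C_i$ of the $i$-th copy acts on the simple module $S$ by a scalar but on $T$ by a non-scalar $\fg$-endomorphism, again because $\mathfrak{sl}_2$ acts non-trivially on $S$.)

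Now I would assemble the filtration and treat $j=1$. Since $\fg=\mathfrak{sl}_2(\mathbb{C})^{\oplus j}$ is semisimple of type $A$, Theorem~\ref{ThmA} applies, so $T$ has finite type socle; in particular $\soc(T)\neq0$ and $\soc(T)$ is essential in $T$. As $S$ is simple it is noetherian, hence so is $T$, which therefore has a simple quotient; fix one, $S_1:=T/F_1T$ with $F_1T:=\ker(T\twoheadrightarrow S_1)$. By the previous paragraph $T$ is not simple, so $0\neq F_1T\subsetneq T$; essentiality of $\soc(T)$ gives $\soc(F_1T)=\soc(T)\cap F_1T\neq0$, so I pick a simple submodule $S_2\subseteq\soc(F_1T)$ and set $F_2T:=S_2$. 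This yields $0\subset F_2T\subseteq F_1T\subsetneq T$ with $F_2T\cong S_2$ and $T/F_1T\cong S_1$, and by the first paragraph $S_1,S_2\in\simp^i$ (if $F_2T=F_1T$ the middle section is zero, which is harmless). For $j=1$ the module $S\otimes V_i$ has finite length by the classical fact, recalled in Section~\ref{sec1}, that $S\otimes E$ is artinian when $\fk\cong\mathfrak{sl}_2(\mathbb{C})$; alternatively, every simple $\mathfrak{sl}_2$-module has $\GK\leq1$, so $\GK(S\otimes V_i)=\GK(S)\leq1$ by~\eqref{eqJan-2}, and a finitely generated $U(\mathfrak{sl}_2)$-module of Gelfand--Kirillov dimension at most $1$ has finite length.

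The point I expect to require most care is the appeal to Theorem~\ref{ThmA} for the non-vanishing and essentiality of $\soc(T)$, on which the construction of $S_2$ rests. A self-contained alternative would be to first decompose $T$ by the generalized eigenvalues of $C_i$—a short $2\times2$ computation shows $C_i$ satisfies a fixed quadratic on $T$, so $T$ is supported at one or two central characters of $U(\fg)$—and then to invoke finiteness of length on the resulting $\mathfrak{sl}_2$-blocks; but the reference to Theorem~\ref{ThmA} is shorter.
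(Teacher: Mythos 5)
Your proof of the filtration statement is correct but takes a genuinely different route from the paper's. The paper does a case analysis on $\lambda_i$ (non-integral, $>1$, $=1$, $=0$) using the explicit decomposition $\Delta_\lambda\otimes V_i\cong\Delta_{\lambda+\epsilon_i}\oplus\Delta_{\lambda-\epsilon_i}$ together with Lemmata~\ref{ClassBG}, \ref{PropBG} and Theorem~\ref{ThmOut}, so that $S\otimes V_i$ is identified block by block. Your proof is uniform: adjunction and self-duality of $V_i$ show that every nonzero sub- or quotient module of $T$ lies outside $\mathrm{mod}_i$; reducibility of $T$ follows from $\dim\End_\fg(T)\ge 2$, via the nonzero evaluation map $S\otimes W_i\to S$ inside $\Hom_\fg\big(S\otimes(V_i\otimes V_i^\ast),S\big)$; and the filtration is then assembled using the essential socle supplied by Theorem~\ref{ThmA}. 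This reducibility argument is a pleasant ingredient that the paper does not make explicit (in the $\lambda_i=1$ case, the paper's decomposition a priori allows $\theta^{on}S=0$, i.e.\ $T$ simple, and your $\End$ computation is exactly what rules this out). The trade-off is that you invoke Theorem~\ref{ThmA}, so Section~\ref{sec5} no longer provides an independent, more elementary route to finite type socle for $\mathfrak{sl}_2^{\oplus j}$, which was part of the point of giving a GK-complete filtration via this lemma; the paper's case analysis stays inside the translation-functor formalism and does not presuppose Theorem~\ref{ThmA}.

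For the $j=1$ finite-length clause there is a real issue. Your primary justification cites the ``classical fact, recalled in Section~\ref{sec1}'' that $S\otimes E$ is artinian for $\mathfrak{sl}_2$; but Theorem~\ref{ThmSl2} \emph{reproves} precisely that fact, and its proof factors through Lemma~\ref{LemSl2int}, so feeding the classical fact into the lemma makes the paper's chain of reasoning circular as a self-contained derivation. Your alternative via Gelfand--Kirillov dimension is closer in spirit but is not a one-liner either: the assertion that a finitely generated $U(\mathfrak{sl}_2)$-module of $\GK\le 1$ has finite length does not follow just from the additivity of the Bernstein number in Lemma~\ref{LemGK}\eqref{LemGK.2}, because subquotients of $\GK$-dimension $0$ (which exist only for the trivial central character) contribute nothing to the Bernstein number and need a separate argument. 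The paper instead applies $\theta^{on}$ to the filtration, uses $\theta^{on}\theta^{out}\cong\Id^{\oplus 2}$ to force $\theta^{on}(F_1T/F_2T)=0$, and then invokes equation~\eqref{eqJan} to conclude $F_1T/F_2T$ is a finitely generated module annihilated by the annihilator of a finite-dimensional simple, hence finite-dimensional; this is self-contained and avoids both issues.
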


\begin{proof}
Let $\chi$ be the central character of~$S$. We take a dominant $\lambda\in\eta^{-1}(\chi)$, which means $\lambda_i\not\in\mZ_{<0}$.

Assume first that $\lambda_i\not\in\mZ$. It follows from 
\begin{equation}\label{eqsl2}
\Delta_\lambda\otimes V_i\;\cong\;\Delta_{\lambda+\epsilon_i}\oplus \Delta_{\lambda-\epsilon_i}
\end{equation}
and Lemmata~\ref{ClassBG} and~\ref{PropBG}\eqref{LemBGequiv} that $-\otimes V_i$ 
restricted to $U$-mod${}_\chi$ decomposes into a direct sum of an equivalence with 
$U$-mod${}_{\eta(\lambda+\epsilon_i)}$ and $U$-mod${}_{\eta(\lambda-\epsilon_i)}$. 
Hence $S\otimes V_i$ is a direct sum of two simple modules. 
That both are in $\simp^i$ follows by adjunction.

If $\lambda_i\in\mZ_{>1}$, the argument of the previous paragraph still applies.

If $\lambda=1$, we still have equation~\eqref{eqsl2}. We write $\theta^{on}=F_\lambda^{\lambda-\epsilon_i}$, as in Section~\ref{SecRed}.
By Lemmata~\ref{ClassBG} and \ref{PropBG}\eqref{LemBGequiv}, $S\otimes V_i$
is a direct sum of a simple module and $\theta^{on}S$.
By Theorem~\ref{ThmOut}(i), the latter module is zero or simple. 

Finally, we consider $\lambda_i=0$, which implies
$$\Delta_\lambda\otimes V_i\cong\; P_{\lambda-\epsilon_i}.$$
We write $\theta^{out}=F_\lambda^{\lambda-\epsilon_i}$ as in Section~\ref{SecRed}. Lemma~\ref{ClassBG} implies that $S\otimes V_i\cong \theta^{out}S$.  
That $S\otimes V_i$ is of the desired form then follows from Theorem~\ref{ThmOut}\eqref{ThmOut.1}.

Since $\theta^{on}\theta^{out}=\Id^{\oplus 2}$, it follows that $\theta^{on} F_1T/F_2T=0$.

For $\fg=\mathfrak{sl}_2$, equation~\eqref{eqJan} thus implies that $F_1T/F_2T$ must 
be a direct sum of a number of copies of one fixed simple finite dimensional module. 
Since $T$ is noetherian, it follows that $F_1T/F_2T$ is finitely generated and thus 
finite dimensional.
\end{proof}

\begin{lemma}\label{Lemf}
Let $S$ be a simple $U$-module in {\rm mod}${}_i$. Then $S\otimes V_i$ is a direct sum of simple modules in {\rm mod}${}_i$.
\end{lemma}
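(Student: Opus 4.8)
The plan is to use the finiteness of the $i$-th copy of $\mathfrak{sl}_2$ on $S$ to reduce to a statement about finite-dimensional representations of $\mathfrak{sl}_2$, and then transport semisimplicity back. First I would note that since $S\in\mathrm{mod}_i$, the action of the $i$-th copy $\mathfrak{s}:=\mathfrak{sl}_2$ of $\mathfrak{g}$ on $S$ integrates to a locally finite action, so $S$ decomposes as an $\mathfrak{s}$-module into a (possibly infinite) direct sum of finite-dimensional simple $\mathfrak{s}$-modules. The key point is that $S$ is also a module over the centralizer of $\mathfrak{s}$ in $\mathfrak{g}$ — more precisely, writing $\mathfrak{g}=\mathfrak{s}\oplus\mathfrak{g}'$ with $\mathfrak{g}'$ the sum of the other $j-1$ copies — and these two actions commute, so $S$ is a module over $U(\mathfrak{s})\otimes U(\mathfrak{g}')$. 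Moreover $V_i$ is a module over this same tensor-product algebra, trivial on $\mathfrak{g}'$.

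Next I would argue semisimplicity. The module $S\otimes V_i$, as a module over $U(\mathfrak{s})$ alone (forgetting $\mathfrak{g}'$), is locally finite, hence semisimple as an $\mathfrak{s}$-module. I want to upgrade this to semisimplicity as a $\mathfrak{g}$-module. One clean way: $S\otimes V_i$ is again in $\mathrm{mod}_i$ (the $\mathfrak{s}$-action is still locally finite), and on $\mathrm{mod}_i$ one has the endofunctor $-\otimes V_i$. The cleanest route is to use that on a block of $\mathrm{mod}_i$ with fixed $\mathfrak{s}$-central character the functor $-\otimes V_i$ behaves exactly as in the classical category-$\mathcal{O}$ story for $\mathfrak{sl}_2$: if $S$ has $\mathfrak{s}$-central character $\chi_i$ corresponding to a dominant integral weight $\lambda_i$, then for $\lambda_i\neq 0$ translation $-\otimes V_i$ is a sum of two equivalences onto neighbouring (still generic or still dominant-integral) blocks, each of which is semisimple-on-simples in the appropriate sense, while for $\lambda_i=0$ one uses Lemma~\ref{ClassBG} to identify $-\otimes V_i$ with $\theta^{out}\theta^{on}=\mathrm{Id}^{\oplus 2}$ composed with nothing. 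But the simplest self-contained argument avoids all this: decompose $S=\bigoplus_\alpha S_\alpha$ into $\mathfrak{s}$-isotypic components; then $S\otimes V_i=\bigoplus_\alpha S_\alpha\otimes V_i$, and each $S_\alpha$ is a $U(\mathfrak{g})$-submodule only after grouping — instead, use that the decomposition of $V_i\otimes V_i^*$-type translation on each isotypic piece is controlled by Clebsch–Gordan for $\mathfrak{sl}_2$, which is multiplicity-free, so $S\otimes V_i$ decomposes over the commuting algebra $U(\mathfrak{s})\otimes U(\mathfrak{g}')$ into pieces on which $\mathfrak{s}$ acts through a single fixed finite-dimensional simple; each such piece is then $F\otimes W$ with $F$ a fixed simple $\mathfrak{s}$-module and $W$ a $\mathfrak{g}'$-module, hence a $\mathfrak{g}$-module.

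Then I would reduce simplicity of the summands to a statement purely about $\mathfrak{g}'$. Having written $S\otimes V_i\cong\bigoplus_F F\boxtimes W_F$ (external tensor product over $\mathfrak{s}$ and $\mathfrak{g}'$), the $\mathfrak{g}$-submodules of $F\boxtimes W_F$, since $F$ is a simple $\mathfrak{s}$-module, are exactly $F\boxtimes W'$ for $\mathfrak{g}'$-submodules $W'\subseteq W_F$. So $S\otimes V_i$ is semisimple as a $\mathfrak{g}$-module if and only if each $W_F$ is semisimple as a $\mathfrak{g}'$-module. Here one invokes that $W_F$ is a subquotient of $S$ as a $\mathfrak{g}'$-module — indeed $S$ itself, viewed as $\mathfrak{g}'$-module, is $\bigoplus_F F^{\dim}\boxtimes$-stuff, so each $W_F$ appears inside $S|_{\mathfrak{g}'}$ up to multiplicity — and $S$ is a simple $\mathfrak{g}$-module. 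The cleanest finish: since $S\in\mathrm{mod}_i$, Kostant-type arguments or simply the fact that $S$ is simple over $U(\mathfrak{g})=U(\mathfrak{s})\otimes U(\mathfrak{g}')$ with $S$ locally finite over $U(\mathfrak{s})$ force $S\cong F_0\boxtimes S'$ for a single simple $\mathfrak{s}$-module $F_0$ and a simple $\mathfrak{g}'$-module $S'$ (a simple module over a tensor product $A\otimes B$ with the $A$-action locally finite and $A$ having all simples finite-dimensional is an external tensor product of simples — this is a standard density/bicommutant argument). Granting this, $S\otimes V_i\cong(F_0\otimes V_i)\boxtimes S'$, and $F_0\otimes V_i$ is a sum of two simple $\mathfrak{s}$-modules by Clebsch–Gordan, so $S\otimes V_i$ is a direct sum of (at most two) simple $\mathfrak{g}$-modules of the form $F\boxtimes S'$, all lying in $\mathrm{mod}_i$ since their $\mathfrak{s}$-action is finite-dimensional.

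The main obstacle is the structural fact that a simple $U(\mathfrak{g})$-module in $\mathrm{mod}_i$ factors as an external tensor product $F_0\boxtimes S'$ of a simple $\mathfrak{s}$-module with a simple $\mathfrak{g}'$-module. I would prove this via a bicommutant/density argument: $\mathrm{End}_{\mathfrak{g}'}(S)\supseteq U(\mathfrak{s})/\mathrm{Ann}$, and since $S$ is locally finite and semisimple over $\mathfrak{s}$, picking one isotypic component $F_0$ gives a $\mathfrak{g}'$-submodule structure, and simplicity of $S$ over $\mathfrak{g}$ forces the $\mathfrak{s}$-multiplicity space to be $\mathfrak{g}'$-simple while only one isotype $F_0$ occurs (otherwise a single isotypic component would be a proper nonzero $\mathfrak{g}$-submodule — note each $\mathfrak{s}$-isotypic component of $S$ is automatically $\mathfrak{g}'$-stable hence $\mathfrak{g}$-stable, so simplicity forces exactly one isotype). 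Everything after that is Clebsch–Gordan for $\mathfrak{sl}_2$, which is elementary.
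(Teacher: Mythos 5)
Your proof is correct, and it takes a genuinely different and more elementary route than the paper. The paper's proof observes that local $\mathfrak{s}$-finiteness forces the $\mathfrak{sl}_2$-Casimir eigenvalue to correspond to $\lambda_i\in\mZ_{>0}$, and then invokes the projective-functor mechanism already set up in the proof of Lemma~\ref{LemSl2int} (the decomposition $\Delta_\lambda\otimes V_i\cong\Delta_{\lambda+\epsilon_i}\oplus\Delta_{\lambda-\epsilon_i}$ together with Lemmata~\ref{ClassBG} and~\ref{PropBG}, distinguishing the cases $\lambda_i=1$ and $\lambda_i>1$, and for $\lambda_i=1$ using Theorem~\ref{ThmOut}\eqref{ThmOut.1} to identify $\theta^{on}S$ as zero or simple). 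You instead factor $S$ as an external tensor product: the $\mathfrak{s}$-isotypic components of $S$ are $\fg'$-stable (since $\fg'$ centralizes $\mathfrak{s}$) and hence $\fg$-stable, so simplicity forces a single isotype $F_n$, giving $S\cong F_n\boxtimes M$ with $M$ a simple $\fg'$-module; then $S\otimes V_i\cong(F_n\otimes V_i)\boxtimes M$ and Clebsch--Gordan for $\mathfrak{sl}_2$ together with the standard Burnside/density fact that $F\boxtimes M$ is $\fg$-simple whenever $F$ is a finite-dimensional simple $\mathfrak{s}$-module and $M$ a simple $\fg'$-module finishes the argument. This avoids the category-$\mathcal{O}$ and projective-functor machinery entirely, at the small cost of having to prove the (easy) external-tensor-product factorization, which the paper does not need because it is riding on infrastructure already in place for Lemma~\ref{LemSl2int}. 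Two cosmetic remarks: your first two paragraphs wander before arriving at the argument that actually works (the isotypic-component observation in the parenthetical remark of the final paragraph is the clean one, and supersedes the heavier bicommutant phrasing), and the Burnside step does implicitly use that the base field is $\mC$ (or at least that $\End_{\mathfrak{s}}(F_n)=\mC$), which is of course the standing hypothesis of Section~\ref{sec5}.
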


\begin{proof}
Let $\chi$ be the central character of~$S$.
The explicit description of the generator of the center in the
universal enveloping algebra of $\mathfrak{sl}_2$ implies 
that any dominant $\lambda\in\eta^{-1}(\chi)$ satisfies $\lambda_i\in\mZ_{>0}$. 
It then follows from the same arguments as in Lemma~\ref{LemSl2int} that 
$S\otimes V_i$ is simple, if $\lambda_i=1$, and is a direct sum of two simple 
modules if $\lambda_i>1$.
\end{proof}

\begin{corollary}\label{CorSl2}
For any simple $\fg$-module $S$ and $1\le i\le j$, 
the module $S\otimes V_i$ has a GK-complete filtration.
\end{corollary}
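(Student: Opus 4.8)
The plan is to argue by cases on $S$. Let $\chi$ be the central character of $S$ and fix a dominant weight $\lambda\in\eta^{-1}(\chi)$, so that $\lambda_i\notin\mZ_{<0}$. If $S\in\mathrm{mod}_i$, then Lemma~\ref{Lemf} shows that $S\otimes V_i$ is a finite direct sum of simple modules, and a semisimple module of finite length trivially admits a GK-complete filtration (filter by partial sums of its simple summands; every graded piece is then simple). Likewise, if $S\in\simp^i$ and $\lambda_i\neq 0$, the proof of Lemma~\ref{LemSl2int} shows that $S\otimes V_i$ is a direct sum of at most two simple modules, so again there is nothing to prove. The only case requiring work is therefore $S\in\simp^i$ with $\lambda_i=0$.

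In that case, by (the proof of) Lemma~\ref{LemSl2int} we have $T:=S\otimes V_i\cong\theta^{out}S$, and by Lemma~\ref{LemEas} together with Theorem~\ref{ThmOut}\eqref{ThmOut.1},\eqref{ThmOut.3} the module $T$ has simple socle equal to its simple top, say $S'=\soc(T)\cong\topp(T)$, with $[T:S']=\dim(V_i)=2$ and $\theta^{on}S'\cong S$. I would work with the filtration
$$0\subset\soc(T)\subset\rad(T)\subset T,$$
whose outer graded pieces $\soc(T)\cong S'$ and $T/\rad(T)\cong S'$ are simple of Gelfand--Kirillov dimension $\GK(S)=\GK(T)$ by Lemma~\ref{LemGKses}. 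By additivity of multiplicities along this filtration, the middle graded piece $Q:=\rad(T)/\soc(T)$ satisfies $[Q:S']=[T:S']-[\soc(T):S']-[T/\rad(T):S']=0$. Hence it is enough to show that every composition factor of $T$ other than $S'$ has Gelfand--Kirillov dimension strictly less than $\GK(S)$; this forces $\GK(Q)<\GK(T)$, so that the displayed filtration is GK-complete and we are done.

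To see this I would compare Bernstein numbers. On the one hand, equation~\eqref{eqJan-2} gives $\BN(T)=\dim(V_i)\,\BN(S)=2\,\BN(S)$. On the other hand, since the leading coefficient of the Hilbert--Samuel polynomial is additive on short exact sequences, iterating Lemma~\ref{LemGK}\eqref{LemGK.2} along a composition series of $T$ yields $\BN(T)=\sum_L [T:L]\,\BN(L)$, the sum taken over composition factors $L$ with $\GK(L)=\GK(T)$. The term of $S'$ in this sum is $[T:S']\,\BN(S')=2\,\BN(S')$, and the crucial point is that $\BN(S')=\BN(S)$: once this is known, the $S'$-term already equals $2\BN(S)=\BN(T)$, so no further composition factor of $T$ can have Gelfand--Kirillov dimension $\GK(T)$, which is exactly what is needed. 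The equality $\BN(S')=\BN(S)$ holds because $\theta^{on}S'\cong S$ and, for $\fg=\mathfrak{sl}_2(\mC)^{\oplus j}$, translation onto the $i$-th wall preserves the Bernstein number of those simple modules it does not annihilate; using the outer tensor factorization of simple $U(\fg)$-modules afforded by Dixmier's version of Schur's lemma, this reduces to the observation that in a regular block of $\mathfrak{sl}_2(\mC)$ the infinite-dimensional simple module is a simple Verma module, which translation onto the wall carries to the simple wall Verma module, both of Bernstein number $1$.

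I expect the main obstacle to be exactly this last point, namely pinning down the composition factors of $\theta^{out}S$ of top Gelfand--Kirillov dimension, i.e.\ controlling the Loewy structure of $\theta^{out}S$ well enough to see that its middle layer is ``small''. A cleaner route that bypasses the Bernstein-number bookkeeping altogether is to use the tensor factorization $S\cong S_1\boxtimes\cdots\boxtimes S_j$ from the outset: since $\lambda_i=0$, the factor $S_i$ is the infinite-dimensional simple module in a singular block of $\mathfrak{sl}_2(\mC)$, and $S\otimes V_i\cong(S_i\otimes\mC^2)\boxtimes W$ with $W=\boxtimes_{k\neq i}S_k$, where $S_i\otimes\mC^2$ is the uniserial $\mathfrak{sl}_2(\mC)$-module of Loewy length $3$ with layers $L\,|\,L'\,|\,L$ in which $L$ is infinite-dimensional and $L'$ finite-dimensional. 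Writing $e=\GK(W)$, this realizes a length-$3$ filtration of $S\otimes V_i$ whose two outer layers are isomorphic of Gelfand--Kirillov dimension $e+1=\GK(S\otimes V_i)$ and whose middle layer has Gelfand--Kirillov dimension $e<e+1$, so that the filtration is GK-complete.
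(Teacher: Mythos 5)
Your overall plan — reduce to $S\in\simp^i$ with $\lambda_i=0$, take the Loewy filtration $0\subset\soc(T)\subset\rad(T)\subset T$ of $T=\theta^{out}S$ supplied by Theorem~\ref{ThmOut}, and then compare Bernstein numbers via \eqref{eqJan-2} and Lemma~\ref{LemGK}\eqref{LemGK.2} — is the same strategy the paper uses via Lemma~\ref{LemSl2int}. You also correctly identify the crux: the filtration is GK-complete precisely when $\BN(S')=\BN(S)$, i.e.\ when translation to the wall preserves the Bernstein number.

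That crux, however, is exactly where your argument breaks down. Both of your proposed justifications for $\BN(S')=\BN(S)$ rest on the claim that, after factoring $S$ as an outer tensor product $S_1\boxtimes\cdots\boxtimes S_j$, the relevant factor ``is the infinite-dimensional simple module in a (regular, or singular) block of $\mathfrak{sl}_2(\mC)$, i.e.\ a simple Verma module.'' This is false: a given central character of $\mathfrak{sl}_2(\mC)$ is realized not only by highest-weight modules but also by the one-parameter family of dense weight modules, by Whittaker modules, and more. For such $S_i$ the module $S_i\otimes\mC^2$ is still of the form $\theta^{out}S_i$ with $[\theta^{out}S_i:S_i']=2$, but it is not the uniserial length-three object with a finite-dimensional middle layer that you describe, so your final reduction does not apply. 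In other words, you are asserting without proof precisely the statement that needs to be proved in this generality.

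The paper sidesteps this difficulty with a minimality device that your proposal is missing. It defines an equivalence relation on simple modules in $\simp^i$ of fixed Gelfand--Kirillov dimension $d$, generated by the existence of a non-zero map $S\otimes V_i\to S'$ or $S'\to S\otimes V_i$, and first treats an $S$ whose Bernstein number $e:=\BN(S)$ is \emph{minimal} in its class. For such $S$ the two estimates $\BN(S_1)+\BN(S_2)\le 2e$ (subadditivity along the filtration, Lemma~\ref{LemGK}\eqref{LemGK.2}) and $\BN(S_1),\BN(S_2)\ge e$ (minimality, since $S_1,S_2$ lie in the same class as $S$) force $\BN(S_1)=\BN(S_2)=e$, giving the desired $\GK(F_1T/F_2T)<d$ with no structural information about $S$ needed. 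An iteration then shows that every simple module in the class has Bernstein number $e$, so the minimality assumption was vacuous. This is the missing ingredient: it proves, rather than assumes, that translation to the wall preserves $\BN$ for all simple modules in $\simp^i$, including the dense and non-weight modules your argument overlooks. To repair your proof you would need to either import this minimality argument, or give an independent proof of the invariance of $\BN$ under $\theta^{on}$ that covers all simple $\mathfrak{sl}_2$-modules with regular central character, not just highest-weight ones.
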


\begin{proof} 
By Lemma~\ref{Lemf}, we can assume $S\in \simp^i$.

We fix an arbitrary $d\in\mN$.
First we define an equivalence relation on simple $\fg$-modules in $\simp^i$ with 
Gelfand-Kirillov dimension $d$. We let $\sim$ be the minimal equivalence relation 
generated by relation $S\sim S'$ if we have a non-zero morphism 
$S\otimes V_i\to S'$ or $S'\to S\otimes V_i$. Since $V_i$ is self-dual, 
this is indeed an equivalence relation.

Take a simple module $S$ with $\GK(S)=d$ and $\BN(S)$ minimal in its equivalence class. Set $e:=\BN(S)$. By equation~\eqref{eqJan-2}, we have 
$\BN(S\otimes V_i)=2e$. The simple modules $S_1,S_2$ in Lemma~\ref{LemSl2int} thus satisfy $\BN(S_1)+\BN(S_2)\le 2e$ by Lemma~\ref{LemGK}(ii). By minimality of~$e$, we thus have $\BN(S_1)=e=\BN(S_2)$. With notation of Lemma~\ref{LemSl2int}, it follows that
we have $\GK(F_1T/F_2T)<d$. Consequently $S\otimes V_i$ has a GK-complete filtration.

The above paragraph also shows that $S_1$ and $S_2$ are the unique simple modules which appear as submodules or quotients of~$S\otimes V_i$, by Lemma~\ref{LemGKses}. 
Applying the same procedure to $S_1,S_2$ iteratively shows that every simple module in the equivalence class of~$S$ has Bernstein number $\BN(S)$. Hence the condition for $S$ to have minimal $\BN(S)$ in its equivalence class was not actually a restriction. This completes the proof.
\end{proof}

\begin{proof}[Proof of Theorem~\ref{ThmSl2}]
Every finite dimensional $\fg$-module is a direct sum of a module of the form 
$$\bigotimes_{1\le i\le j} V^{\otimes \alpha_i}_i,$$
for suitable $\alpha_i\in\mN$. The statement about GK-complete filtrations thus follows from Corollary~\ref{CorSl2}. The statement for socles then follows from Corollary~\ref{completeCor}.

The claim for~$\mathfrak{sl}_2$ follows from Lemma~\ref{LemSl2int}. 
\end{proof}

\section{Lie superalgebra preliminaries}\label{sec6}

\subsection{Setup}\label{sec6.0}  

In this section, we will introduce the setup of Lie superalgebras.  
We refer to \cite{ChWa,Musson} for more details. From now on we work over the field $\mC$ of complex numbers.

We let $\widetilde{\fg}$ be a finite-dimensional Lie superalgebra with $\mathbb Z_2$-graded decomposition
\begin{displaymath}
\widetilde{\fg}=\widetilde{\fg}_{\bar{0}}\oplus \widetilde{\fg}_{\bar{1}}.
\end{displaymath}
From now on we assume that $\fg:=\widetilde{\fg}_{\bar{0}}$ is a reductive Lie algebra of type~$A$
and that $\widetilde{\fg}_{\bar{1}}$ is a semi-simple $\widetilde{\fg}_{\bar{0}}$-module.
The Weyl group $W$ of $\widetilde{\fg}$ is defined to be the Weyl group of the reductive Lie algebra $\fg$
and we keep notation and terminology of Section \ref{sec2.4}.



Let $V = V_{{\bar {0}}}\oplus V_{\bar{1}}$ be a superspace. For a given homogeneous 
element $v\in V_{i}$, where $i \in \mathbb{Z}_2$, we let $\overline {v}$= $i$ denote its parity. 
We denote  the parity change functor by $\Pi$ on the category of superspaces,
cf. \cite[Section 1.1.1]{ChWa}. For a $\mZ_2$-graded associative algebra $A$, we denote
by $A$-smod the category of all finitely generated $\mZ_2$-graded modules with grading preserving
 homomorphisms. Note that when $A$ is reduced, {\it i.e.} $A=A_{\oa}$, we have
 $$A\mbox{-smod}\;\cong\; A\mbox{-mod}\,\oplus\, \Pi (A\mbox{-mod}).$$

\subsection{Categories of (super)modules}\label{sec6.2}     

We denote  the universal enveloping algebras  of our Lie (super)algebras 
by $\widetilde{U} := U(\wfg)$ and 
$U:=U(\fg)=U(\wfg_{\bar {0}})$. Let $Z(\wfg)$ 
and $Z(\fg)$ denote the center of $\widetilde{U}$ and $U$, 
respectively. Also, the center of $\mathfrak g$ is denoted  by 
$\mathfrak z(\mathfrak g)$.

We set $\wfg$-smod $= \widetilde{U}$-smod and $\fg$-smod 
$= U$-smod. By our assumptions in \ref{sec6.0} a supermodule $M$ over a Lie superalgebra 
is not necessary isomorphic to its parity changed counterpart $\Pi M$. 
  We have the exact restriction, induction and coinduction functors 
\begin{gather*}
\mathrm{Res}:= \mathrm{Res}_{\widetilde{\fg}_{\bar {0}}}^{\wfg}:\wfg \text{-smod}\to 
\wfg_{\bar {0}}\text{-smod},\\
\mathrm{Ind}:=\mathrm{Ind}_{\wfg_{\bar {0}}}^{\wfg}: 
\wfg_{{\bar {0}}}\text{-smod}\rightarrow  \wfg \text{-smod},\\
\mathrm{Coind}:=\mathrm{Coind}_{\wfg_{\bar {0}}}^{\wfg}: 
\wfg_{{\bar {0}}}\text{-smod}\rightarrow  \wfg \text{-smod}.
\end{gather*}
By \cite[Theorem 2.2]{BF} (also see \cite{Go}), the functors
$\mathrm{Ind}$ and $\mathrm{Coind}$ are isomorphic, up to the equivalence given by 
tensoring with the one-dimensional $\fg$-module on the
top degree subspace of $\Lambda \wfg_{{\bar{1}}}$.

By abusing notation we let $\cO$ be the full subcategory of $U$-smod of weight 
modules which are locally $\fb$-finite. In principle, we would
have to write $\cO\oplus\Pi\cO$ for this category, in order to be consistent 
with Section~\ref{sec2.4}. Similarly, we let $L_\la$ denote the simple 
$\fg$-module in unspecified parity. We denote by $\widetilde{\cO}$ the full 
subcategory of $\widetilde{U}$-smod of supermodules which restrict to $\cO$.

For a given $\wfg$-supermodule (resp. $\fg$-supermodule) $X$, 
we denote its $\widetilde U$-annihilator (resp. $U$-annihilator) by $\text{Ann}_{\widetilde U}(X)$ 
(resp.  $\text{Ann}_{U}(X)$). The following theorem is due 
to Duflo in \cite{Duflo}.

\begin{theorem} \label{Dufthm}
Let $V$ be a simple $\fg$-supermodule. 
Then there exist $\lambda \in \mathfrak h^*$ 
such that $\emph{Ann}_{U}(V) = \emph{Ann}_{U}(L_\la)$.
\end{theorem}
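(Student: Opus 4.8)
The result is a theorem of Duflo, and I would present it, as the authors do, with a reference to \cite{Duflo}; the following records the strategy I would follow to reconstruct the argument. Since $\fg=\widetilde\fg_{\bar 0}$ is purely even, $U=U(\fg)$ carries the trivial $\mZ_2$-grading, so in any $\mZ_2$-graded $U$-module both homogeneous components are $U$-submodules; hence a simple $\fg$-supermodule $V$ is, as an ungraded module, a simple $U$-module $S$, and $\Ann_U(V)=\Ann_U(S)$. Writing $\fg$ as the sum of its centre and a semisimple ideal, and observing that the centre (which we include in $\fh$) acts on $S$ by scalars, we reduce to $\fg$ semisimple; nothing past this point uses type $A$. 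By Dixmier's version of Schur's lemma (invoked already in Lemma~\ref{LemSimpTop}), $Z(\fg)$ acts on $S$ through a character $\chi=\eta(\lambda)$, and we fix $\lambda\in\fh^\ast$ dominant. What remains is precisely Duflo's theorem: every primitive ideal of $U$ is the annihilator of a simple highest weight module $L_{\lambda'}$.

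The plan is to recover $\Ann_U(S)$ from a Harish--Chandra bimodule built out of $S$ and the Verma module $\Delta_\lambda$. It rests on two structural inputs of Duflo (and Kostant): that $\Ann_U(\Delta_\lambda)$ is the \emph{minimal} primitive ideal with central character $\chi$, so $\Ann_U(\Delta_\lambda)\subseteq\Ann_U(S)$; and that the bimodule $X:=\cL(\Delta_\lambda,S)$ of $\mathrm{ad}\,\fg$-finite linear maps $\Delta_\lambda\to S$ is nonzero. Its left annihilator is exactly $\Ann_U(S)$ (immediate from post-composition), and the evaluation map $X\otimes_U\Delta_\lambda\to S$, $f\otimes m\mapsto f(m)$, is a $U$-module surjection onto the simple module $S$; this forces $\Ann_U(S)$ to equal both the left annihilator of $X$ and $\Ann_U(X\otimes_U\Delta_\lambda)$. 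Since $X\otimes_U\Delta_\lambda$ lies in the block of category $\cO$ of central character $\chi$, its annihilator is a finite intersection of the primitive ideals $\Ann_U(L_{w\lambda})$; as $\Ann_U(S)$ is prime, it coincides with one of them, giving $\Ann_U(S)=\Ann_U(L_{\lambda'})$ for a suitable $\lambda'=w\lambda$. The singular and non-integral cases of $\chi$ are then reduced to the regular integral one by passing through the translation functors $\theta^{on},\theta^{out}$ of Theorem~\ref{ThmOut}, using the annihilator monotonicity \eqref{eqJan} to transport the conclusion.

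Everything formal here — the evaluation epimorphism, the squeezing of annihilators, membership in a block of $\cO$, and the prime-intersection step — is routine. The substance lies entirely in the two cited inputs: the identification of the minimal primitive ideal as $\Ann_U(\Delta_\lambda)$ and, above all, the non-vanishing of the Harish--Chandra bimodule $\cL(\Delta_\lambda,S)$; the latter I expect to be the real obstacle. Finally, although this section works with $\fg$ of type $A$, one should not expect to bypass Duflo's theorem by invoking the projective-functor results of Sections~\ref{sec2.5}--\ref{TypeA}: the Kazhdan--Lusztig cell combinatorics used there presupposes the structure theory of primitive ideals that Duflo's theorem initiates, so such a route would at best rearrange, not remove, the essential content.
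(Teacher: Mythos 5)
The paper offers no proof of this statement at all; it is simply quoted from Duflo's work with the reference \cite{Duflo}, and you correctly handle it the same way. Your preliminary reduction is sound (since $U=U(\fg)$ is purely even, a graded-simple $\fg$-supermodule is concentrated in a single degree and is a simple ungraded $U$-module, so $\Ann_U(V)$ is a primitive ideal of $U$), and the sketch you append — minimality of $\Ann_U(\Delta_\lambda)$ among primitives with central character $\chi$, non-vanishing of the Harish--Chandra bimodule $\cL(\Delta_\lambda,S)$, the evaluation surjection $\cL(\Delta_\lambda,S)\otimes_U\Delta_\lambda\twoheadrightarrow S$ identifying all the relevant annihilators, and the primeness argument against the composition factors $L_{w\lambda}$ — is a faithful outline of the standard proof of Duflo's theorem, with the genuine difficulty (the non-vanishing of $\cL(\Delta_\lambda,S)$) correctly singled out.
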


We let $\cF$ denote the category of finite dimensional semisimple 
$\fg$-supermodules, and we let~$\widetilde\cF$ denote 
the category of finite-dimensional $\widetilde\fg$-supermodules which restrict to objects 
in~$\cF$. Then $\mathcal F$ and $\widetilde{\cF}$ are exactly
the categories of finite-dimensional weight $\fg$-supermodules and 
$\wfg$-supermodules, respectively.

For a $\wfg$-supermodule $M$, we denote by 
\begin{itemize}
\item $\widetilde{\cF}\otimes M$ the category of 
$\wfg$-supermodules of the form $V\otimes M$, with $V\in\widetilde{\cF}$;
\item $\add(\widetilde\cF\otimes M)$ the category of all supermodules isomorphic to direct summands 
of objects in $\widetilde\cF\otimes M$;
\item $\langle \widetilde\cF\otimes M\rangle$ the abelian category of all supermodules isomorphic to 
subquotients of supermodules in~$\widetilde\cF\otimes M$. 
\end{itemize}
 
We let~$\mathrm{Coker}(\widetilde\cF\otimes M)$ denote the {\em coker-category} of $M$, that is the 
full subcategory of the 
category of all $\wfg$-supermodules, which consists of all modules $N$ that have a presentation 
$$X\to Y \to N\to 0,$$ where 
$X,Y\in \add(\widetilde\cF\otimes M)$. 
Similarly we define analogous full subcategories of 
$\fg$-supermodules, cf \cite{MaSt08}.




\subsection{Harish-Chandra bimodules}\label{SeHCB}
Following \cite[\S 3.2]{CC}, we will consider a type of Harish-Chandra bimodules where the left action is by a Lie superalgebra
and the right action by the underlying Lie algebra. The corresponding category will be an essential tool in our study of the rough
structure.

 We write $\widetilde{U}$-smod-$U$ for $\widetilde{U}\otimes U^{\op}$-smod.
For a bimodule $Z$ in this category, the $\fg$-module $Z^{\mathrm{ad}}$ 
is the restriction of $Z$
to the adjoint action of $\fg$. This is the restriction via 
$$U\hookrightarrow \widetilde{U}\otimes U^{\op},\qquad X\mapsto X\otimes 1-1\otimes X,\mbox{ for all $X\in\fg$.}$$

Let $\cB$ denote the full subcategory of $\widetilde{U}$-smod-$U$ of bimodules $N$ 
for which $N^{\mathrm{ad}}$ is a direct sum of modules in~$\cF$ with finite multiplicities.  
For a two-sided ideal $J\subset U$, we let $\cB(J)$ denote the full 
subcategory of~$\cB$ of bimodules $X$ such that $XJ=0$. 
We have the functor
$$\cL(-,-):\,  (U\mbox{-smod})^{\op}\times \widetilde{U}\mbox{-smod}\to \cB,$$ where
$\cL(M,N)$ is the maximal submodule of $\Hom_{\mC}(M,N)$ 
which belongs to $\cB$.

By slight abuse of notation, we will use the same notation $\cL$ for the corresponding functor applied to the case $\widetilde{\fg}=\fg$.
Let $M$ be a $U$-module, then the $\fg$-action on $M$ defines a bimodule homomorphism from $U$ to $\mathcal L(M,M)$ with kernel $\text{Ann}_{U}(M)$.
Hence we have a canonical monomorphism
\begin{equation}\label{KSProb}
U/\Ann_{U}(M)\hookrightarrow \cL(M,M). 
\end{equation}
One says that {\em Kostant's problem} for $M$ has a positive solution if \eqref{KSProb} is 
an isomorphism, see \cite{Jo80,Go02,MaMe12}.  Then we have the following variation of~\cite[Theorem~3.1]{MSo} 
established in \cite[Theorem~3.1]{CC}. 

\begin{theorem}\label{BIEquiv}
Consider $M\in U\mbox{{\rm-mod}}$ with central character and set $I:=\Ann_{U}(M)$. 
If the Kostant problem for $M$ has a positive solution and $M$ is projective in 
$\langle \cF\otimes M\rangle$, then 
$$-\otimes_{U}M\,:\;\;\; \mathcal B(I)\,\to\, \mathrm{Coker}(\widetilde{\cF}\otimes \mathrm{Ind}(M))$$
is an equivalence of categories with inverse $\mathcal L(M,-)$.
\end{theorem}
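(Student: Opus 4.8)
The plan is to prove that $-\otimes_U M$ and $\cL(M,-)$ are mutually inverse equivalences between $\cB(I)$ and $\mathrm{Coker}(\widetilde{\cF}\otimes\mathrm{Ind}(M))$ by reducing, as far as possible, to the known Lie-algebra statement \cite[Theorem~3.1]{MSo} and its super-analogue \cite[Theorem~3.1]{CC}. First I would recall the adjunction $(\,-\otimes_U M,\ \cL(M,-)\,)$ between $\widetilde U$-smod-$U$ and $\widetilde U$-smod: for a bimodule $Z$ and a supermodule $N$ one has $\Hom_{\widetilde U}(Z\otimes_U M,N)\cong\Hom_{\widetilde U\text{-smod-}U}(Z,\cL(M,N))$, which is a formal consequence of the defining property of $\cL$ as the largest sub-bimodule of $\Hom_\mC(M,N)$ lying in $\cB$, together with the fact that $Z^{\mathrm{ad}}$ lies in $\cF$ with finite multiplicities. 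The unit of this adjunction is $Z\to\cL(M,Z\otimes_U M)$ and the counit is $\cL(M,N)\otimes_U M\to N$; the whole theorem amounts to showing that, restricted to $\cB(I)$ on one side and $\mathrm{Coker}(\widetilde{\cF}\otimes\mathrm{Ind}(M))$ on the other, unit and counit are isomorphisms.

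Next I would treat the counit. On the generators $\widetilde{\cF}\otimes\mathrm{Ind}(M)=\widetilde{\cF}\otimes(\widetilde U\otimes_U M)$, one uses the projection/tensor-identity isomorphism $\widetilde U\otimes_U M\cong\cL(M,\widetilde U\otimes_U M)\otimes_U M$, i.e. one checks the counit is an isomorphism on $\add(\widetilde{\cF}\otimes\mathrm{Ind}(M))$. This is where the hypotheses enter: that Kostant's problem for $M$ has a positive solution gives $U/I\xrightarrow{\sim}\cL(M,M)$ (the map \eqref{KSProb}), and that $M$ is projective in $\langle\cF\otimes M\rangle$ lets one commute $\cL(M,-)$ past tensoring with finite-dimensional (super)modules and past the induction functor $\mathrm{Ind}=\widetilde U\otimes_U-$. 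Concretely, for $V\in\widetilde{\cF}$ one wants $\cL(M,V\otimes\mathrm{Ind}(M))\cong V\otimes\cL(M,\mathrm{Ind}(M))$ and $\cL(M,\mathrm{Ind}(M))\cong\widetilde U\otimes_U\cL(M,M)\cong\widetilde U\otimes_U(U/I)$, the last step again by Kostant-positivity. Once the counit is an isomorphism on projective generators, right-exactness of $-\otimes_U M$ and of $\cL(M,-)\circ(\text{forget})$ together with the presentation $X\to Y\to N\to 0$ defining the coker-category extends it to all of $\mathrm{Coker}(\widetilde{\cF}\otimes\mathrm{Ind}(M))$ by the five lemma; here one must verify that $\cL(M,-)$ applied to an object of the coker-category lands in $\cB(I)$ (it kills $I$ because $\cL(M,N)\subset\Hom_\mC(M,N)$ and $MI=0$) and that $\cL(M,-)$ is right exact on this subcategory, which follows from $M$ being projective in the relevant category so that $\Hom_\mC(M,-)$ — and hence its sub-bimodule $\cL(M,-)$ — behaves well on short exact sequences of objects built from $\cF\otimes M$.

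For the unit, I would argue dually: one needs $Z\xrightarrow{\sim}\cL(M,Z\otimes_U M)$ for every $Z\in\cB(I)$. Every $Z\in\cB$ has, by definition, a presentation by sums of objects of the form $E\otimes(U/I)$ with $E\in\cF$ acting adjointly — this is the standard "every Harish-Chandra bimodule killed by $I$ is a quotient of a projective one of the form $E\otimes U/I$" statement, which in the non-super case is the content of the Lie-algebra version and whose proof goes through verbatim using only that $U$ has type $A$ even part and that $Z^{\mathrm{ad}}\in\add\cF$. One checks the unit is an isomorphism on such $E\otimes(U/I)$ using the already-established counit computation and Kostant-positivity, and then extends to all of $\cB(I)$ by right-exactness and the five lemma. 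The main obstacle I expect is precisely the commutation of $\cL(M,-)$ with $\mathrm{Ind}$ (equivalently, with $\widetilde U\otimes_U-$) and with tensoring by finite-dimensional supermodules: $\cL$ is defined as a maximal sub-bimodule lying in $\cB$, so these commutations are not formal and require the projectivity of $M$ in $\langle\cF\otimes M\rangle$ to control $\Hom_\mC(M,-)$, plus the finiteness of $\widetilde{\fg}_{\bar 1}$ (so that $\mathrm{Ind}(M)=\Lambda\widetilde{\fg}_{\bar 1}^*\otimes M$ as a $\fg$-module up to the twist from \cite[Theorem~2.2]{BF}) to keep everything inside $\cB$. Everything else is a transcription of the arguments of \cite[Theorem~3.1]{MSo} and \cite[Theorem~3.1]{CC} into this slightly more general setting, so I would cite those proofs for the bookkeeping and concentrate the exposition on the two commutation lemmas and the Kostant-positivity input.
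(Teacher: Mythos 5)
The paper does not prove Theorem~\ref{BIEquiv} itself; it cites it as established in \cite[Theorem~3.1]{CC}, following the template of \cite[Theorem~3.1]{MSo}. Your plan is the same template: set up the adjunction $(-\otimes_U M,\,\cL(M,-))$, verify unit and counit on projective generators using Kostant positivity and the projectivity of $M$ in $\langle\cF\otimes M\rangle$, and extend by presentations. That is structurally the right thing. However, several of the details as you state them are wrong and one step is a genuine logical gap.

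First, the projective generators of $\cB(I)$ are of the form $E\otimes \widetilde U/\widetilde U I$ with $E\in\widetilde{\cF}$, not $E\otimes(U/I)$ with $E\in\cF$; objects of $\cB$ are $\widetilde U$-$U$-bimodules, so the left factor must carry the $\widetilde U$-structure and the tensoring finite-dimensional pieces must come from $\widetilde{\cF}$ (the paper's own Lemma~\ref{cor7.5-1} records this fact from \cite[Proof of Theorem~3.1]{CC}). Moreover, this is a theorem, not something true ``by definition'' of $\cB$. Second, your invocation of ``$U$ has type~$A$ even part'' is a red herring here: the hypotheses of the theorem are only a central character, positivity of Kostant's problem, and projectivity of $M$; type~$A$ enters the paper elsewhere (projective functors, Kazhdan--Lusztig combinatorics), not in this bimodule equivalence. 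Third, the claim that $\cL(M,-)$ ``behaves well on short exact sequences'' because it is a sub-bimodule of the exact functor $\Hom_\mC(M,-)$ is a non-sequitur: a subfunctor of an exact functor need not be right exact, and indeed $\cL(M,-)$, being a right adjoint, is only left exact a priori. The correct route is to establish that $-\otimes_U M$ is right exact, fully faithful on projectives, and carries $\add(\widetilde\cF\otimes\widetilde U/\widetilde U I)$ onto $\add(\widetilde\cF\otimes\mathrm{Ind}(M))$, and then invoke the standard lemma that such a functor between coker-categories is an equivalence, with the adjoint $\cL(M,-)$ automatically its quasi-inverse; the exactness of $\cL(M,-)$ on the coker-category is a consequence of this, not an input you can assume. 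Finally, a small slip: $\mathrm{Ind}(M)\cong\Lambda\widetilde{\fg}_{\bar1}\otimes M$ as a $\fg$-module by PBW, while it is $\mathrm{Coind}(M)$ that involves $\Lambda\widetilde{\fg}_{\bar1}^{\ast}$; \cite[Theorem~2.2]{BF} compares $\mathrm{Ind}$ to $\mathrm{Coind}$ rather than giving the formula you wrote.
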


For $E \in\widetilde{\cF}$, we recall that $E\otimes \widetilde{ U}$ is equipped with a natural 
$\wfg$-bimodule structure as in \cite[Section 2.2]{BG} and \cite[Section 2.4]{Co16}: 
\[ X(v\otimes u)Y = (Xv)\otimes (uY) +(-1)^{\overline X\cdot \overline v}v\otimes (XuY),\]
for all homogeneous $X,Y\in \wfg$, $v\in E$ and $u \in \widetilde{U}$.

\section{Rough structure of simple $\widetilde{\fg}$-supermodules}\label{sec7}

\subsection{Motivation}\label{sec7.1}

Two fundamental problems in  representation theory of a group or a ring are:
\begin{itemize}
\item classification of simple modules;
\item understanding how all modules are constructed from simple modules.
\end{itemize}
A natural subproblem of the second problem is determination of multiplicities of
simple subquotients in a given module. In the general case, certain multiplicities might be infinite
or depend on the choice of filtration. 

The paper \cite{KhMa04} studied the structure of a certain class of modules over Lie algebras, called
generalized Verma modules. These modules are obtained by parabolic induction from simple modules over
``smaller'' Lie algebras. It turned out that, given a generalized Verma module, there is a natural
class of simple modules (defined using a certain comparability of annihilators) whose multiplicities 
in the generalized Verma module are well-defined, finite and computable using Kazhdan-Lusztig combinatorics. 
This was called the {\em rough structure} of generalized Verma modules. The most general, to date, 
result on the rough structure of generalized Verma modules was obtained in \cite[Section~11.8]{MaSt08}.

In the study of Lie superalgebras, it is natural to ask about the rough structure of simple
supermodule as modules over the even part of the Lie superalgebras. In \cite{CM},
for classical Lie superalgebras of type $I$, it was shown that
\begin{itemize}
\item the classification of simple supermodules can be reduced to the classification of simple modules
over the even part of the superalgebra;
\item the rough structure of a simple
supermodule as a module over the even part of the Lie superalgebra can be described in terms
of combinatorics of category $\mathcal{O}$.
\end{itemize}

The goal of this section is to address these problems for Lie superalgebra not necessarily of type I, but with underlying Lie algebra of type $A$.

\subsection{Coker categories for induced modules} \label{SeCok} 

With Theorem \ref{BIEquiv} as the main tool, this subsection shall proceed with the study of 
coker-categories of induced modules along the lines of \cite[Section 11.6]{MaSt08}. 

For simplicity, we will work with regular integral central characters by following \cite[Remark~76]{MaSt08}.
The general case then follows by standard techniques, in particular using translations
out and onto the walls and the equivalences from \cite{ChMaWa13}.

We start with reviewing and adapting the setup of \cite[Section~11.6]{MaSt08} to the present paper. 
Note that we work in the generality when $\fg$ is reductive. Each central element of  $\fg$ acts as
a scalar on any simple $\fg$-module, therefore we will work only with $\fg$-modules on which 
the action of the center of $\fg$ is semi-simple. We assume that the center of $\fg$ belongs to
any Cartan subalgebra and we call a weight {\em integral} if it appears in some simple finite
dimensional $\fg$-module.

Let $V$ be a simple $\fg$-supermodule which admits a regular and 
integral central character. 
By Theorem~\ref{Dufthm},  there is 
a dominant weight $\nu$ and an element $\sigma\in W$ such that 
$\text{Ann}_{U} (L) = \text{Ann}_{U}(L_{\sigma\nu})$. 
We may assume that $\sigma$ is contained in a right cell associated with a parabolic 
subalgebra $\mathfrak p \subseteq \fg$ as in \cite[Remark 14]{MaSt08}. Therefore there 
is a dominant weight $\mu$ such that the parabolic block $\mathcal O^{\mathfrak p}_{\mu}$ 
contains exactly one simple module $L_{y \mu}$ and this module is projective, 
see e.g. \cite[3.1]{IrSh88}. As $y \mu=yx \mu$, for any $x$ in the stabilizer
of $\mu$, without loss of generality, we may assume that $ys<y$, for all 
simple reflection $s$ with $s \mu=\mu$. With this assumption, we have that
$L_{y \mu}$ is the translation of $L_{y 0}$ to the $\mu$-wall.

Tensoring, if necessary, with finite dimensional modules,
without loss of generality we may assume that 
$\mu$ is  {\em generic} in the sense of \cite[Subsection~5.3]{MaMe12}.  
Let $F$ be the projective functor given in 
\cite[Proposition 61]{MaSt08} and define $\overline N$ to be the simple quotient of $FL$. We refer the reader to 
\cite[Section~11]{MaSt08} for more details of our setup. In particular, we have that 
\begin{align} \label{SameAnn}
&I:=\text{Ann}_{U}(\overline N) = \text{Ann}_{U}( L_{y \mu}).
\end{align}

The following theorem, which is  \cite[Theorem 66]{MaSt08}, is our main tool to study 
the rough structure for Lie algebras of type~$A$. 

\begin{theorem}\label{thmrsms}  
Let $\mathcal F$ denote the category of 
finite-dimensional weight $\fg$-su\-per\-mo\-du\-les. Then the functor
$$\Xi: = \mathcal L(\overline N, -)\otimes_{U} L_{y\mu}: \mathrm{Coker}(\mathcal F \otimes \overline N) \rightarrow \mathrm{Coker}(\mathcal F \otimes L_{y \mu})$$ 
is an equivalence of categories.
\end{theorem}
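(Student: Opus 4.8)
The plan is to obtain Theorem~\ref{thmrsms} by applying Theorem~\ref{BIEquiv} twice and composing the two resulting equivalences through a common bimodule category. Both applications are in the degenerate situation $\wfg=\fg$, where $\ind$ is the identity functor, $\widetilde{\cF}=\cF$, and $\cB$ is the category of $U$-$U$-bimodules: I would apply the theorem once to $M=\overline N$ and once to $M=L_{y\mu}$. By~\eqref{SameAnn} these two modules share the annihilator $I:=\Ann_U(\overline N)=\Ann_U(L_{y\mu})$, so the two applications land in the \emph{same} category $\cB(I)$, and Theorem~\ref{BIEquiv} provides mutually inverse equivalences
$$
-\otimes_U\overline N\colon\ \cB(I)\ \rightleftarrows\ \mathrm{Coker}(\cF\otimes\overline N)\ \colon\cL(\overline N,-)
$$
and
$$
-\otimes_U L_{y\mu}\colon\ \cB(I)\ \rightleftarrows\ \mathrm{Coker}(\cF\otimes L_{y\mu})\ \colon\cL(L_{y\mu},-).
$$
Since $\Xi=(-\otimes_U L_{y\mu})\circ\cL(\overline N,-)$ by definition, it is a composite of equivalences, hence an equivalence, with quasi-inverse $(-\otimes_U\overline N)\circ\cL(L_{y\mu},-)$.

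So the real content is to verify, for each of $M\in\{\overline N,\,L_{y\mu}\}$, the three hypotheses of Theorem~\ref{BIEquiv}: that $M$ has a central character, that $M$ is projective in $\langle\cF\otimes M\rangle$, and that Kostant's problem for $M$ has a positive solution. For $M=L_{y\mu}$ the central character is the (regular integral) one attached to $\mu$; projectivity in $\langle\cF\otimes L_{y\mu}\rangle$ is deduced from projectivity of $L_{y\mu}$ in the parabolic block $\cO^{\mathfrak p}_\mu$, which holds by the choice of $\mu$ (see e.g.\ \cite[3.1]{IrSh88}); and positivity of Kostant's problem is ensured by the genericity of $\mu$ in the sense of \cite[Subsection~5.3]{MaMe12} together with the normalisation $ys<y$ for all simple reflections $s$ fixing $\mu$, exactly as in \cite[Section~11.6]{MaSt08}. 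For $M=\overline N$ the central character coincides with that of $L_{y\mu}$ by~\eqref{SameAnn}; projectivity of $\overline N$ in $\langle\cF\otimes\overline N\rangle$ follows from the properties of the projective functor $F$ of \cite[Proposition~61]{MaSt08}; and positivity of Kostant's problem for $\overline N$ is again a feature of the generic setup, cf.\ \cite[Section~11.6]{MaSt08}. The super-theoretic layer is harmless throughout: since $\fg$ is purely even, $\fg\mbox{-smod}\cong\fg\mbox{-mod}\oplus\Pi(\fg\mbox{-mod})$, so one applies everything to the underlying $U$-modules, carries the parity along formally, and all the categories and equivalences above inherit compatible $\mZ_2$-gradings.

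The step I expect to be the main obstacle is the verification of Kostant's problem for $\overline N$ and for $L_{y\mu}$; this is exactly the delicate point that dictated the successive normalisations in the setup (passing to a right cell attached to a parabolic subalgebra, arranging $ys<y$, and tensoring with finite-dimensional modules to make $\mu$ generic). Once one is willing to invoke the corresponding statements of \cite[Section~11.6]{MaSt08}, which ultimately rest on the analysis of Kostant's problem in \cite{MaMe12}, the argument becomes purely formal, being the double application of Theorem~\ref{BIEquiv} and the composition of equivalences from the first paragraph; the write-up should then mainly consist of matching the present (reductive, super) setup with the hypotheses there.
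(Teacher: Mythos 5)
The paper does not actually prove Theorem~\ref{thmrsms}: it cites it directly as \cite[Theorem 66]{MaSt08}. Your argument is therefore a genuinely different route in the context of this paper, and it is essentially correct. What you are doing is exactly what the paper does one step later, in the proof of Corollary~\ref{CBIEquiv}: namely, apply Theorem~\ref{BIEquiv} once with $M=\overline N$ and once with $M=L_{y\mu}$, note via~\eqref{SameAnn} that both land in the same Harish-Chandra bimodule category $\cB(I)$, and compose. You specialize to the degenerate case $\wfg=\fg$, for which the paper explicitly licenses the notation (``By slight abuse of notation, we will use the same notation $\cL$ for the corresponding functor applied to the case $\widetilde{\fg}=\fg$''), so $\ind=\mathrm{Id}$ and $\widetilde{\cF}=\cF$, and everything collapses to the non-super setting as you claim. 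Your verification of the hypotheses matches what the paper cites for Corollary~\ref{CBIEquiv}: projectivity is Lemma~\ref{prj}, the shared central character comes from~\eqref{SameAnn}, and positivity of Kostant's problem for $L_{y\mu}$ and $\overline N$ is \cite[Theorem~60(iii)]{MaSt08} and \cite[Proposition~65(iii)]{MaSt08} respectively --- you give the more diffuse pointer to \cite[Section~11.6]{MaSt08}, which is in the right place but less precise. The upshot of your approach is a de-duplication: instead of quoting \cite[Theorem~66]{MaSt08} and then reproducing the same double-application-of-BIEquiv argument for the super version, one sees both Theorem~\ref{thmrsms} and Corollary~\ref{CBIEquiv} as instances of the same composition-through-$\cB(I)$ mechanism, with the classical case simply being the $\wfg=\fg$ specialization. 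This is a clean and legitimate argument; the only thing I would tighten is the Kostant's-problem citations, as noted above.
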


By \cite[Lemma 67]{MaSt08}, categories $\mathrm{Coker}(\mathcal F \otimes \overline N)$ and 
$\mathrm{Coker}(\mathcal F \otimes  L_{y \mu})$ are both admissible in the sense of \cite[Section~6.3]{MaSt08}. 

The following lemma combines \cite[Lemma~63]{MaSt08} and \cite[Proposition~65(ii)]{MaSt08}.

\begin{lemma}\label{prj}
The objects  $L_{y \mu}$ and $\overline N$ are projective in  
$\langle \cF\otimes L_{y \mu} \rangle$ and  
$\langle \cF\otimes \overline N\rangle$, respectively. 
\end{lemma}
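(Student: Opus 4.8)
The plan is to deduce the two projectivity statements from \cite[Lemma~63]{MaSt08} and \cite[Proposition~65(ii)]{MaSt08}, respectively, after verifying that those arguments survive in the present generality. The two new features here are that $\fg$ is only assumed reductive and that we work with supermodules; neither is an obstacle. Every central element of $\fg$ acts by a scalar on any object of $\cO$, so the blocks of $\cO$ are unaffected by enlarging $\fg$ from semisimple to reductive; and passing to supermodules merely replaces $\cO$ by $\cO\oplus\Pi\cO$, so every $\Hom$- and $\Ext$-group splits according to the two parities. It therefore suffices to prove the assertions for $\fg$ semisimple and ordinary modules.

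Both $L_{y\mu}$ and $\overline{N}$ lie in a block of $\cO$ (for $\overline{N}$ because it is the simple quotient of $FL$, and the entire construction of \cite[Section~11]{MaSt08} recalled above takes place inside $\cO$); hence $\langle\cF\otimes L_{y\mu}\rangle$ and $\langle\cF\otimes\overline{N}\rangle$ are Serre subcategories of length categories. By induction on the length of the second argument, an object $X$ of such a category is projective in it precisely when $\Ext^1_{\fg}(X,A)=0$ for every simple $A$ in the category; moreover $\Ext^1_{\fg}(X,A)=\Ext^1_{\fg}(X,\pr_{\chi}A)$, where $\chi$ is the central character of $X$, so it is enough to control $\pr_{\chi}A$ for $A$ a simple subquotient of some $E\otimes X$ with $E\in\cF$.

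For $X=L_{y\mu}$ the input is the genericity of $\mu$, which was arranged (as recalled before the statement) by first tensoring with a suitable finite-dimensional module, an operation that does not disturb the annihilator conditions. Genericity guarantees that for every $E\in\cF$ the weight $\mu$ lies on no wall through any $\mu+\xi$ with $0\neq\xi\in\supp(E)$, so that $\pr_{\eta(\mu)}(E\otimes L_{y\mu})$ is semisimple, a direct sum of copies of $L_{y\mu}$. Hence $\pr_{\eta(\mu)}A$, being a subquotient of this semisimple module, is again a direct sum of copies of $L_{y\mu}$ for every $A\in\langle\cF\otimes L_{y\mu}\rangle$; since $\Ext^1_{\fg}(L_{y\mu},L_{y\mu})=0$ in $\cO$, we obtain $\Ext^1_{\fg}(L_{y\mu},A)=0$ and the projectivity of $L_{y\mu}$, reproducing \cite[Lemma~63]{MaSt08}. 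For $X=\overline{N}$ the central character is regular, so its block of $\cO$ is not semisimple and this shortcut fails. Here one exploits the special choice of the projective functor $F$ from \cite[Proposition~61]{MaSt08}: it is built so that $\overline{N}=\topp(FL)$ has annihilator $I=\Ann_U(L_{y\mu})$ (equation~\eqref{SameAnn}), so that translation onto the $\mu$-wall maps $\langle\cF\otimes\overline{N}\rangle$ into $\langle\cF\otimes L_{y\mu}\rangle$, carries $\overline{N}$ to $L_{y\mu}$ up to a direct summand, and is biadjoint, up to the multiplicity $|W_{\mu}|$, to translation off the wall. Feeding the projectivity of $L_{y\mu}$ into the adjunction isomorphism $\Ext^1_{\fg}(\overline{N},M)\cong\Ext^1_{\fg}(L_{y\mu},T M)$, with $T$ translation onto the wall, then kills $\Ext^1_{\fg}(\overline{N},-)$ on $\langle\cF\otimes\overline{N}\rangle$, which is \cite[Proposition~65(ii)]{MaSt08}.

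The main obstacle is precisely the bookkeeping in this last step: one must pin down which projective (translation) functors connect the regular block containing $\overline{N}$ to the $\mu$-singular block containing $L_{y\mu}$, and check that they carry $\langle\cF\otimes\overline{N}\rangle$ into $\langle\cF\otimes L_{y\mu}\rangle$ and that $\overline{N}$ really is a summand of the translate of $L_{y\mu}$. This is exactly where the detailed setup of \cite[Section~11]{MaSt08} — the choice of $\mathfrak p$, of $y$ with $ys<y$ for all simple $s\in W_{\mu}$, and the genericity of $\mu$ — is used; once it is in place, the reductive and super generalisations add nothing by the first paragraph.
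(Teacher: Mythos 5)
Your proof rests on a false premise: you assert that $\overline{N}$ lies in a block of category $\cO$ ``because it is the simple quotient of $FL$, and the entire construction of \cite[Section~11]{MaSt08} recalled above takes place inside $\cO$.'' This is not so. In the setup of Section~\ref{SeCok}, $L$ (or $V$) is an \emph{arbitrary} simple $\fg$-(super)module obtained from a simple $\widetilde{\fg}$-supermodule via Duflo's theorem; it is not assumed to be a weight module and is not in $\cO$ in general. The projective functor $F$ of \cite[Proposition~61]{MaSt08} is applied to this $L$, so $\overline{N}=\topp(FL)$ is also, in general, not an object of $\cO$. What the construction arranges is only the equality of primitive ideals $\Ann_U(\overline{N})=\Ann_U(L_{y\mu})=:I$ of equation~\eqref{SameAnn}; this is an equality of two-sided ideals and says nothing about $\overline{N}$ being a weight module. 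Hence the entire second half of your argument -- controlling $\Ext^1$ inside blocks of $\cO$, translation onto and off the $\mu$-wall with the biadjunction of Lemma~\ref{PropBG}\eqref{LemWall} -- simply does not apply to $\overline{N}$.

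Two further, related misreadings. First, you describe the relationship between $\overline{N}$ and $L_{y\mu}$ as ``translation onto the $\mu$-wall,'' but $\overline{N}$ already has the singular central character $\eta(\mu)$ determined by $I$; there is no wall-crossing involved, and $\overline{N}$ and $L_{y\mu}$ are simply two different simple modules sharing the same annihilator. Second, you invoke a functor $T$ ``translation onto the wall'' carrying $\langle\cF\otimes\overline N\rangle$ into $\langle\cF\otimes L_{y\mu}\rangle$ and $\overline N$ to $L_{y\mu}$. In the paper, the comparison between these two coker-categories is Theorem~\ref{thmrsms} (i.e.\ \cite[Theorem~66]{MaSt08}), whose equivalence $\Xi=\cL(\overline N,-)\otimes_U L_{y\mu}$ is a Harish-Chandra bimodule tensor functor, not a translation functor; and its proof \emph{uses} the projectivity of $\overline{N}$ in $\langle\cF\otimes\overline N\rangle$ as an input (via the ``admissibility'' of these coker-categories), so it cannot be used to establish that projectivity without circularity. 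The paper itself does not reprove Lemma~\ref{prj}: it simply records that the statement is the combination of \cite[Lemma~63]{MaSt08} (for $L_{y\mu}$) and \cite[Proposition~65(ii)]{MaSt08} (for $\overline N$), with the reductive and super features handled as noted at the start of Section~\ref{SeCok}. A correct reconstruction of the $\overline{N}$ case would have to follow the bimodule-theoretic proof in \cite{MaSt08}, which operates in the bimodule category $\cB(I)$ rather than in $\cO$.
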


Now we can formulate the following equivalence of coker-categories.

\begin{corollary}  \label{CBIEquiv}
The functor
$$\widetilde{\Xi}: = \mathcal L(\overline N, -)\otimes_{U} L_{y \mu}: \mathrm{Coker}(\widetilde{\cF} \otimes \mathrm{Ind} (\overline N)) \rightarrow \mathrm{Coker}(\widetilde{\cF} \otimes \mathrm{Ind} (L_{y \mu}))$$ 
is an equivalence sending $\mathrm{Ind}(\overline N)$ to $\mathrm{Ind}( L_{y \mu})$.
\end{corollary}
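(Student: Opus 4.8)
The plan is to deduce Corollary~\ref{CBIEquiv} from Theorem~\ref{thmrsms} by an induction/restriction argument, exploiting the fact that induction from $\fg$ to $\wfg$ is exact and commutes appropriately with tensoring by finite-dimensional modules. The key observation is that $\mathrm{Ind}$ and the functors $-\otimes E$ (for $E\in\cF$, $\widetilde{E}\in\widetilde\cF$) interact nicely: for a $\fg$-supermodule $M$ and $\widetilde{E}\in\widetilde\cF$ one has a natural isomorphism $\widetilde{E}\otimes\mathrm{Ind}(M)\cong\mathrm{Ind}(\res(\widetilde{E})\otimes M)$ (the standard tensor identity for induction along a finite ring extension), and conversely $\mathrm{Ind}(E\otimes M)\cong\mathrm{Ind}(M)\otimes$ (something in $\add(\widetilde\cF\otimes-)$ after applying $\mathrm{Ind}$ to $E$, using that $\mathrm{Ind}(E)$ for $E\in\cF$ lies in $\widetilde\cF$ up to the parity/top-degree twist of \cite{BF}). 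Consequently $\mathrm{Ind}$ maps $\add(\cF\otimes M)$ into $\add(\widetilde\cF\otimes\mathrm{Ind}(M))$, and hence sends the coker-category $\mathrm{Coker}(\cF\otimes M)$ into $\mathrm{Coker}(\widetilde\cF\otimes\mathrm{Ind}(M))$, since $\mathrm{Ind}$ is exact and therefore preserves presentations $X\to Y\to N\to 0$.

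First I would record these two compatibilities precisely and check that $\mathrm{Ind}$ restricted to the relevant coker-categories is an equivalence onto its image; this is essentially the content already used implicitly in Corollary~\ref{CBIEquiv}'s formulation, and it follows from Theorem~\ref{BIEquiv} applied with the roles of $M$ being $\overline N$ and $L_{y\mu}$ respectively. Indeed, Theorem~\ref{BIEquiv} needs two hypotheses: that Kostant's problem for $M$ has a positive solution, and that $M$ is projective in $\langle\cF\otimes M\rangle$. The projectivity for both $\overline N$ and $L_{y\mu}$ is exactly Lemma~\ref{prj}. For Kostant's problem: $L_{y\mu}$ has a positive answer because $L_{y\mu}$ is the translation to a wall of $L_{y0}$ with $ys<y$ for all $s$ fixing $\mu$ — this is precisely the genericity/regularity setup imported from \cite{MaSt08} and \cite{MaMe12}, where positivity of Kostant's problem in this situation is established; and $\overline N$ has the same annihilator $I$ and shares the positive solution by the same references (this is why $\mu$ was taken generic). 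So Theorem~\ref{BIEquiv} gives equivalences $-\otimes_U\overline N:\cB(I)\xrightarrow{\sim}\mathrm{Coker}(\widetilde\cF\otimes\mathrm{Ind}(\overline N))$ and $-\otimes_U L_{y\mu}:\cB(I)\xrightarrow{\sim}\mathrm{Coker}(\widetilde\cF\otimes\mathrm{Ind}(L_{y\mu}))$ (note the annihilators agree by \eqref{SameAnn}, so both have the same source $\cB(I)$).

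Now I would assemble the square: Theorem~\ref{thmrsms} gives $\Xi=\cL(\overline N,-)\otimes_U L_{y\mu}:\mathrm{Coker}(\cF\otimes\overline N)\xrightarrow{\sim}\mathrm{Coker}(\cF\otimes L_{y\mu})$, and on the bimodule side $\cL(\overline N,-):\mathrm{Coker}(\cF\otimes\overline N)\to\cB(I)$ together with $-\otimes_U L_{y\mu}:\cB(I)\to\mathrm{Coker}(\cF\otimes L_{y\mu})$ are the inverse equivalences from Theorem~\ref{BIEquiv} in the non-super case. The point is that the \emph{same} bimodule functor $\cL(\overline N,-)$ and tensoring $-\otimes_U L_{y\mu}$ are used to build $\widetilde\Xi$, only now the source and target are the induced coker-categories; so $\widetilde\Xi$ is the composite of the equivalence $\cL(\overline N,-):\mathrm{Coker}(\widetilde\cF\otimes\mathrm{Ind}(\overline N))\xrightarrow{\sim}\cB(I)$ (inverse of $-\otimes_U\overline N$ from Theorem~\ref{BIEquiv}) followed by $-\otimes_U L_{y\mu}:\cB(I)\xrightarrow{\sim}\mathrm{Coker}(\widetilde\cF\otimes\mathrm{Ind}(L_{y\mu}))$ (from Theorem~\ref{BIEquiv}). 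Being a composite of two equivalences, $\widetilde\Xi$ is an equivalence. For the final claim that $\widetilde\Xi(\mathrm{Ind}(\overline N))\cong\mathrm{Ind}(L_{y\mu})$: under $\cL(\overline N,-)$ the object $\mathrm{Ind}(\overline N)$ corresponds to $U/I$ (the image of $\cL(\overline N,\overline N)$ under the Kostant isomorphism, then induced — equivalently $\cL(\overline N,\mathrm{Ind}(\overline N))\cong U/\Ann_U(\overline N)=U/I$, which is the statement that under the equivalence of Theorem~\ref{BIEquiv} the distinguished object $\mathrm{Ind}(M)$ corresponds to $U/I\in\cB(I)$), and then $U/I\otimes_U L_{y\mu}\cong L_{y\mu}$, whose induced image under the equivalence $-\otimes_U L_{y\mu}$ of Theorem~\ref{BIEquiv} is $\mathrm{Ind}(L_{y\mu})$.

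\textbf{The main obstacle} I anticipate is bookkeeping the genericity and Kostant's-problem hypotheses so that Theorem~\ref{BIEquiv} genuinely applies to \emph{both} $\overline N$ and $L_{y\mu}$ with the \emph{same} ideal $I$; this is exactly why the elaborate reductions preceding \eqref{SameAnn} (choosing $\sigma$ in a right cell of a parabolic, the condition $ys<y$, tensoring to make $\mu$ generic) were carried out, and I would need to cite \cite[Proposition~65]{MaSt08}, \cite[Subsection~5.3]{MaMe12} and \eqref{SameAnn} to certify that $I=\Ann_U(\overline N)=\Ann_U(L_{y\mu})$ and that Kostant's problem is positive for both. A secondary, more routine point is verifying that $\widetilde\Xi$ as literally written (first $\cL(\overline N,-)$, then $\otimes_U L_{y\mu}$) does land in $\mathrm{Coker}(\widetilde\cF\otimes\mathrm{Ind}(L_{y\mu}))$ rather than some a priori larger category — but this is immediate from Theorem~\ref{BIEquiv} since that theorem identifies the target of $-\otimes_U L_{y\mu}$ on $\cB(I)$ as precisely $\mathrm{Coker}(\widetilde\cF\otimes\mathrm{Ind}(L_{y\mu}))$.
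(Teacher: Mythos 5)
Your core argument is the same as the paper's: factor $\widetilde{\Xi}$ through $\cB(I)$ as $\cL(\overline N,-)$ followed by $-\otimes_U L_{y\mu}$, apply Theorem~\ref{BIEquiv} twice (once with $M=\overline N$, once with $M=L_{y\mu}$) using Lemma~\ref{prj}, the positive solution of Kostant's problem for both modules, and the equality of annihilators \eqref{SameAnn}, and conclude that $\widetilde\Xi$ is a composite of two equivalences. Two remarks. First, your opening paragraph --- the ``induction/restriction argument'' framing and the discussion of $\mathrm{Ind}(\cF\otimes M)\subset\add(\widetilde\cF\otimes\mathrm{Ind}(M))$ --- is not what the proof actually uses; the paper does not deduce Corollary~\ref{CBIEquiv} from Theorem~\ref{thmrsms}, it derives both independently from Theorem~\ref{BIEquiv}, and the tensor-identity compatibilities you mention only enter later (in the proof of Lemma~\ref{cor7.5-1}, equation \eqref{Xicorr}). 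Second, and more substantively, your final computation is garbled: you assert $\cL(\overline N,\mathrm{Ind}(\overline N))\cong U/I$, but $U/I$ is not even an object of $\cB(I)$ (which lives in $\widetilde U$\mbox{-smod-}$U$); the correct statement, using positivity of Kostant's problem for $\overline N$, is $\cL(\overline N,\mathrm{Ind}(\overline N))\cong\widetilde U\otimes_U\cL(\overline N,\overline N)\cong\widetilde U\otimes_U U/I$, after which $\widetilde U\otimes_U (U/I)\otimes_U L_{y\mu}\cong\widetilde U\otimes_U L_{y\mu}=\mathrm{Ind}(L_{y\mu})$ in one step, since $I$ annihilates $L_{y\mu}$. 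Your phrase ``whose induced image under the equivalence $-\otimes_U L_{y\mu}$'' is an attempt to patch this, but it amounts to applying $-\otimes_U L_{y\mu}$ a second time; the ``$\mathrm{Ind}$'' should already have appeared in the identification of $\cL(\overline N,\mathrm{Ind}(\overline N))$.
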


\begin{proof} 
With $I$ from \ref{SameAnn}, we claim that 
\begin{equation}
\mathrm{Coker}(\widetilde{\cF} \otimes \mathrm{Ind} (\overline N)) 
\xrightarrow{\mathcal L(\overline N, -)} \mathcal B(I) \xrightarrow{-\otimes_{U} L_{y \mu}} \mathrm{Coker}(\widetilde{\cF} \otimes \mathrm{Ind}( L_{y \mu}))
\end{equation} 
are equivalences. Indeed, recall that Kostant's problem has a positive solution for $L_{y \mu}$ and $\overline N$ 
by \cite[Theorem~60(iii)]{MaSt08} and \cite[Proposition 65(iii)]{MaSt08}, respectively. Now 
Lemma~\ref{prj} and Theorem~\ref{BIEquiv} imply the claim. 
	
Finally, since Kostant's problem has a positive solution for $\overline N$, it follows that
\begin{align*}
&\mathcal L(\overline N, \mathrm{Ind} (\overline N))\otimes_{U} L_{y \mu} 
\cong \widetilde{U}\otimes_{U}\mathcal L(\overline N, \overline N)
\otimes_{U} L_{y \mu} \cong \mathrm{Ind}( L_{y \mu}), 
\end{align*}
as $\widetilde{U}$-modules. This completes the proof.
\end{proof}

\subsection{Rough structure of induced $\widetilde{\fg}$-supermodules}\label{sec7.5} 

In this subsection, we obtain a description of the rough structure 
of induced supermodules. 



\begin{lemma}\label{cor7.5-1}
The equivalence $\widetilde\Xi$ in Corollary \ref{CBIEquiv} gives rise to a 
bijection between the sets of isomorphism classes of objects in 
the categories $\add(\widetilde{\cF}\otimes \mathrm{Ind}(\overline N))$ 
and $\add(\widetilde{\cF}\otimes \mathrm{Ind} (L_{y \mu}))$. These two categories are, 
respectively, the categories of projective objects in 
$\mathrm{Coker}(\widetilde{\cF}\otimes \mathrm{Ind}(\overline N))$ and
$\mathrm{Coker}(\widetilde{\cF}\otimes \mathrm{Ind}(L_{y \mu}))$. 
\end{lemma}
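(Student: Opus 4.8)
The plan is to establish the two assertions of the lemma in turn. First I would identify, for each of the modules $\overline N$ and $L_{y\mu}$, the full subcategory $\add(\widetilde{\cF}\otimes\mathrm{Ind}(\overline N))$ (resp.\ $\add(\widetilde{\cF}\otimes\mathrm{Ind}(L_{y\mu}))$) with the full subcategory of projective objects of the coker-category $\mathrm{Coker}(\widetilde{\cF}\otimes\mathrm{Ind}(\overline N))$ (resp.\ $\mathrm{Coker}(\widetilde{\cF}\otimes\mathrm{Ind}(L_{y\mu}))$); then I would deduce the asserted bijection from the general fact that an equivalence of abelian categories preserves projective objects, applied to $\widetilde\Xi$ from Corollary~\ref{CBIEquiv}.

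For the first step I would argue as follows, writing the argument for $\overline N$ since the case of $L_{y\mu}$ is word-for-word the same. By Lemma~\ref{prj}, $\overline N$ is projective in $\langle\cF\otimes\overline N\rangle$. Since $\mathrm{Ind}$ is left adjoint to the exact functor $\mathrm{Res}$, and since the PBW theorem gives $\mathrm{Res}\,\mathrm{Ind}(\overline N)\cong\overline N\otimes\Lambda\widetilde{\fg}_{\bar 1}$ as $\fg$-modules, the functor $\mathrm{Res}$ carries $\langle\widetilde{\cF}\otimes\mathrm{Ind}(\overline N)\rangle$ into $\langle\cF\otimes\overline N\rangle$; a routine adjunction argument then shows that $\mathrm{Ind}(\overline N)$ is projective in $\langle\widetilde{\cF}\otimes\mathrm{Ind}(\overline N)\rangle$. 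Tensoring with $E\in\widetilde{\cF}$ preserves this projectivity, since $-\otimes E$ is left adjoint to the exact functor $-\otimes E^{\ast}$, and so does passing to direct summands; hence every object of $\add(\widetilde{\cF}\otimes\mathrm{Ind}(\overline N))$ is projective in $\langle\widetilde{\cF}\otimes\mathrm{Ind}(\overline N)\rangle$, and a fortiori in the full abelian subcategory $\mathrm{Coker}(\widetilde{\cF}\otimes\mathrm{Ind}(\overline N))$, which is abelian by admissibility (recorded via \cite[Lemma~67]{MaSt08}). Conversely, by the very definition of the coker-category every object of $\mathrm{Coker}(\widetilde{\cF}\otimes\mathrm{Ind}(\overline N))$ is a quotient of an object of $\add(\widetilde{\cF}\otimes\mathrm{Ind}(\overline N))$; if that object is projective, this quotient map splits, exhibiting it as a direct summand of, hence an object of, $\add(\widetilde{\cF}\otimes\mathrm{Ind}(\overline N))$. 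Alternatively, this first step is exactly what the admissible-category formalism of \cite[Section~6.3]{MaSt08} provides once admissibility is known, so it could be invoked essentially verbatim.

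The second step is then purely formal. By Corollary~\ref{CBIEquiv} the functor $\widetilde\Xi$ is an equivalence of abelian categories; together with its quasi-inverse it therefore preserves projective objects, and hence restricts to an equivalence between the full subcategories of projectives, which by the first step are precisely $\add(\widetilde{\cF}\otimes\mathrm{Ind}(\overline N))$ and $\add(\widetilde{\cF}\otimes\mathrm{Ind}(L_{y\mu}))$. Since any equivalence of categories induces a bijection on isomorphism classes of objects, this yields the claimed bijection; moreover, matching the distinguished projective generators, $\widetilde\Xi$ sends $\mathrm{Ind}(\overline N)$ to $\mathrm{Ind}(L_{y\mu})$, as already recorded in Corollary~\ref{CBIEquiv}.

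The only genuinely delicate point I anticipate is the transfer of projectivity along $\mathrm{Ind}$ in the first step, i.e.\ checking that $\mathrm{Res}$ really lands in $\langle\cF\otimes\overline N\rangle$ and that the $\langle-\rangle$- and $\mathrm{Coker}$-categories behave as abelian categories with enough projectives of the stated shape. This, however, is precisely what Lemma~\ref{prj} and the admissibility of the two coker-categories (\cite[Lemma~67]{MaSt08}, \cite[Section~6.3]{MaSt08}) furnish; beyond these inputs the argument is routine.
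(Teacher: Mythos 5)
Your proof is correct, but it takes a genuinely different route from the paper's, and it omits a piece of information that the paper's proof generates and then uses downstream.

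The paper's argument is computational rather than categorical. It directly evaluates $\widetilde\Xi$ on the generators of the additive category, establishing the chain of isomorphisms
$\widetilde{\Xi}(E \otimes \mathrm{Ind}( \overline N))
\cong \mathcal L(\overline N, \mathrm{Ind}(\mathrm{Res} (E)\otimes \overline N))\otimes_{U} L_{y \mu}
\cong \widetilde{U} \otimes_{U} (\mathrm{Res} (E) \otimes U/I) \otimes_{U} L_{y \mu}
\cong E\otimes \mathrm{Ind}( L_{y \mu})$,
using Knapp's projection formula on both ends, the Jantzen/Kostant-problem isomorphism $\mathcal L(\overline N,\overline N)\cong U/I$ in the middle, and $\Ann_U(L_{y\mu})=I$. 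Since $\widetilde\Xi$ is an equivalence (hence closed under direct summands), this explicit identification on generators already yields the bijection between the two $\add$-categories. For the projectivity assertion the paper then simply cites \cite[Proof of Theorem~3.1]{CC}, which shows that $\add(\widetilde{\cF}\otimes \widetilde{U}/\widetilde{U}I)$ is precisely the subcategory of projectives in $\mathcal B(I)$, and transfers this across the equivalences of Theorem~\ref{BIEquiv}.

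Your route inverts the logical order: you first establish the projectivity characterization directly, deriving projectivity of $\mathrm{Ind}(\overline N)$ from Lemma~\ref{prj} via the $(\mathrm{Ind},\mathrm{Res})$-adjunction and the PBW-based observation that $\mathrm{Res}$ carries $\langle\widetilde{\cF}\otimes\mathrm{Ind}(\overline N)\rangle$ into $\langle\cF\otimes\overline N\rangle$, and then you invoke the purely formal fact that an equivalence of abelian categories preserves projectives. This is cleaner and more self-contained than citing [CC], and your adjunction argument for projectivity is sound (the delicate point you flag — that projectivity in $\langle-\rangle$ descends to the full abelian subcategory $\mathrm{Coker}$, and that $\mathrm{Coker}$ is abelian with the induced exactness — is indeed exactly what the admissibility input from \cite[Lemma~67, Section~6.3]{MaSt08} provides). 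The trade-off is that your proof does not produce the explicit isomorphism $\widetilde{\Xi}(E\otimes\mathrm{Ind}(\overline N))\cong E\otimes\mathrm{Ind}(L_{y\mu})$, which is labelled \eqref{Xicorr} in the paper and is cited in the proof of the rough-structure result Theorem~\ref{thmrough}. So while your argument fully justifies the statement of the lemma, a reader following your proof would have to supplement it with the projection-formula computation before being able to prove the theorem that comes next.
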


\begin{proof} 
For a given $E\in\widetilde{\cF}$,  we have 
\begin{equation}\label{Xicorr}
\begin{array}{rcl}
\widetilde{\Xi}(E \otimes \mathrm{Ind}( \overline N)) 
&\cong &
\mathcal L(\overline N,  \mathrm{Ind}(\mathrm{Res} (E)\otimes \overline N))\otimes_{U} L_{y \mu}\\ 
&\cong & \widetilde{U} \otimes_{U} (\mathrm{Res} (E) \otimes U \otimes_{U}
U/I) \otimes_{U} L_{y \mu}\\ 
&\cong &\mathrm{Ind}(\mathrm{Res} (E)\otimes L_{y \mu})\\ 
&\cong &E\otimes \mathrm{Ind}( L_{y \mu}).
\end{array}
\end{equation} 
Here the first and the last isomorphisms use \cite[Proposition 6.5]{Knapp},
which states that
$$E\otimes\mathrm{Ind}(X)\cong \mathrm{Ind}(\mathrm{Res}(E)\otimes X),$$
for any $\fg$-module $X$,
the second isomorphism uses \cite[Section~6.8]{Jantzen} and the isomorphism
$\mathcal L(\overline N, \overline N)\cong U/I$
and the third isomorphism uses \eqref{SameAnn}.

That $\add(\widetilde{\cF}\otimes \widetilde{U}/\widetilde{U}I)$ 
is the full subcategory of projective modules in $\mathcal B(I)$ is shown in \cite[Proof of Theorem~3.1]{CC}. The claim follows. 
\end{proof}

As a consequence, see \cite[Section~11]{MaSt08}, $\widetilde{\Xi}$ induces a bijection
\begin{displaymath} 
{\mathbf{\widetilde\Xi}}:~\mathrm{Irr}^{\wfg}( \mathrm{Coker}(\widetilde{\cF} \otimes \mathrm{Ind}(\overline N)))\rightarrow
\mathrm{Irr}^{\wfg}(\mathrm{Coker}(\widetilde{\cF} \otimes \mathrm{Ind} (L_{y \mu}))),
\end{displaymath}
between the sets of isomorphism classes of simple $\wfg$-supermodule quotients
of simple objects in $ \mathrm{Coker}(\widetilde{\cF} \otimes \mathrm{Ind}( \overline N))$ and
$\mathrm{Coker}(\widetilde{\cF} \otimes \mathrm{Ind} (L_{y \mu}))$. 

For  $S \in \mathrm{Irr}^{\wfg}( \mathrm{Coker}(\widetilde{\cF} \otimes \mathrm{Ind}( \overline N)))$, 
we define
\begin{displaymath} 
\widetilde{L}_{S}:= \mathbf{\widetilde\Xi}(S)\,\in\widetilde\cO. 
\end{displaymath} 

We are now in a position to state the first main result of this section which describes 
rough structure of induced modules in terms of category $\widetilde{\mathcal{O}}$ combinatorics.

\begin{theorem}[Rough structure of induced modules] \label{thmrough}
For any module $E\in\widetilde{\cF}$ and any module
$S \in \emph{Irr}^{\wfg}( \mathrm{Coker}(\widetilde{\cF} \otimes \mathrm{Ind}( \overline N)))$, 
we have 
\begin{equation}\label{ERStr}
[\mathrm{Ind}(\mathrm{Res} (E)\otimes \overline N):S] =[\mathrm{Ind}(\mathrm{Res} (E)\otimes L_{y \mu}):\widetilde{L}_{S}].
\end{equation}
\end{theorem}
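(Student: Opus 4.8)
The plan is to deduce~\eqref{ERStr} directly from the equivalence $\widetilde{\Xi}$ of Corollary~\ref{CBIEquiv}, using the explicit computation of $\widetilde{\Xi}$ on induced modules already contained in Lemma~\ref{cor7.5-1}, together with the fact that both coker-categories in play are admissible, so that the relevant multiplicities are invariants of the category and are therefore transported along the equivalence. First I would assemble the objects on the two sides. By \cite[Proposition~6.5]{Knapp} we have $\mathrm{Ind}(\mathrm{Res}(E)\otimes\overline N)\cong E\otimes\mathrm{Ind}(\overline N)$ and $\mathrm{Ind}(\mathrm{Res}(E)\otimes L_{y\mu})\cong E\otimes\mathrm{Ind}(L_{y\mu})$; these lie in $\add(\widetilde{\cF}\otimes\mathrm{Ind}(\overline N))$ and $\add(\widetilde{\cF}\otimes\mathrm{Ind}(L_{y\mu}))$, which by Lemma~\ref{cor7.5-1} are the categories of projective objects of $\mathrm{Coker}(\widetilde{\cF}\otimes\mathrm{Ind}(\overline N))$ and $\mathrm{Coker}(\widetilde{\cF}\otimes\mathrm{Ind}(L_{y\mu}))$ respectively, and the chain of isomorphisms~\eqref{Xicorr} gives $\widetilde{\Xi}(\mathrm{Ind}(\mathrm{Res}(E)\otimes\overline N))\cong\mathrm{Ind}(\mathrm{Res}(E)\otimes L_{y\mu})$.

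Next I would record the behaviour of $\widetilde{\Xi}$ on simple objects. Being an equivalence, $\widetilde{\Xi}$ preserves projective objects, simple objects and their simple supermodule tops; by construction this last matching is precisely the bijection $\mathbf{\widetilde\Xi}$, so that whenever $S$ is the simple top of a simple object $T$ of $\mathrm{Coker}(\widetilde{\cF}\otimes\mathrm{Ind}(\overline N))$, the module $\widetilde{L}_{S}=\mathbf{\widetilde\Xi}(S)$ is the simple top of $\widetilde{\Xi}(T)$. It then remains to invoke the admissibility formalism: in an admissible category the multiplicity $[M:S]$ of the simple supermodule top $S$ of a simple object, inside an object $M$, is finite and is a categorical invariant — it is read off from the $\Hom$-spaces between $M$ and the indecomposable projectives, equivalently from the image of $M$ under the defining equivalence of the category with $\mathrm{Coker}$ of the projectives of a finite-dimensional algebra — and hence is preserved by equivalences of admissible categories. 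For the super coker-categories here this is the analogue of \cite[Lemma~67]{MaSt08}, available in the super setting via \cite{CC}, together with the general framework of \cite[Sections~6.3 and~11]{MaSt08}. Granting this, we obtain
$$[\mathrm{Ind}(\mathrm{Res}(E)\otimes\overline N):S]=[\widetilde{\Xi}(\mathrm{Ind}(\mathrm{Res}(E)\otimes\overline N)):\mathbf{\widetilde\Xi}(S)]=[\mathrm{Ind}(\mathrm{Res}(E)\otimes L_{y\mu}):\widetilde{L}_{S}],$$
which is~\eqref{ERStr}.

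The main obstacle is exactly this last ingredient. One must check that the coker-categories $\mathrm{Coker}(\widetilde{\cF}\otimes\mathrm{Ind}(\overline N))$ and $\mathrm{Coker}(\widetilde{\cF}\otimes\mathrm{Ind}(L_{y\mu}))$ are admissible, and that for the distinguished simple supermodules arising as tops of simple objects the multiplicity inside an arbitrary — possibly infinite length — object of the category is well-defined, finite, and intrinsic to the category, so that the otherwise purely formal transport along $\widetilde{\Xi}$ is legitimate; here one also uses that $\mathrm{Res}(E)\otimes L_{y\mu}$ lies in $\cO$ and hence $\mathrm{Ind}(\mathrm{Res}(E)\otimes L_{y\mu})$ has finite length, so that the right-hand side of~\eqref{ERStr} is a bona fide composition multiplicity. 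Once the admissibility input is in place, the remainder of the argument is the bookkeeping with Lemma~\ref{cor7.5-1} and~\eqref{Xicorr} indicated above.
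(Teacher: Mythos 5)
Your proposal is correct and follows essentially the same route as the paper: identify $\mathrm{Ind}(\mathrm{Res}(E)\otimes\overline N)$ with $E\otimes\mathrm{Ind}(\overline N)$ and transport it under $\widetilde\Xi$ via \eqref{Xicorr}, then read off the multiplicities as $\dim\Hom$ against the indecomposable projectives $\widetilde{Q}_S$ and $\widetilde{Q}_{L}$ in the respective coker-categories, which the equivalence matches. The paper states this compactly (citing \cite[Theorem~72]{MaSt08} for the admissibility bookkeeping that you spell out more explicitly), but the chain of equalities is the same.
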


\begin{proof} The proof of equation \eqref{ERStr} follows from a similar argument  using $\Xi$ as given in \cite[Theorem~72]{MaSt08}. To see this, 
let $\widetilde{Q}_S$ and $\widetilde{Q}_{L}$ denote the indecomposable  projective objects in the categories
$\mathrm{Coker}(\widetilde{\cF}\otimes \mathrm{Ind} (\overline N))$ and 
$\mathrm{Coker}(\widetilde{\cF}\otimes \mathrm{Ind}( L_{y \mu}))$ with tops  
$S$ and $\widetilde{L}_{S}$, respectively. By \eqref{Xicorr} we have
\begin{displaymath}
\begin{array}{rcl}
[\mathrm{Ind}(\mathrm{Res} (E)\otimes \overline N): S]
&=& \dim\mathrm{Hom}_U(\widetilde{Q}_S, \mathrm{Ind}(\mathrm{Res} (E)\otimes \overline N))\\
&=& \dim\mathrm{Hom}_U(\widetilde{Q}_{L}, \mathrm{Ind}(\mathrm{Res} (E)\otimes  L_{y \mu}))\\ 
&=& [\mathrm{Ind}(\mathrm{Res} (E)\otimes  L_{y \mu}): \widetilde{L}_{S}].
\end{array}
\end{displaymath}
The claim follows.
\end{proof}

\subsection{Rough structure of simple $\widetilde{\fg}$-supermodules}\label{sec7.6}  

In this subsection, we give a description of the rough structure 
of restrictions to $\fg$ of simple $\wfg$-supermodules in 
$\mathrm{Irr}^{\wfg}(\mathrm{Coker}(\widetilde{\cF} \otimes \mathrm{Ind}( \overline N)))$.  

\begin{lemma} \label{nlem25}
There is an isomorphism $\emph{Res}\circ \widetilde{\Xi} \cong \Xi \circ \emph{Res}$ of functors.
\end{lemma}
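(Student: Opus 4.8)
The plan is to unwind both functors $\widetilde\Xi$ and $\Xi$ into their bimodule descriptions and check the isomorphism on the level of the underlying even modules. Recall that $\Xi = \mathcal L(\overline N,-)\otimes_U L_{y\mu}$ is a functor between coker-categories of $\fg$-supermodules, while $\widetilde\Xi = \mathcal L(\overline N,-)\otimes_U L_{y\mu}$ is the analogous functor between coker-categories of $\wfg$-supermodules, where now $\mathcal L(\overline N,-)$ produces a $\widetilde U$-smod-$U$ bimodule. So for $M\in\mathrm{Coker}(\widetilde\cF\otimes\mathrm{Ind}(\overline N))$ I would compute
$$\mathrm{Res}\,\widetilde\Xi(M)\;=\;\mathrm{Res}\big(\mathcal L(\overline N,M)\otimes_U L_{y\mu}\big),$$
and compare it with
$$\Xi\,\mathrm{Res}(M)\;=\;\mathcal L(\overline N,\mathrm{Res}(M))\otimes_U L_{y\mu}.$$
Since the tensor-by-$L_{y\mu}$ step only touches the right $U$-action, the whole point reduces to producing a natural isomorphism of $\fg$-$U$ bimodules
$$\mathrm{Res}\big(\mathcal L(\overline N,M)\big)\;\cong\;\mathcal L(\overline N,\mathrm{Res}(M)),$$
functorial in $M$. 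Here on the left, $\mathcal L(\overline N,M)$ is the maximal sub-bimodule of $\Hom_\mC(\overline N,M)$ lying in $\cB$ (the $\wfg$-$\fg$ Harish-Chandra category from Section~\ref{SeHCB}), and then $\mathrm{Res}$ forgets the odd part of the left action; on the right, $\mathcal L(\overline N,\mathrm{Res}(M))$ is the maximal sub-bimodule of $\Hom_\mC(\overline N,\mathrm{Res}(M))=\Hom_\mC(\overline N,M)$ lying in the $\fg$-$\fg$ Harish-Chandra category.

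The key observation making this work is that $\cB$, by definition, consists of those $\wfg$-$\fg$ bimodules $N$ for which $N^{\mathrm{ad}}$ — the restriction to the \emph{even} adjoint $\fg$-action — is a direct sum of modules in $\cF$ with finite multiplicities. In other words, membership in $\cB$ is detected entirely by the even adjoint action. Therefore, for a subspace $Z\subseteq \Hom_\mC(\overline N,M)$ that is stable under both the left $\fg$-action and the right $U$-action, the condition ``$Z$ lies in the $\fg$-$\fg$ Harish-Chandra category'' is literally the same condition as ``$Z$, viewed with its left $\fg$-action, has the local finiteness / $\cF$-decomposability property.'' The only subtlety is the compatibility with the odd left action: I would check that the maximal $\fg$-$\fg$-Harish-Chandra sub-bimodule of $\Hom_\mC(\overline N,M)$ is automatically stable under the action of $\wfg_{\bar1}$. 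This follows because $\wfg_{\bar1}$ is a finite-dimensional $\fg$-submodule of $\wfg$ (under the even adjoint action), so acting by it sends an $\cF$-locally-finite vector to a finite sum of $\cF$-locally-finite vectors — the Harish-Chandra condition on the even adjoint action is preserved under multiplication by the finite-dimensional $\fg$-module $\wfg_{\bar1}$, exactly as in the standard argument that $\cF\otimes(-)$ preserves Harish-Chandra bimodules. Hence the maximal even-Harish-Chandra sub-bimodule of $\Hom_\mC(\overline N,M)$ is in fact a $\wfg$-$\fg$ sub-bimodule, it lies in $\cB$ (its even adjoint restriction being the maximal $\cF$-isotypic part), and it is the largest such; so it coincides with $\mathrm{Res}\,\mathcal L(\overline N,M)$ on the one hand and equals $\mathcal L(\overline N,\mathrm{Res}(M))$ on the other. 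Naturality in $M$ is then clear, as is the compatibility with the common $\otimes_U L_{y\mu}$ tail.

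I expect the main obstacle to be bookkeeping rather than a genuine conceptual difficulty: one must be careful that ``$\mathcal L$'' on the two sides of the claimed isomorphism refers to two different (but deliberately homonymous, per the ``slight abuse of notation'' remark in Section~\ref{SeHCB}) functors, and one must verify that the even-adjoint-action characterization of $\cB$ really does make the two maximal sub-bimodules coincide as vector spaces with the correct residual $\fg$- and $U$-module structures. A secondary point to nail down is that $\mathrm{Res}$ of a bimodule in $\cB$ — meaning the underlying $\fg$-$U$ bimodule obtained by forgetting the odd part of the left $\wfg$-action — lands in the $\fg$-$\fg$ Harish-Chandra category, which is immediate from the definition since the even adjoint action is unchanged. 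Once these identifications are in place, the isomorphism $\mathrm{Res}\circ\widetilde\Xi\cong\Xi\circ\mathrm{Res}$ drops out by applying $-\otimes_U L_{y\mu}$ to the bimodule isomorphism above, and functoriality of all constructions gives the naturality of the resulting isomorphism of functors.
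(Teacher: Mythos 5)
Your proposal is correct and amounts to a careful unpacking of the same observation the paper invokes with the one-line justification ``this follows directly from the definitions.'' The two essential points you isolate — that $\mathrm{Res}$ commutes with $-\otimes_U L_{y\mu}$ because the latter only involves the right $U$-action, and that the maximal sub-bimodule of $\Hom_\mC(\overline N,M)$ in the $\fg$-$\fg$ Harish-Chandra category is automatically $\wfg_{\bar1}$-stable (since $\wfg_{\bar1}$ is a finite-dimensional $\fg$-module under the even adjoint action, so multiplying by it preserves the Harish-Chandra condition), and hence coincides with $\mathrm{Res}\,\mathcal L(\overline N,M)$ — are exactly what make the identification hold; the paper simply leaves these verifications to the reader.
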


\begin{proof}
This follows directly from the definitions. 
\end{proof}

We have a bijection 
$$\mathbf{\Xi} : \mathrm{Irr}^{\fg}(\mathrm{Coker}(\mathcal F \otimes \overline N))\rightarrow \mathrm{Irr}^{\fg}(\mathrm{Coker}(\mathcal F \otimes L_{y \mu})),$$
induced by  $\Xi$,  between the sets of isomorphism classes  of 
the simple $\fg$-quotients of simple objects in $\mathrm{Coker}(\mathcal F \otimes \overline N)$ and in $\mathrm{Coker}(\mathcal F \otimes L_{y \mu})$. 
For a given
\begin{displaymath}
W \in \mathrm{Irr}^{\fg}(\mathrm{Coker}(\mathcal F \otimes \overline N)),
\end{displaymath}
we define  
the related weight $\zeta_W\in \mathfrak h^*$ by 
$L_{\zeta_W} \cong \mathbf\Xi(W)$.
The next statement describes the $\fg$-rough structure of simple 
$\wfg$-supermodules in terms of the combinatorics of category $\mathcal{O}$.

\begin{theorem}[Rough structure of simple $\wfg$-supermodules]
For any $\wfg$-supermodule  
$S\in \mathrm{Irr}^{\wfg}(\mathrm{Coker}(\widetilde{\cF} \otimes \mathrm{Ind} \overline N))$ and
any $\fg$-supermodule
$W\in \mathrm{Irr}^{\fg}(\mathrm{Coker}(\mathcal F \otimes \overline N))$, 
we have
\begin{align}\label{Eq::RoughStr-s}
[\mathrm{Res}(S):W] = [\mathrm{Res}(\widetilde{L}_{S}):L_{\zeta_W}].
\end{align}
\end{theorem}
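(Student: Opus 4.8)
The plan is to reduce the claimed equation \eqref{Eq::RoughStr-s} to Theorem~\ref{thmrough} (rough structure of induced modules) by writing both $S$ and $W$ as quotients of induced modules and then comparing multiplicities. First I would recall that, since $S$ lies in $\mathrm{Irr}^{\wfg}(\mathrm{Coker}(\widetilde{\cF}\otimes\mathrm{Ind}(\overline N)))$, there is a module $E\in\widetilde{\cF}$ and an epimorphism $\mathrm{Ind}(\mathrm{Res}(E)\otimes\overline N)\cong E\otimes\mathrm{Ind}(\overline N)\tto S$; similarly, $W\in\mathrm{Irr}^{\fg}(\mathrm{Coker}(\cF\otimes\overline N))$ is a quotient of $\mathrm{Res}(E')\otimes\overline N$ for some $E'\in\widetilde{\cF}$, and without loss of generality (by the indecomposable projective cover $\widetilde{Q}_S$ in the admissible category $\mathrm{Coker}(\widetilde{\cF}\otimes\mathrm{Ind}(\overline N))$, which itself is a summand of some $E\otimes\mathrm{Ind}(\overline N)$) we can take the projective cover. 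The key observation is Lemma~\ref{nlem25}: $\mathrm{Res}\circ\widetilde{\Xi}\cong\Xi\circ\mathrm{Res}$, so the restriction functor intertwines the two equivalences on the super side and the even side, and in particular $\mathrm{Res}(\widetilde{L}_S)$ and $L_{\zeta_W}$ are matched up by $\mathbf{\Xi}$ in the same way $\mathrm{Res}(S)$ and $W$ are.

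The main step is then a computation of $[\mathrm{Res}(S):W]$ via Hom-spaces from the indecomposable projective cover, exactly as in the proof of Theorem~\ref{thmrough}. Concretely, let $Q_W$ denote the indecomposable projective object of the admissible category $\mathrm{Coker}(\cF\otimes\overline N)$ with top $W$ (this exists by admissibility, \cite[Lemma~67]{MaSt08} applied on the even side). Then $[\mathrm{Res}(S):W]=\dim\mathrm{Hom}_U(Q_W,\mathrm{Res}(S))$; here one uses that $\mathrm{Res}(S)$ lies in the admissible category $\mathrm{Coker}(\cF\otimes\overline N)$ — which follows because $\mathrm{Res}(\mathrm{Ind}(\overline N))\cong \mathrm{Res}(\Lambda\wfg_{\bar 1})\otimes\overline N$ lies in $\add(\cF\otimes\overline N)$, so $\mathrm{Res}$ sends $\mathrm{Coker}(\widetilde{\cF}\otimes\mathrm{Ind}(\overline N))$ into $\mathrm{Coker}(\cF\otimes\overline N)$ — and that in an admissible category multiplicities of simple tops of subquotients are computed by Hom from projective covers. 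Applying the equivalence $\Xi$ of Theorem~\ref{thmrsms} gives $\dim\mathrm{Hom}_U(Q_W,\mathrm{Res}(S))=\dim\mathrm{Hom}_U(\Xi(Q_W),\Xi(\mathrm{Res}(S)))$, and by Lemma~\ref{nlem25} we have $\Xi(\mathrm{Res}(S))\cong\mathrm{Res}(\widetilde{\Xi}(S))=\mathrm{Res}(\widetilde{L}_S)$, while $\Xi(Q_W)$ is the indecomposable projective with top $\mathbf{\Xi}(W)=L_{\zeta_W}$. Hence $\dim\mathrm{Hom}_U(\Xi(Q_W),\mathrm{Res}(\widetilde{L}_S))=[\mathrm{Res}(\widetilde{L}_S):L_{\zeta_W}]$, which is the right-hand side of \eqref{Eq::RoughStr-s}.

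The step I expect to require the most care is justifying that the relevant multiplicities genuinely coincide with $\dim\mathrm{Hom}$ from the indecomposable projective covers in these coker-categories — that is, verifying the admissibility machinery of \cite[Section~6.3]{MaSt08} applies on both the super side and the even side, and that $\mathrm{Res}$ behaves compatibly (sends coker-categories to coker-categories and projectives to sums of projectives). Most of this is already packaged: \cite[Lemma~67]{MaSt08} gives admissibility of the coker-categories built from $\overline N$ and $L_{y\mu}$, Lemma~\ref{cor7.5-1} identifies the projectives as $\add(\widetilde{\cF}\otimes\mathrm{Ind}(\overline N))$, and Theorem~\ref{thmrough} already carries out this Hom-space bookkeeping for induced modules. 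So the honest work is checking that $\mathrm{Res}$ of a projective object in $\mathrm{Coker}(\widetilde{\cF}\otimes\mathrm{Ind}(\overline N))$ is a projective object in $\mathrm{Coker}(\cF\otimes\overline N)$ — which, using $E\otimes\mathrm{Ind}(\overline N)\mapsto \mathrm{Res}(E)\otimes\mathrm{Res}(\Lambda\wfg_{\bar 1})\otimes\overline N$, reduces to the analogous statement for even projective functors — and then stringing together Lemma~\ref{nlem25} with Theorem~\ref{thmrsms}. Once those compatibilities are in place, the identity \eqref{Eq::RoughStr-s} is immediate.
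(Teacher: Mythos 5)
Your proof is correct and follows essentially the same route as the paper's (admittedly terse) argument: the paper simply says to apply $\mathbf{\Xi}$ to the left-hand side, invoke Lemma~\ref{nlem25}, and refer to \cite[Theorem~73]{MaSt08}; your write-up faithfully unfolds what that means, namely computing $[\mathrm{Res}(S):W]$ as $\dim\Hom_U(Q_W,\mathrm{Res}(S))$, transporting this through the equivalence $\Xi$ of Theorem~\ref{thmrsms}, and using the intertwining $\mathrm{Res}\circ\widetilde{\Xi}\cong\Xi\circ\mathrm{Res}$ to identify the result as $[\mathrm{Res}(\widetilde{L}_S):L_{\zeta_W}]$. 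You also correctly flag the point that $\mathrm{Res}$ carries $\mathrm{Coker}(\widetilde{\cF}\otimes\mathrm{Ind}(\overline N))$ into $\mathrm{Coker}(\cF\otimes\overline N)$ via $\mathrm{Res}\circ\mathrm{Ind}\cong \Lambda\widetilde{\fg}_{\ob}\otimes-$, which is what makes the even-side admissibility machinery applicable.
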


\begin{proof}
Equality~\eqref{Eq::RoughStr-s} is obtained by applying $\mathbf{\Xi}$ to the 
left hand side and then using Lemma~\ref{nlem25}, cf. \cite[Theorem 73]{MaSt08}.
\end{proof}

\subsection{Classification of simple $\widetilde{\fg}$-supermodules} \label{sec7.4}

Theorem~\ref{thmrsms} reduces the problem of classification of all simple $\fg$-modules
to the following problem. Let $\mathcal{I}$ denote the set of all primitive ideals in $U$
which appear as annihilators of modules over the form $\overline N$, for all possible weight lattices.
For each fixed simple $\fg$-module $S$ with annihilator $I\in \mathcal{I}$
(which we view as some $\overline N$), we have the corresponding bijection
$$
\mathbf\Xi(S) : 
\mathrm{Irr}^{\fg}(\mathrm{Coker}(\mathcal F \otimes S))\rightarrow \mathrm{Irr}^{\fg}(\mathrm{Coker}(\mathcal F \otimes L_{y \mu})).
$$
For $I\in \mathcal{I}$, let $\mathrm{Irr}^{\fg}(I)$ denote the set of 
isomorphism classes of simple $\fg$-modules whose annihilator is $I$.
Consequently, we have:

\begin{corollary}\label{thmclass0}
The set 
\begin{displaymath}
\coprod_{I\in \mathcal{I }}\,\,\,\coprod_{S\in \mathrm{Irr}^{\fg}(I)} 
\mathbf\Xi(S)^{-1}
\big(\mathrm{Irr}^{\fg}(\mathrm{Coker}(\mathcal F \otimes L_{y \mu}))\big)
\end{displaymath}
coincides with  the set of isomorphism classes of simple  $\fg$-modules.
\end{corollary}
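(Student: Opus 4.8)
The plan is to read the statement off from Theorem~\ref{thmrsms} together with Duflo's theorem (Theorem~\ref{Dufthm}) and the type~$A$ cell combinatorics of Section~\ref{TypeA}, in the spirit of \cite[Section~11]{MaSt08}; the restriction to regular integral central characters made throughout Section~\ref{sec7} is removed, as in Section~\ref{SeCok}, by translation functors and the equivalences of \cite{ChMaWa13}. Concretely, two things have to be shown: that the displayed union (i) exhausts all isomorphism classes of simple $\fg$-modules, and that it (ii) does so without repetition, so that it is a genuine coproduct.

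For (i), take a simple $\fg$-module $L$. By Theorem~\ref{Dufthm} its annihilator is a primitive ideal, of the form $\Ann_{U}(L_{\sigma\nu})$ for a dominant $\nu$ and some $\sigma\in W$. Using Fact~1 of Section~\ref{TypeA} we may arrange $\sigma$ to lie in the right cell of the longest element of a parabolic subgroup; as in \cite[Remark~14]{MaSt08} and in the construction of Section~\ref{SeCok}, this produces a parabolic $\fp$, a generic dominant $\mu$, an element $y\in W$, the unique projective simple $L_{y\mu}\in\mathcal O^{\fp}_{\mu}$, and a generic simple module $\overline N$ with $I:=\Ann_{U}(\overline N)=\Ann_{U}(L_{y\mu})$, so that $I\in\mathcal I$. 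One then checks that $L$ occurs as a simple quotient of a simple object of $\mathrm{Coker}(\cF\otimes\overline N)$: after translating to a common central character, $L$ has the same Gelfand--Kirillov dimension as $\overline N$ and its annihilator lies in the same two-sided cell, so $L$ belongs to $\langle\cF\otimes\overline N\rangle$ by the theory of projective functors, and admissibility of the coker-category (\cite[Lemma~67]{MaSt08}) exhibits $L$ as such a quotient. Since $\overline N$ is projective in $\langle\cF\otimes\overline N\rangle$ and Kostant's problem for it is positive, it may serve as the module ``$S$'' in the indexing, and then applying $\mathbf\Xi(\overline N)$ sends $[L]$ into $\mathrm{Irr}^{\fg}(\mathrm{Coker}(\cF\otimes L_{y\mu}))$, so that $[L]$ is returned by $\mathbf\Xi(\overline N)^{-1}$, i.e. it lies in the summand indexed by $(I,\overline N)$.

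For (ii), the idea is to exhibit the assignment $[L]\mapsto\big(I,S,\mathbf\Xi(S)[L]\big)$ as a two-sided inverse to the parametrization. If $[L]$ lies in the summand indexed by $(I,S)$, then $L$ is a simple quotient of a simple object of $\mathrm{Coker}(\cF\otimes S)$, hence (via $\langle\cF\otimes S\rangle$ and the theory of projective functors) $\Ann_{U}(L)$ and $\Ann_{U}(S)=I$ lie in the same two-sided cell; this cell determines $\fp$ and the generic weight $\mu$, hence $I$, hence the generic module $\overline N$ relevant to that cell, which is the required choice of $S$; and $\mathbf\Xi(S)[L]$ is then determined by $L$. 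Running this together with (i) shows the union is disjoint and equals the full set of isomorphism classes of simple $\fg$-modules.

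The delicate point is (i): showing that a completely arbitrary simple $\fg$-module, lacking any of the finiteness available in category~$\mathcal O$, is realized as a simple quotient of a simple object of $\mathrm{Coker}(\cF\otimes\overline N)$ for the generic module $\overline N$ attached to its annihilator. This rests on the assertion that $\overline N$ generates, via $\cF\otimes(-)$ and passage to subquotients, every simple module with the same associated variety and a compatible primitive ideal --- the statement distilled from \cite[Section~11]{MaSt08} --- together with the positivity of Kostant's problem for $\overline N$ and its projectivity in $\langle\cF\otimes\overline N\rangle$. All of this is underpinned by the type~$A$ facts of Section~\ref{TypeA}, and it is precisely here that the argument would break down for other Dynkin types, cf. Remark~\ref{remprof}.
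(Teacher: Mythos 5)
You have correctly identified the ingredients the paper relies on for this corollary: Duflo's theorem, the type~$A$ cell combinatorics of Section~\ref{TypeA} and \cite[Remark 14]{MaSt08}, the choice of $\fp$, $\mu$, $y$, $L_{y\mu}$ and the companion module $\overline N$ in Section~\ref{SeCok}, the equivalence $\Xi$ of Theorem~\ref{thmrsms}, and the admissibility and Kostant-problem properties of the coker-categories; the paper gives no separate proof and leans entirely on that construction. However, the step where you justify that an arbitrary simple $\fg$-module $L$ lies in $\langle\cF\otimes\overline N\rangle$ --- ``$L$ has the same Gelfand--Kirillov dimension as $\overline N$ and its annihilator lies in the same two-sided cell, so $L$ belongs to $\langle\cF\otimes\overline N\rangle$ by the theory of projective functors'' --- is not sound. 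Membership of a simple module in $\langle\cF\otimes\overline N\rangle$ means being a subquotient of some $E\otimes\overline N$ with $E\in\cF$, which is a \emph{one-sided} cell-order relation between $L$ and $\overline N$; equality of the two-sided cells of their annihilators (even together with equality of Gelfand--Kirillov dimension and central character) does not give this. The argument the paper is actually set up to deliver is tautological: $\overline N$ is \emph{defined} as the simple quotient of $FL$ for the explicit projective functor $F$ of \cite[Proposition~61]{MaSt08}, so biadjunction of projective functors gives a nonzero (hence injective) morphism $L\to F^\vee\overline N$, whence $L\in\langle\cF\otimes\overline N\rangle$ by construction; admissibility of $\mathrm{Coker}(\cF\otimes\overline N)$ then realizes $L$ as a quotient of a simple object there. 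The cell and GK-dimension relations are consequences of this, not a substitute for it.

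Your treatment of disjointness is also off: you say the two-sided cell of $\Ann_U(L)$ ``determines $\fp$ and the generic weight $\mu$, hence $I$, hence the generic module $\overline N$ relevant to that cell, which is the required choice of $S$.'' The cell does determine $I$ (and $L_{y\mu}$), but it does not single out an element of $\mathrm{Irr}^{\fg}(I)$: that index set consists of \emph{all} simple $\fg$-modules with annihilator $I$, and $\overline N$ depends on the starting module $L$, not only on its cell. So the argument as written does not pin down the index $S$. (For what it is worth, the paper itself never argues disjointness; the parallel proof of Theorem~\ref{thmclass1} only verifies that every simple appears on the right-hand side, and this corollary is presented in the same spirit.)
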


This reduces, in some sense, the problem of classification of all $\fg$-modules
to that of classification of all $\fg$-modules with annihilators in $\mathcal{I}$.
For Lie algebras, this might look unimpressive. However, a similar reasoning applied to 
Corollary~\ref{CBIEquiv} gives the following very surprising extension of this to $\widetilde{\fg}$
which, in the same sense, reduces the problem of classification of all $\widetilde{\fg}$-supermodules
to that of classification of all $\fg$-modules with annihilators in $\mathcal{I}$.
Here, again, for each fixed simple $\fg$-module $S$ with annihilator $I\in\mathcal{I}$,
we have the bijection
\begin{displaymath}
\mathbf{\widetilde{\Xi}}(S):\mathrm{Irr}^{\wfg}( \mathrm{Coker}(\widetilde{\mathcal F} \otimes \mathrm{Ind}(S)))
\rightarrow
\mathrm{Irr}^{\wfg}(\mathrm{Coker}(\widetilde{\mathcal F} \otimes \mathrm{Ind} (L_{y \mu}))).
\end{displaymath}

\begin{theorem}[Classification of simple supermodules]\label{thmclass1}
The set 
\begin{equation}\label{abs431}
\coprod_{I\in \mathcal{I }}\,\,\,\coprod_{S\in \mathrm{Irr}^{\fg}(I)} 
\mathbf{\widetilde\Xi}(S)^{-1}
\big(\mathrm{Irr}^{\wfg}(\mathrm{Coker}(\widetilde{\cF} \otimes \mathrm{Ind}(L_{y \mu})))\big)
\end{equation}
coincides with  the set of isomorphism classes of simple $\widetilde{\fg}$-supermodules. 
\end{theorem}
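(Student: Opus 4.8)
The plan is to combine the classification of simple $\widetilde{\fg}$-supermodules as quotients of induced modules with the machinery of coker-categories built up in this section. First I would invoke Proposition~\ref{wow}: since $\fg=\widetilde{\fg}_{\bar 0}$ is reductive of type~$A$, Theorem~\ref{ThmA} guarantees that $S\otimes E$ has finite type socle for every simple $\fg$-module $S$ and every $E\in\cF$, so Proposition~\ref{wow} applies and every simple $\widetilde{\fg}$-supermodule $K$ is a quotient of $\mathrm{Ind}(L)$ for some simple $\fg$-module $L$. After tensoring with a finite-dimensional supermodule and translating onto/out of walls via the equivalences of \cite{ChMaWa13} (as in \cite[Remark~76]{MaSt08}), we may reduce to the case where $L$ lies in a block with regular integral central character, and by Theorem~\ref{Dufthm} together with the reduction of $\sigma$ to a right cell as in \cite[Remark~14]{MaSt08}, we may take $L=\overline N$ for one of the distinguished modules attached to a primitive ideal $I\in\mathcal I$. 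Thus every simple $\widetilde{\fg}$-supermodule occurs as a simple quotient of some $\mathrm{Ind}(\overline N)$, i.e. as an element of $\mathrm{Irr}^{\wfg}(\mathrm{Coker}(\widetilde{\cF}\otimes\mathrm{Ind}(\overline N)))$ for an appropriate choice of $\overline N$.

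Next I would show that the union in \eqref{abs431} is a disjoint union which exhausts all simple supermodules. The bijection $\mathbf{\widetilde\Xi}(S)$ is exactly the bijection on simple quotients induced by the equivalence $\widetilde\Xi$ of Corollary~\ref{CBIEquiv} (with $\overline N$ replaced by $S$, which is legitimate since $S$ and $\overline N$ share the same annihilator $I$). Applying $\mathbf{\widetilde\Xi}(S)^{-1}$ to $\mathrm{Irr}^{\wfg}(\mathrm{Coker}(\widetilde{\cF}\otimes\mathrm{Ind}(L_{y\mu})))$ therefore simply reparametrizes the set of simple quotients of objects of $\mathrm{Coker}(\widetilde{\cF}\otimes\mathrm{Ind}(S))$; running over all $S\in\mathrm{Irr}^{\fg}(I)$ and all $I\in\mathcal I$ then sweeps out every simple $\widetilde{\fg}$-supermodule by the previous paragraph. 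For surjectivity this is essentially Proposition~\ref{wow} plus the reduction steps; for injectivity (disjointness), I would argue that the annihilator $I=\mathrm{Ann}_U(\overline N)$ of the even part can be recovered from the simple supermodule via Theorem~\ref{Dufthm}, and that the second index $S\in\mathrm{Irr}^{\fg}(I)$ is recovered as the simple $\fg$-quotient of the restriction that carries the correct annihilator — so no simple supermodule can appear for two different pairs $(I,S)$.

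The main obstacle, and the step requiring the most care, will be the reduction to the regular integral case. The excerpt only proves the coker-category equivalences (Theorem~\ref{thmrsms}, Corollary~\ref{CBIEquiv}) under the standing assumption of a regular integral central character, and the passage to arbitrary central characters is merely asserted to follow ``by standard techniques.'' Making this precise requires checking that translation functors onto and out of the walls intertwine $\mathrm{Ind}$ appropriately (which is compatible with \cite[Proposition~6.5]{Knapp}, used already in \eqref{Xicorr}), that they preserve the coker-category formalism and the property of being a simple quotient, and that the singular-block equivalences from \cite{ChMaWa13} are compatible with the $\widetilde{\fg}$-action. A further subtlety is the bookkeeping of parities: as emphasized in Section~\ref{sec6.0}, a simple supermodule need not be isomorphic to its parity shift, so one must verify that the bijections $\mathbf{\widetilde\Xi}(S)$ and the identification of simple quotients respect the $\mathbb Z_2$-grading, exactly as in the second paragraph of the proof of Proposition~\ref{wow}. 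Once these compatibilities are in place, the statement follows by assembling Proposition~\ref{wow}, Theorem~\ref{Dufthm}, and Corollary~\ref{CBIEquiv}.
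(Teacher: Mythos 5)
Your proposal is correct and follows essentially the same route as the paper: combine Theorem~\ref{ThmA} with Proposition~\ref{wow} to realise every simple $\wfg$-supermodule as a quotient of some $\mathrm{Ind}(L)$, then feed this into the coker-category equivalence of Corollary~\ref{CBIEquiv}. The paper's own proof is a two-line sketch that leaves both the reduction to regular integral central characters and the disjointness of the parametrisation implicit (the former is handled once and for all in Section~\ref{SeCok} by appeal to ``standard techniques'' and the equivalences of \cite{ChMaWa13}), whereas you spell these points out explicitly, which is a reasonable and faithful elaboration rather than a different argument.
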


\begin{proof}
As $\fg$ is of type~$A$, we can combine Theorem~\ref{ThmA} with Proposition~\ref{wow}
to conclude, using Corollary~\ref{CBIEquiv}, that every simple $\widetilde{\fg}$-supermodule 
appears in the right hand side of \eqref{abs431}. The claim follows.  
\end{proof}

Although Theorem~\ref{thmclass1} is not as nice as \cite[Theorem~4.1]{CM} (the latter result reduces classification of
simple supermodules over basic classical Lie superalgebras of type I to classification of simple modules
over the corresponding even part Lie algebra), it is fairly clear that \cite[Theorem~4.1]{CM} does not extend to,
for example, Q-type Lie superalgebras given in Subsection~\ref{SectEx} in any easy way.
At the same time, the set 
\begin{displaymath}
 \mathrm{Irr}^{\wfg}(\mathrm{Coker}(\widetilde{\mathcal F} \otimes \mathrm{Ind}(L_{y\mu})))
\end{displaymath}
for  Q-type Lie superalgebras can be described using Subsection~\ref{SectEx}.
Therefore for  Q-type Lie superalgebras, Theorem~\ref{thmclass1} provides significant
progress in classification of simple supermodules.

\subsection{Applications}\label{SectEx} 
We give a short overview of Lie superalgebras to which our results are applicable and cannot be dealt with using the theory of \cite{CM}. We focus on $D(2,1;\alpha), F(4), Q(n)$ and generalized Takiff superalgebras.

First we quickly review the classification of simple modules in $\widetilde{\cO}$.
Let $\widetilde{\fg}$ be a classical Lie superalgebra and choose a Cartan subalgebra~$\fa$ of the derived Lie algebra $\fg'=[\fg,\fg]$ of the underlying Lie algebra. Consider a Weyl group invariant bilinear form $\langle\cdot,\cdot\rangle$ on $\fa^\ast$. Following~\cite[\S 2.4]{Ma}, we choose a $\omega\in \mR\Phi$, where $\Phi$ is the set of roots of $\fg'$, which is generic in the sense that $\langle\omega,\alpha\rangle\not=0$ for all $\alpha\in\Phi$. Then we define a triangular decomposition
$$\widetilde{\fg}\;=\;\widetilde{\fn}^-\oplus\widetilde{\fh}\oplus\widetilde{\fn}^+,$$
where $\widetilde{\fh}$ is the direct sum of $\fa$-weight spaces corresponding to weights $\lambda$ satisfying $\langle\lambda,\omega\rangle=0$ and $\widetilde{\fn}^{\pm}$ corresponds to weights $\lambda$ with $\pm \langle\lambda,\omega\rangle>0$. Clearly $\fh:=\widetilde{\fh}_{\oa}$ and $\fn^{\pm}:=\widetilde{\fn}^{\pm}_{\oa}$ give a triangular decomposition of $\fg$.

Assume now that $[\widetilde{\fh}_{\ob},\widetilde{\fh}_{\ob}]=0$. Then simple $\widetilde{\fh}$-modules are the one dimensional $\fh$-modules with trivial $\widetilde{\fh}_{\ob}$-action $\mC_\lambda$, with $\lambda\in\fh^\ast$. We have the corresponding Verma module
$$\widetilde{\Delta}_\lambda:=U(\widetilde{\fg})\otimes_{U(\widetilde\fb)}\mC_\lambda,$$
with $\widetilde{\fb}=\widetilde{\fh}\oplus\widetilde{\fn}^+$ acting on $\mC_\lambda$ with trivial $\widetilde{\fn}^+$-action. It follows as in the classical case that $\widetilde{\Delta}_\lambda$ has a unique maximal submodule. We denote the quotient by $\widetilde{L}_\lambda$. The following lemma is well-known, see e.g. \cite[Proposition~2]{Ma}.
\begin{lemma}\label{LemSimpO}
The set  $\{\widetilde{L}_\la, \Pi \widetilde{L}_\la|~\lambda \in \mathfrak h^*\}$
	is a complete and irredundant set of representatives of isomorphism classes of simple objects in $\mathcal{O}$.
\end{lemma}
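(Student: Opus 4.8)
The plan is to run the classical highest-weight argument for category $\cO$, the only additional input being the description of the simple $\widetilde{\fh}$-supermodules recalled just above; this is the point at which the hypothesis $[\widetilde{\fh}_{\ob},\widetilde{\fh}_{\ob}]=0$ is used.

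First I would check that each $\widetilde{L}_\la$ is a simple object of $\widetilde{\cO}$. By construction it is the simple quotient of $\widetilde{\Delta}_\la=U(\widetilde{\fg})\otimes_{U(\widetilde{\fb})}\mC_\la\cong U(\widetilde{\fn}^-)\otimes\mC_\la$, which is an $\fh$-weight module with finite-dimensional weight spaces whose $\langle\omega,-\rangle$-values are bounded above; hence $\widetilde{\Delta}_\la$ lies in $\widetilde{\cO}$, and so does its quotient $\widetilde{L}_\la$. Since $\widetilde{\fn}^-$ and $\widetilde{\fh}_{\ob}$ strictly decrease, resp. preserve, the value of $\langle\omega,-\rangle$, the $\la$-weight space of $\widetilde{\Delta}_\la$ is spanned by its canonical generator alone; the same is therefore true in $\widetilde{L}_\la$, so there the $\la$-weight space is one-dimensional, concentrated in a single $\mZ_2$-degree, and $\la$ is a maximal weight.

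The main step is completeness: every simple $S\in\widetilde{\cO}$ is isomorphic to $\widetilde{L}_\la$ or $\Pi\widetilde{L}_\la$ for some $\la$. Since $\mathrm{Res}(S)$ lies in $\cO$, the supermodule $S$ has finite-dimensional $\fh$-weight spaces and $\langle\omega,-\rangle$ is bounded above on its support; let $h_0$ be the maximum attained. The span $S^+$ of the weight vectors of $\langle\omega,-\rangle$-value $h_0$ is nonzero, is annihilated by $\widetilde{\fn}^+$ (which strictly raises $\langle\omega,-\rangle$), and is a $\widetilde{\fh}$-submodule of $S$ (as $\widetilde{\fh}$ preserves $\langle\omega,-\rangle$). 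Any nonzero vector of $S^+$ generates a finite-dimensional $\widetilde{\fh}$-supermodule, which has a simple $\widetilde{\fh}$-submodule, and by the description recalled above this yields a nonzero $\mZ_2$-homogeneous $v\in S$ with $\widetilde{\fn}^+v=0$, $\widetilde{\fh}_{\ob}v=0$ and $\fh$ acting on $v$ through some $\la\in\fh^\ast$. Then $v$ is a $\widetilde{\fb}$-eigenvector of character $\la$, so it is the image of the canonical generator under a homomorphism $\widetilde{\Delta}_\la\to S$ (or $\Pi\widetilde{\Delta}_\la\to S$, when $v$ is odd); as $S$ is simple and $v\neq 0$, this map is onto, whence $S\cong\widetilde{L}_\la$ or $S\cong\Pi\widetilde{L}_\la$.

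Finally, for irredundancy I would use that $\la$ is recovered from $\widetilde{L}_\la$ as its unique maximal weight, whose weight space is one-dimensional and lies in the even component of $\widetilde{L}_\la$ (hence in the odd component of $\Pi\widetilde{L}_\la$). A grading-preserving isomorphism between two members of the list must match these one-dimensional highest weight spaces, which forces the two weights to coincide and forbids an isomorphism between an $\widetilde{L}_\la$ and a $\Pi\widetilde{L}_\mu$. This establishes irredundancy and finishes the proof. I do not expect any genuine obstacle: once the simple $\widetilde{\fh}$-supermodules are understood, each step is the verbatim super-analogue of the classical category $\cO$ argument.
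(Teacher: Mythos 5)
The paper does not supply its own proof: it records the statement as well known and simply cites \cite[Proposition~2]{Ma}. Your argument is the standard highest-weight argument for category $\cO$, transported to the $\omega$-graded triangular decomposition, and it is correct; it is exactly the approach one expects the cited reference to take. The two small points you leave implicit are routine: that $\langle\omega,-\rangle$ attains its maximum $h_0$ on the support of a simple $S\in\widetilde{\cO}$, and that the $\widetilde{\fh}$-module $S^+$ is finite dimensional, both follow because the support of an object of $\cO$ lies in finitely many cones below dominant weights while $\langle\alpha,\omega\rangle>0$ for every simple root $\alpha$, so only finitely many weights in the support can have $\omega$-value $\ge h_0$.
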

The one parameter family of simple Lie superalgebras $D(2,1|\alpha)$, see \cite[Chapter 4]{Musson}, satisfies the above assumptions and has underlying Lie algebra of type $A$.
 The multiplicities in ${\rm Res}(\widetilde{L}_\lambda)$ for $D(2,1|\alpha)$ have been determined in \cite{ChWa02}.
Therefore our results provide a concrete approach to the rough structure problem. The simple Lie superalgebra $F(4)$, see \cite[Chapter 4]{Musson}, has underlying Lie algebra $\mathfrak{so}_7\oplus\mathfrak{sl}_2$. Based on Remark~\ref{RemBC} we can thus expect to apply some of our results to this case. Note that there are also many examples of algebra $\widetilde{\fg}$ as above which are far from simple, see for instance the {\em generalized Takiff superalgebras} introduced in \cite[Section 3.3]{GM}.

For Lie superalgebras of type $Q$, see \cite{Musson, ChWa}, the statement in Lemma~\ref{LemSimpO} has to be adapted, but an explicit classification of simple modules is known, see e.g., 
\cite[Section 1.5.4]{ChWa}. The category $\widetilde\cO$ for type $Q$ Lie superalgebras has been intensively studied in e.g. \cite{Br,Fr,FM,BD}.

\section{Kac functor preserves finite type socle and finite type radical for Lie superalgebras of type I} \label{sec8}

In this section we prove the following observation which naturally connects to various previous parts of the paper. To state the most general result we work in the category $\widetilde{\fg}$-sMod of all supermodules.

\begin{theorem}\label{thm91}
Let $\widetilde{\fg}=\widetilde{\fg}_{-1}\oplus\widetilde{\fg}_0\oplus \widetilde{\fg}_1$ be a 
Lie superalgebra of type I with $\fg:=\widetilde{\fg}_0$ and $\fp:=\widetilde{\fg}_0\oplus \widetilde{\fg}_1$. Then the corresponding 
Kac functor 
\begin{displaymath}
\mathrm{K}:=\mathrm{Ind}^{\widetilde{\fg}}_{\fp}:
{\fg}\text{-}\mathrm{sMod}\to\widetilde{\fg}\text{-}\mathrm{sMod} 
\end{displaymath}
(where $\widetilde{\fg}_1$ acts trivially on $\fg$-modules)
sends modules with finite type socle (resp radical) to modules with 
finite type socle (resp. radical)
preserving the length of the socle (resp. radical).
\end{theorem}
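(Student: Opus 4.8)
The plan is to exploit the explicit structure of the Kac functor $\mathrm{K}=\mathrm{Ind}^{\widetilde{\fg}}_{\fp}$ as a functor which, on the level of restriction to $\fg$, is simply $-\otimes\Lambda\widetilde{\fg}_{-1}$, a finite-dimensional tensor factor. More precisely, for a type I superalgebra the PBW theorem gives $\mathrm{Res}^{\widetilde{\fg}}_{\fg}\circ\mathrm{K}(M)\cong M\otimes\Lambda\widetilde{\fg}_{-1}$ as $\fg$-supermodules, and this identification is the key bookkeeping device. The first step is to record that $\mathrm{K}$ is exact (it is an induction along a free extension) and that it has an exact right adjoint, the coinduction functor $\mathrm{Coind}^{\widetilde{\fg}}_{\fp}$; by the type I analogue of \cite[Theorem~2.2]{BF} the two differ only by a one-dimensional twist, so $\mathrm{K}$ is in fact biadjoint to the exact functor $\mathrm{Res}^{\widetilde{\fg}}_{\fp}$ up to twist. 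Consequently $\mathrm{K}$ sends simple $\fg$-supermodules to supermodules of finite length over $\fg$ (indeed, $\mathrm{Res}(\mathrm{K}(L))\cong L\otimes\Lambda\widetilde{\fg}_{-1}$ has finite length as many copies of the finite-dimensional $\Lambda\widetilde{\fg}_{-1}$ are involved only through a filtration whose subquotients are $L$-isotypic, but more simply: $\mathrm{K}(L)$ has a simple top as the image of $\mathrm{K}$ applied to a projection, and a simple socle by adjunction). This reduces the problem to transferring essentiality of the socle and superfluousness of the radical across $\mathrm{K}$.

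The second step handles the socle statement. Suppose $M$ has finite type socle. I would first show $\soc(\mathrm{K}(M))$ has finite length: since $\mathrm{Res}$ is exact and faithful, $\soc(\mathrm{K}(M))$ embeds into $\mathrm{Res}(\mathrm{K}(M))=M\otimes\Lambda\widetilde{\fg}_{-1}$, whose socle has finite length by the established fact (Remark~\ref{RemSoc}\eqref{RemSoc.3} applied after noting $M\otimes\Lambda\widetilde{\fg}_{-1}$ is noetherian when $M$ is, or directly using that $\soc(M)$ being finite controls $\soc(M\otimes E)$ — this is exactly the content of the first half of the paper for tensoring with finite-dimensional modules, specialized here to $\fg$ being arbitrary but $E=\Lambda\widetilde{\fg}_{-1}$, where we only need finite-length socle which is automatic). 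For essentiality: given a nonzero $\widetilde{\fg}$-subsupermodule $N\subseteq\mathrm{K}(M)$, I want to produce a simple subsupermodule inside $N$. By adjunction a nonzero inclusion $N\hookrightarrow\mathrm{K}(M)=\mathrm{Coind}(M)\otimes(\text{twist})$ yields, after restricting, a nonzero $\fg$-map, hence a nonzero $\fg$-submodule of $N$ which meets $\soc(M\otimes\Lambda\widetilde{\fg}_{-1})$ (essentiality downstairs), so $N$ has nonzero $\fg$-socle; then the socle of $N$ as a $\widetilde{\fg}$-module is nonzero because any simple $\fg$-submodule generates a finite-length (since $\widetilde{U}$ is finite over $U$) $\widetilde{\fg}$-submodule, which therefore has a simple $\widetilde{\fg}$-sub. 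This gives essentiality. The length-preservation claim for the socle follows from the adjunction isomorphism $\Hom_{\widetilde{\fg}}(L',\mathrm{K}(M))\cong\Hom_{\fg}(\mathrm{Res}(L'),M)\otimes(\text{twist})$ for simple $\widetilde{\fg}$-supermodules $L'$, together with the fact that $\mathrm{Res}(L')$ has simple socle matched to the socle constituents of $M$ — here I would lean on the dual/parity bookkeeping and Proposition~\ref{wow}'s proof technique, which shows every simple $\widetilde{U}$-module embeds in a coinduced module.

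The third step is the radical statement, which is essentially dual. If $M$ has finite type radical then $M$ is finitely generated, hence $\mathrm{K}(M)$ is finitely generated over $\widetilde{U}$ (as $\widetilde{U}$ is finite over $U$), so by Remark~\ref{RemRad}\eqref{RemRad.2} it suffices to show $\mathrm{K}(M)/\rad(\mathrm{K}(M))$ has finite length. One shows $\topp(\mathrm{K}(M))$ is a quotient of $\mathrm{K}(\topp(M))=\mathrm{K}(M/\rad(M))$ because $\mathrm{K}(\rad(M))$ maps into $\rad(\mathrm{K}(M))$ — this uses exactness of $\mathrm{K}$ and that $\mathrm{K}$ of a superfluous submodule is superfluous, which in turn follows because restricting back down and using superfluousness of $\rad(M)$ inside $M\otimes\Lambda\widetilde{\fg}_{-1}$ (again the tensor-product statement from the first half of the paper, in its radical form Corollary~\ref{CorRad}/Lemma~\ref{LemRade}). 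Since $\topp(M)$ has finite length and $\mathrm{K}$ sends finite-length modules to finite-length modules (each simple goes to something of finite length as noted in step one), $\topp(\mathrm{K}(M))$ has finite length; superfluousness of $\rad(\mathrm{K}(M))$ is then Remark~\ref{RemRad}\eqref{RemRad.2}. For the exact matching of lengths I would again invoke the adjunction $\Hom_{\widetilde{\fg}}(\mathrm{K}(M),L')\cong\Hom_{\fg}(M,\mathrm{Res}(L'))\otimes(\text{twist})$ and the fact that $\mathrm{K}$ of a projective cover computation pins down $\topp(\mathrm{K}(M))$ exactly as $\mathrm{K}(\topp(M))$, which for type I is where the ``$\widetilde{\fg}_1$ acts trivially'' normalization makes the argument clean.

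\textbf{Main obstacle.} The delicate point is not exactness or finite-length bookkeeping but the \emph{exact} preservation of socle/radical length rather than mere finiteness; this requires controlling how a simple $\widetilde{\fg}$-supermodule $L'$ restricts to $\fg$ and, in particular, pairing up the $\fg$-socle constituents of $\mathrm{Res}(L')$ with those of $M$ via the adjunction. In the type I setting this is governed by the Kac module structure and the fact (from \cite{CM}) that simple $\widetilde{\fg}$-supermodules correspond bijectively to simple $\fg_{\bar 0}$-supermodules, so I expect the cleanest route is to first establish the bijection $\topp(\mathrm{K}(M))\leftrightarrow\topp(M)$ and $\soc(\mathrm{K}(M))\leftrightarrow$ the socle of the dual Kac module, and only then read off equality of lengths; verifying that these correspondences are length-exact and not merely injective is the step I would allocate the most care to.
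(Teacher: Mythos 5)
Your plan correctly identifies the restriction formula $\mathrm{Res}\circ\mathrm{K}(M)\cong M\otimes\Lambda\widetilde{\fg}_{-1}$, the noetherianity bookkeeping, and the relevance of the bijection from \cite{CM} between simple $\widetilde{\fg}$- and simple $\fg$-modules. But the core of the socle argument has a genuine gap. You derive essentiality of $\soc(\mathrm{K}(M))$ from ``essentiality downstairs,'' i.e.\ from essentiality of $\soc(M\otimes\Lambda\widetilde{\fg}_{-1})$. That is precisely the hard tensor-product question Q2 which the first half of the paper can only resolve under strong hypotheses on the even part (type~$A$, low-rank $B/C$, sums of $\mathfrak{sl}_2$'s). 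Theorem~\ref{thm91}, however, is stated for an arbitrary Lie superalgebra of type~I with no restriction on $\fg=\widetilde{\fg}_{\bar 0}$ (for instance $C(n)=\mathfrak{osp}(2|2n-2)$ has $\fg_{\bar 0}$ containing $\mathfrak{sp}_{2n-2}$), so you cannot assume $M\otimes\Lambda\widetilde{\fg}_{-1}$ has essential socle. Without a replacement for that step you have no way to produce a simple $\fg$-submodule inside an arbitrary nonzero $\widetilde{\fg}$-submodule $N\subseteq\mathrm{K}(M)$.

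The missing idea is the structural fact \cite[Lemma~3.1]{CM}: because of the $\mZ$-grading on $\mathrm{K}(V)$ (with $\widetilde{\fg}_{\pm1}$ shifting degree by $\pm1$), every nonzero $\widetilde{\fg}$-submodule of $\mathrm{K}(V)$ must meet the bottom graded piece $\Lambda^{d}\widetilde{\fg}_{-1}\otimes V$, where $d=\dim\widetilde{\fg}_{-1}$. Since $\Lambda^{d}\widetilde{\fg}_{-1}$ is one-dimensional, this piece is just a character twist of $V$, hence inherits finite type socle directly from $V$ with no tensor-product theorem needed. One then observes that the $\widetilde{\fg}$-submodule generated by $\Lambda^{d}\widetilde{\fg}_{-1}\otimes\soc(V)$ is semisimple, essential and of the same length as $\soc(V)$, which proves the socle half. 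This is localized to the bottom piece and sidesteps the essentiality question for the full tensor product.

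On the radical side your route through ``$\mathrm{K}$ of a superfluous submodule is superfluous'' and superfluousness of $\rad(M)\otimes\Lambda\widetilde{\fg}_{-1}$ in $M\otimes\Lambda\widetilde{\fg}_{-1}$ is also not justified and again smuggles in tensor-product statements. The paper's route is simpler: $\mathrm{K}(M)$ is finitely generated over the noetherian ring $\widetilde{U}$, hence $\rad(\mathrm{K}(M))$ is automatically superfluous; and the equality of the lengths of the tops is read off from the adjunction $\Hom_{\widetilde{\fg}}(\mathrm{K}(M),L)\cong\Hom_{\fg}(M,L^{\widetilde{\fg}_1})$ together with the bijection $L\mapsto L^{\widetilde{\fg}_1}$ from \cite[Theorem~4.1(i)]{CM}. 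You correctly sense that this adjunction and bijection are the relevant tools, but the detour through superfluousness preservation is where the argument falls apart.
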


\begin{proof}
We start with the claim about the socle. Let $V\in \fg\text{-}\mathrm{sMod}$ be a module with finite type socle.
Set $d:=\dim \widetilde{\fg}_{-1}$. 
Any $\widetilde{\fg}$-submodule of $\mathrm{K}(V)$ intersects
$\Lambda^{d}\widetilde{\fg}_{-1}\otimes V$, see \cite[Lemma~3.1]{CM}.
The space $\Lambda^d\widetilde{\fg}_{-1}\otimes S$, with $S$ a simple submodule of 
$V$, therefore generates a simple $\widetilde{\fg}$-module of $K(V)$ and moreover intersects
$\Lambda^d\widetilde{\fg}_{-1}\otimes V$ precisely in $\Lambda^d\widetilde{\fg}_{-1}\otimes S$.
This follows from considering $K(V)$ as a $\mZ$-graded module where elements of $\fg_{\pm 1}$
act as operators in degree $\pm1$.
The submodule generated by 
$\Lambda^d\widetilde{\fg}_{-1}\otimes \soc(V)$ is thus semisimple, essential and has same length as $\soc(V)$.


We proceed with the claim about radical. By Remark~\ref{RemRad}\eqref{RemRad.1} and \eqref{RemRad.3}
it follows that $K$ sends modules with finite type radical to modules with superfluous radical. For an arbitrary $\fg$-module $M$ and $\widetilde{\fg}$-module $N$, we have
$$\Hom_{\widetilde{\fg}}(K(M),N)\;\cong\;\Hom_{\fg}(M,N^{{\widetilde{\fg}}_1})\quad\mbox{with}\quad
N^{{\widetilde{\fg}}_1}=\{v\in N\,:\, \widetilde{\fg}_1v=0\}. 
$$
The above equation and \cite[Theorem~4.1(i)]{CM} imply that $S_L:=L^{{\widetilde{\fg}}_1}$ is a simple $\fg$-module for each
simple $\widetilde{\fg}$-module $L$, and the map $L\mapsto S_L$ induces a bijection of isomorphism classes
of simple modules. For an arbitrary $\fg$-module $M$ and a simple $\widetilde{\fg}$-module $L$ we thus have
$$\Hom_{\widetilde{\fg}}(K(M),L)\;\cong\;\Hom_{\fg}(M,S_L).$$
Consequently, if $M$ has finite type radical, the length of the top of $M$ and $K(M)$ coincide.
\end{proof}

As an immediate corollary, we obtain the following. We call a Borel subalgebra $\widetilde{\fb}$ of $\widetilde{\fg}$
{\em distinguished} if it is of the form $\fb\oplus\fg_{1}$, for a Borel subalgebra $\fb$ of $\fg$.
\begin{corollary}\label{cor92}
All Verma supermodules with respect to distinguished Borel subalgebras over all Lie superalgebras of type I have simple socle. 
\end{corollary}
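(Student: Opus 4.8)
The plan is to recognise every Verma supermodule attached to a distinguished Borel $\widetilde{\fb}=\fb\oplus\widetilde{\fg}_1$ as the Kac induction $\mathrm{K}(\Delta_\lambda)$ of the ordinary Verma $\fg$-module $\Delta_\lambda$ associated with $\fb$, and then to invoke Theorem~\ref{thm91}. Here the Verma supermodule is $\widetilde{\Delta}_\lambda:=U(\widetilde{\fg})\otimes_{U(\widetilde{\fb})}\mC_\lambda$, where $\mC_\lambda$ is the one-dimensional $\widetilde{\fb}$-module on which $\fh$ acts by $\lambda$ and the nilradical $\fn^+\oplus\widetilde{\fg}_1$ acts by zero, and $\Delta_\lambda$ carries the trivial $\widetilde{\fg}_1$-action used in the definition of $\mathrm{K}$.

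First I would prove the identification $\widetilde{\Delta}_\lambda\cong\mathrm{K}(\Delta_\lambda)$. Since $\widetilde{\fb}\subseteq\fp\subseteq\widetilde{\fg}$, transitivity of induction gives $\widetilde{\Delta}_\lambda\cong\mathrm{K}\big(U(\fp)\otimes_{U(\widetilde{\fb})}\mC_\lambda\big)$, so it remains to identify the $\fp$-module $U(\fp)\otimes_{U(\widetilde{\fb})}\mC_\lambda$ with $\Delta_\lambda$ carrying the trivial $\widetilde{\fg}_1$-action. For this I would use the $\mZ$-grading $\widetilde{\fg}=\widetilde{\fg}_{-1}\oplus\widetilde{\fg}_0\oplus\widetilde{\fg}_1$ defining type~I: it induces gradings on $U(\fp)$ and $U(\widetilde{\fb})$ with $\widetilde{\fg}_1$ in degree $1$, while $\mC_\lambda$ sits in degree $0$, and PBW (using $\fp=\fn^-\oplus\widetilde{\fb}$ as vector spaces) gives $U(\fp)\cong U(\fn^-)\otimes U(\widetilde{\fb})$ as graded right $U(\widetilde{\fb})$-modules with $U(\fn^-)$ concentrated in degree $0$. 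Hence $U(\fp)\otimes_{U(\widetilde{\fb})}\mC_\lambda\cong U(\fn^-)\otimes\mC_\lambda$ is concentrated in degree $0$; its $\fg$-module structure is, as a highest weight module generated in degree $0$ by a vector of weight $\lambda$ and of the right graded dimension, that of $\Delta_\lambda$, and $\widetilde{\fg}_1$, which raises degree by one, must act by zero. This yields $\widetilde{\Delta}_\lambda\cong\mathrm{K}(\Delta_\lambda)$.

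Next I would recall the classical fact that a Verma module $\Delta_\lambda$ over the reductive Lie algebra $\fg$ has simple socle; letting the centre of $\fg$ act by scalars reduces this to the semisimple case, where it follows from the standard submodule theory of Verma modules --- nonzero homomorphisms between Verma modules are injective, $\dim\Hom_\fg(\Delta_\mu,\Delta_\lambda)\le 1$, and the linkage class of $\lambda$ has a unique antidominant weight (see, e.g., \cite{Hu08}). Thus $\Delta_\lambda$, being a finite length object of category~$\cO$ with simple socle, has finite type socle with $\soc(\Delta_\lambda)$ of length $1$.

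It then remains to apply Theorem~\ref{thm91} to $V=\Delta_\lambda$: it shows that $\mathrm{K}(\Delta_\lambda)=\widetilde{\Delta}_\lambda$ has finite type socle of the same length as $\soc(\Delta_\lambda)$, namely $1$, which is to say $\widetilde{\Delta}_\lambda$ has simple socle. The only point that calls for care is the identification $\widetilde{\Delta}_\lambda\cong\mathrm{K}(\Delta_\lambda)$, and within it the verification that $\widetilde{\fg}_1$ acts trivially on $U(\fp)\otimes_{U(\widetilde{\fb})}\mC_\lambda$; the remaining steps are either formal or references to well-known facts about category~$\cO$.
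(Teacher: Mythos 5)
Your proposal is correct and follows the same route as the paper: identify the Verma supermodule as $\mathrm{K}(\Delta_\lambda)$, recall that the ordinary Verma module $\Delta_\lambda$ has simple socle (the paper cites \cite[Proposition~7.6.3(i)]{Di} for this), and conclude via Theorem~\ref{thm91}. You merely spell out the identification $\widetilde{\Delta}_\lambda\cong\mathrm{K}(\Delta_\lambda)$ and the simple-socle fact in more detail than the paper does.
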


\begin{proof}
Verma supermodules over Lie superalgebras of type I can be obtained from
Verma modules over Lie algebras using Kac functor. Therefore the claim follows
from Theorem~\ref{thm91} and \cite[Proposition~7.6.3(i)]{Di}.
\end{proof}

\vspace{2mm}

\noindent
CC:~Department of Mathematics, Uppsala University, Box 480,
SE-75106, Uppsala, SWEDEN;
E-mail: {\tt chih-whi.chen\symbol{64}math.uu.se}
\hspace{2cm} 

\noindent
KC:~School of Mathematics and Statistics, University of Sydney, NSW 2006,\\ AUSTRALIA;
E-mail: {\tt kevin.coulembier@sydney.edu.au} 
\vspace{2mm}

\noindent
VM:~Department of Mathematics, Uppsala University, Box 480, SE-75106, Uppsala, SWEDEN;
E-mail: {\tt  mazor@math.uu.se}


\begin{thebibliography}{9999999} 

\bibitem[BB]{BB}
A.~Beilinson, I.N.~Bern\v{s}tein.
Localisation de g-modules. 
C. R. Acad. Sci. Paris S\'er. I Math. {\bf 292} (1981), no. 1, 15--18.

\bibitem[BF]{BF} 
A.~Bell, R.~Farnsteiner. 
On the theory of Frobenius extensions and its application to Lie superalgebras. 
Trans. Amer. Math. Soc. {\bf 335} (1993), no. 1, 407--424.


\bibitem[BG]{BG}
J.~Bernstein, S.~Gelfand.
Tensor products of finite- and infinite-dimensional representations of semisimple Lie algebras.
Compositio Math. {\bf 41} (1980), no. 2, 245--285.

\bibitem[BGG]{BGG76} I.~Bernstein, I.~Gelfand, S.~Gelfand. 
A certain category of $\mathfrak{g}$-supermodules. 
Funkcional. Anal. i Prilozen. {\bf 10} (1976), 1--8.

\bibitem[Br]{Br}
J.~Brundan. 
Kazhdan-Lusztig polynomials and character formulae for the Lie superalgebra $\mathfrak{q}(n)$. 
Adv. Math. {\bf 182} (2004), no. 1, 28--77.


\bibitem[BD]{BD}
J.~Brundan, N.~Davidson. 
Type~A blocks of super category $\mathcal{O}$. 
J. Algebra {\bf 473} (2017), 447–480. 



\bibitem[BK]{Kashiwara}
J.~Brylinski, M.~Kashiwara.
Kazhdan-Lusztig conjecture and holonomic systems. 
Invent. Math. {\bf 64} (1981), no. 3, 387--410.


\bibitem[CC]{CC} 
C.-W.~Chen,  K.~Coulembier.
The primitive spectrum and category $\mathcal{O}$ for the periplectic Lie superalgebra. 
Preprint arXiv:1805.08885.


\bibitem[CM]{CM}
C.-W.~Chen, V.~Mazorchuk.
Simple supermodules over Lie superalgebras.
Preprint arXiv:1801.00654

\bibitem[CMW]{ChMaWa13}
S.-J.~Cheng, V.~Mazorchuk, W.~Wang.
Equivalence of blocks for the general linear Lie superalgebra.
Lett. Math. Phys. {\bf 103} (2013), no. 12, 1313--1327.

\bibitem[CW]{ChWa}
S.-J.~Cheng, W.~Wang.
Dualities and representations of Lie superalgebras.
Graduate Studies in Mathematics, {\bf 144}. American Mathematical Society, Providence, RI, 2012. 

\bibitem[CW2]{ChWa02}
S.-J.~Cheng, W.~Wang.
Character formulae in category $\mathcal O$ for exceptional Lie superalgebras $D(2|1;\zeta)$.
Preprint arXiv:1704.00846

\bibitem[Co]{Co16} 
K.~Coulembier. 
The Primitive Spectrum of a Basic  Classical Lie Superalgebra. 
Comm. Math. Phys. {\bf 348.2} (2016): 579--602.


\bibitem[DMP]{DMP}
I.~Dimitrov, O.~Mathieu, I.~Penkov.
On the structure of weight modules. 
Trans. Amer. Math. Soc. {\bf 352} (2000), no. 6, 2857--2869.

\bibitem[Di]{Di}
J.~Dixmier. 
Enveloping algebras. Graduate Studies in Mathematics, 11. 
American Mathematical Society, Providence, RI, 1996.

\bibitem[Du]{Duflo}
M.~Duflo. 
Sur la classification des id{\'e}aux primitifs dans l'alg\`ebre enveloppante 
d'une alg\`ebre de Lie semi-simple. 
Ann. of Math. (2) {\bf 105} (1977), no. 1, 107--120. 


\bibitem[FK]{FK} 
P.~Freund, I.~Kaplansky, 
Simple supersymmetries. 
J.~Math.~Phys. {\bf 17} (1976) 228-–231.

\bibitem[Fr]{Fr} 
A.~Frisk. 
Typical blocks of the category $\mathcal{O}$ for the queer Lie superalgebra.
J. Algebra Appl. {\bf 6} (2007), no. 5, 731--778.


\bibitem[FM]{FM} 
A.~Frisk, V.~Mazorchuk. 
Volodymyr Regular strongly typical blocks of $\mathcal{O}_{\mathfrak{q}}$. 
Comm. Math. Phys. {\bf 291} (2009), no. 2, 533--542. 



\bibitem[Ge]{Geck}
M.~Geck.
Kazhdan-Lusztig cells and the Murphy basis. 
Proc. London Math. Soc. (3) {\bf 93} (2006), no. 3, 635--665.




\bibitem[Go1]{Go} 
M.~Gorelik. 
On the ghost centre of Lie superalgebra. 
Ann. Inst. Fourier (Grenoble) {\bf 50} (2000), no. 6, 1745--1764. 

\bibitem[Go2]{Go02} 
M.~Gorelik. 
Annihilation theorem and separation theorem for  basic classical Lie superalgebras. 
 J. Am. Math. Soc.  {\bf 15} (2002), no. 1, 113--165.



\bibitem[GM]{GM}
J.~Greenstein, V.~Mazorchuk. 
Koszul duality for semidirect products and generalized Takiff algebras.
Algebr. Represent. Theory {\bf 20.3} (2017): 675--694.


\bibitem[Hu]{Hu08} 
J.~Humphreys. 
Representations of semisimple Lie algebras in the BGG category $\mathcal{O}$. 
Graduate Studies in Mathematics, vol. {\bf 94}, American Mathematical Society, Providence, RI, (2008).

\bibitem[Ir]{Irving}
R.~Irving.
A filtered category $\cO_S$ and applications. 
Mem. Amer. Math. Soc. {\bf 83} (1990), no. 419.

\bibitem[IS]{IrSh88}
R.~Irving, B.~Shelton. 
Loewy series and simple projective modules in the category ${\mathscr O}\sb S$.  
Pacific J. Math.  {\bf 132} (1988),  no. 2, 319--342.

\bibitem[Ja]{Jantzen}
J.~Jantzen.
Einh\"ullende Algebren halbeinfacher Lie-Algebren.
Ergebnisse der Mathematik und ihrer Grenzgebiete (3), {\bf 3}. Springer-Verlag, Berlin, 1983.

\bibitem[Jo]{Jo80}
A.~Joseph. 
Kostant's problem, Goldie rank and the Gelfand-Kirillov conjecture.  
Invent. Math.  {\bf 56}  (1980), no. 3, 191--213.




\bibitem[KaL]{KL}
D.~Kazhdan, G.~Lusztig.
Representations of Coxeter groups and Hecke algebras. 
Invent. Math. {\bf 53} (1979), no. 2, 165--184.


\bibitem[KM]{KhMa04}
O.~Khomenko, V.~Mazorchuk. 
Structure of modules induced from simple modules with minimal annihilator. 
Canad. J. Math. {\bf 56} (2004), no. 2, 293--309.

\bibitem[Kn]{Knapp}
A.W.~Knapp,
Lie groups, Lie algebras, and cohomology. 
Mathematical Notes, {\bf34}. Princeton University Press, Princeton, NJ, 1988.

\bibitem[Ko]{Kostant}
B.~Kostant.
On the tensor product of a finite and an infinite dimensional representation. 
J. Functional Analysis {\bf 20} (1975), no. 4, 257--285.

\bibitem[KrL]{KrLe}
G.~Krause, T.~Lenagan.
Growth of algebras and Gelfand-Kirillov dimension.
Graduate Studies in Mathematics {\bf 22}. American Mathematical Society, Providence, RI, 2000.


\bibitem[MMMZ]{MMMZ}
M.~Mackaay, V.~Mazorchuk, V.~Miemietz, X.~Zhang.
Analogues of centralizer subalgebras for fiat 2-categories and their 2-representations.
Preprint arXiv:1802.02078.

\bibitem[Ma]{Ma}
V.~Mazorchuk.
Parabolic category $\mathcal{O}$ for classical Lie superalgebras. 
Advances in Lie superalgebras, 149--166, Springer INdAM Ser., {\bf 7}, Springer, Cham, 2014. 

\bibitem[MM]{MaMe12}
V.~Mazorchuk, V.~Miemietz. 
Serre functors for Lie algebras and superalgebras. 
Ann. Inst. Fourier {\bf 62} (2012) no. 1, 47--75.


\bibitem[MaS]{MaSt08} 
V.~Mazorchuk, C.~Stroppel. 
Categorification of  (induced) cell modules and the rough structure of generalised  Verma modules. 
Adv. Math. {\bf 219} (2008), no. 4, 1363--1426. 

\bibitem[MiS]{MSo}
D.~Mili\v{c}i\'c, W.~Soergel:
The composition series of modules induced from Whittaker modules.
Comment. Math. Helv. {\bf 72} (1997), no. 4, 503--520.


\bibitem[Mu]{Musson}
I.~Musson.
Lie superalgebras and enveloping algebras. 
Graduate Studies in Mathematics {\bf 131}.  American Mathematical Society, Providence, RI, 2012.


\bibitem[St]{Stafford}
J.~Stafford.
Nonholonomic modules over Weyl algebras and enveloping algebras. 
Invent. Math. {\bf 79} (1985), no. 3, 619--638. 

\bibitem[So]{So}
W.~Soergel. 
The combinatorics of Harish-Chandra bimodules. 
J. Reine Angew. Math. {\bf 429} (1992), 49--74.


\end{thebibliography}
\end{document}